\documentclass[12pt,reqno,oneside]{amsart}
\makeindex
\usepackage{amsmath,amsthm,amssymb}
\usepackage{graphicx,xcolor}
\usepackage{float} %allow to insert images where they should be

\usepackage{etoolbox}
\numberwithin{figure}{section}
\numberwithin{equation}{section}
 
\DeclareFontFamily{U}{mathb}{\hyphenchar\font45}
\DeclareFontShape{U}{mathb}{m}{n}{
	<-6> mathb5 <6-7> mathb6 <7-8> mathb7
	<8-9> mathb8 <9-10> mathb9
	<10-12> mathb10 <12-> mathb12
}{}
\DeclareSymbolFont{mathb}{U}{mathb}{m}{n}
\DeclareMathSymbol{\llcurly}{\mathrel}{mathb}{"CE}
\DeclareMathSymbol{\ggcurly}{\mathrel}{mathb}{"CF}
\definecolor{Red}{cmyk}{0,1,1,0}

\definecolor{Blue}{cmyk}{1,1,0,0}

\theoremstyle{plain}
\newtheorem{maintheorem}{Theorem} 

\newtheorem{maincorollary}[maintheorem]{Corollary}
\newtheorem{theorem}{Theorem }[section]
\newtheorem{proposition}[theorem]{Proposition}
\newtheorem{lemma}[theorem]{Lemma}
\newtheorem{corollary}[theorem]{Corollary}

\theoremstyle{definition} \theoremstyle{remark}
\newtheorem{remark}[theorem]{Remark}
\newtheorem{example}[theorem]{Example}
\newtheorem{definition}[theorem]{Definition}

\newcommand{\vep}{\varepsilon}

\newcommand{\diam}{\operatorname{diam}}
\newcommand{\dist}{\operatorname{dist}}

\newcommand{\interior}{\operatorname{int}}

\renewcommand{\ge}{\geqslant}
\renewcommand{\geq}{\geqslant}
\renewcommand{\leq}{\leqslant}
\renewcommand{\le}{\leqslant}

\usepackage{mathrsfs}
\usepackage[colorlinks=true,linkcolor=blue, urlcolor=red, citecolor=blue]{hyperref}	% Links in the pdf. Backref option: each 

\begin{document}
\pagestyle{myheadings}

\title[Stability Theory for Local Iterated Function Systems]{Stability Theory for Local Iterated Function Systems}

\author{Elismar R. Oliveira and Paulo Varandas}

\address{Elismar R. Oliveira,  Departamento de Matem\'atica, Universidade Federal do Rio grande do Sul\\
Porto Alegre, Brazil}
\email{elismar.oliveira@ufrgs.br}

\address{Paulo Varandas, Departamento de Matem\'atica, Universidade Federal da Bahia\\
Av. Ademar de Barros s/n, 40170-110 Salvador, Brazil \& Center for Research and Development in Mathematics and Applications (CIDMA), Department of Mathematics, University of Aveiro, 
3810--193 Aveiro, Portugal
}
\email{paulo.varandas@ua.pt}

\date{\today}

\begin{abstract}
We develop a stability theory for contractive local IFSs on compact metric spaces. Unlike the classical global setting, local systems may exhibit a richer symbolic and geometric structure, including code spaces that are not of finite type and attractors with endpoints, leading to new mechanisms of instability.
We first prove that concordant shadowing implies upper semicontinuity of the local attractor and persistence of the code space, yielding a criterion for combinatorial stability under perturbations. Under the open set condition, we establish a strong form of topological stability for combinatorially stable contractive local systems, and prove the converse implication on compact manifolds of dimension at least three.
In particular, we show that contractive graph-directed IFSs are topologically stable. We also construct contractive local IFSs derived from $\beta$-transformations that are combinatorially unstable. These results show that stability in the local setting is governed by the interplay between contraction and the combinatorial rigidity of the code space. Applications to graph-directed IFSs and pseudogroup actions are also given.
\color{black}
\end{abstract}
%\tableofcontents

\maketitle

\section{Introduction}
\label{sec:intro}

The theory of Iterated Function Systems (IFSs) is a central component of modern fractal geometry and dynamical systems. In its classical form, an IFS consists of a compact metric space $X$ together with a finite family of continuous maps $(f_i)_{1 \le i \le n}$ acting on $X$. The associated Hutchinson--Barnsley operator $F:2^X \to 2^X$,
\[
F(B)=\bigcup_{1\le j \le n} f_j(B),
\]
is continuous with respect to the Hausdorff metric and, under uniform contractivity, admits a unique non-empty compact invariant set $\Lambda \subset X$ satisfying $F(\Lambda)=\Lambda$. In this setting, $\Lambda$ attracts every compact subset of $X$, and the dynamics is encoded by the full shift on $\{1,\dots,n\}^{\mathbb{N}}$ (see e.g. \cite{Bar06,Hut81}). Under the open set condition (OSC), each point of $\Lambda$ admits a unique coding by an itinerary in $\{1,\dots,n\}^{-\mathbb{N}}$. Moreover, classical IFSs have rich dynamical and geometric features, very well studied in the past decades, and the attractor of the IFS possesses a fractal structure whose Hausdorff dimension can be computed as a zero of a certain pressure function (see e.g. \cite{Fal94,Fal03,MU96}). 

Local Iterated Function Systems (local IFSs), introduced by Barnsley~\cite{Bar86}, replace global maps by maps $f_j:X_j \to X$ defined on closed subsets $X_j \subset X$. Writing $R_{\mathfrak X}=(X_j,f_j)_{1\le j\le n}$, one associates the operator
\[
F_{\mathfrak X}(B)=\bigcup_{1\le j \le n} f_j(B \cap X_j), \quad B\in K^*(X).
\]
In contrast with the classical case, $F_{\mathfrak X}$ may fail to be continuous or contractive, compositions of maps may not be globally admissible, and symbolic descriptions may break down. These features, already noted in early work (see, e.g., \cite{Fis95}), reflect the intrinsically local nature of the system and the absence of a general fixed-point principle.

\smallskip
A systematic framework for local IFSs was recently initiated in \cite{OV25a}. Even in the contractive case, the dynamics departs significantly from the classical theory: $F_{\mathfrak X}$ need not be contracting, invariant sets need not be unique, and attractors may contain points with finite orbits (endpoints) \cite{OV25a}. Moreover, the symbolic dynamics is governed by admissible compositions and typically corresponds to a proper invariant subset of the full shift, not necessarily of finite type. In particular, local IFSs may produce attractors that are not self-similar.
These phenomena reveal a richer combinatorial and geometric structure, and call for a corresponding notion of stability. In the classical setting, hyperbolicity entails semicontinuous dependence of the attractor on the defining maps and yields topological stability through shadowing-type properties. In fact the notion of concordant shadowing, which may be viewed as an analogue of shadowing for non-autonomous dynamical systems, has been shown to hold for both contractive IFSs,  surjective expansive IFSs and open classes of hyperbolic IFSs (cf. \cite{AT25,CRV19,GV06}). These results support the principle that uniform hyperbolicity provides a sufficient condition for the concordant shadowing property.
Extending this theory to local IFSs faces two fundamental obstacles: the symbolic space may change discontinuously under perturbations, and small changes in the maps or domains may alter admissibility, create or destroy endpoints, and reorganize the attractor.

\smallskip
In this paper we develop a stability theory for contractive local IFSs on compact metric spaces. Our approach builds on the orbit and coding structures introduced in \cite{OV25a} and addresses the interaction between admissibility, symbolic dynamics, and geometry.
Our first main result shows that concordant shadowing implies upper semicontinuity of the local attractor (Theorem~\ref{thm:shadowing-usc}) and that its robust occurrence yields persistence of the symbolic structure, leading to a criterion for combinatorial stability (see Corollary~\ref{thm:robust-concordantshadowing}). 
Upper semicontinuity of the local attractor holds, in general, for contractive local IFSs when viewed as a function of the domains of the maps (see Theorem~\ref{thm:usc} for a precise statement).
Under the OSC, we prove that combinatorial stability implies a strong form of topological stability, and we establish a converse on compact manifolds of dimension at least three. In particular, within this class, concordant shadowing and topological stability are equivalent.
These results exhibit a new stability mechanism, governed not only by hyperbolicity but also by the rigidity of the admissible symbolic structure. 
At the level of the code space of local attractors, this is related to a classical theorem of Walters \cite{Wa78}, which characterizes subshifts of finite type as the class of subshifts that satisfy the shadowing property (see also Good and Meddaugh~\cite{GM20} for a generalization to more general compact spaces). 
In fact, in contrast with the context of IFSs, contractivity alone does not guarantee concordant shadowing, and even systems satisfying the OSC may fail to exhibit it (see Theorem~\ref{thm:beta}). In particular, the principle that hyperbolicity is sufficient to ensure the concordant shadowing property fails in the setting of local IFSs (see e.g. Example~\ref{ex:shift2}).

In this direction, we provide a sharp characterization of stability phenomena for contractive local IFSs under the open set condition (Theorems ~\ref{thm:combinatorial-topological} and \ref{thm:concordantshadowing-implies-stable}). On the one hand, Theorem ~\ref{thm:combinatorial-topological} shows that combinatorial stability, together with contraction, suffices to guarantee topological stability, thus extending classical hyperbolic principles to the local setting in the presence of symbolic rigidity. On the other hand, Theorem \ref{thm:concordantshadowing-implies-stable} establishes a partial converse in the context of manifolds, proving that topological stability together with combinatorial stability forces the concordant shadowing property. In particular, when combined, these results yield an equivalence between topological stability and concordant shadowing within this class, revealing that stability in local IFSs is not solely dictated by contraction, but rather by a delicate balance between geometric hyperbolicity and the persistence of the admissible symbolic structure. As an immediate consequence that illustrate the scope of our results, the former provides a complete characterization of topological stability within the class of combinatorially stable contractive local IFSs on compact manifolds of dimension larger or equal than three (Corollary
~\ref{cor:stable-iff-concordantshadowing}), 
shows that contractive graph-directed IFSs inherit both combinatorial and topological stability (Corollary~\ref{cor:graphdirected}), and yields applications to the context of pseudogroup actions (see Subsection~\ref{subsec:pseudo}). 

The paper is organized as follows. In Section~\ref{sec:mainresults} we introduce the general setting of local IFSs and state our main results. In Section~\ref{sec:prelim} we collect and prove preliminary results, including the description of the code space and the space of orbits, and basic notions of stability and shadowing for local IFSs. Section~\ref{sec:usc} is devoted to the analysis of concordant shadowing and its consequences for the upper semicontinuity of the local attractor, as well as its relation with combinatorial stability.
Section~\ref{sec:shadows} is devoted to  quantitative analysis of the concordant shadowing property, with refined key estimates on shadowing points and pseudo-orbits, and characterization of admissible pseudo-orbits. 
In Section~\ref{sec:local-beta} we construct first examples of contractive local IFSs, derived from $\beta$-transformations, which fail to satisfy the concordant shadowing property and exhibit combinatorial and topological instability. Section~\ref{sec:contractive-stable} is devoted to the discussion of the topological stability for contractive local IFSs and its relation with combinatorial stability. In particular we proved that contractive IFSs are topologically stable, a fact that we could not find in the literature. 
Finally, in Section~\ref{sec:examples} we present further examples illustrating both stability and instability phenomena, including systems with rich symbolic behavior, and discuss applications to pseudogroup actions.

\color{black}

\section{Main results}\label{sec:mainresults}

This section is devoted to the statement of our main results. First we recall the general setting, and introduce the notions of concordant shadowing, combinatorial and topological stability. 

\subsection{Local IFSs}
The concept of local iterated function systems on arbitrary topological spaces which can be traced back to \cite{BH93,Fis95}. 
Let $(X,d)$ be a compact metric space and let $ K^*({X})$ denote the hyperspace of compact subsets of $X$.  

\begin{definition}\label{def:local IFS}
   A \emph{local IFS} 
   \index{Local IFS} 
   on $X$ is a structure $R_{\mathfrak X}=(X_j, f_{j})_{1\le j \le n}$ where 
  $n \geq 2$ is an integer and, for each $1\le j \le n$, one has that $X_j \in K^*({X})$ is a closed subset of $X$, and $f_{j}: X_{j} \to X$ is a continuous map. We shall refer to the family $\mathfrak X=(X_j)_{1\le j \le n}$ as the set of \emph{restrictions}\index{Restriction! For a local IFS} 
  of the local IFS.
  Moreover, we say that the local IFS
    $R_{\mathfrak X}$ is \emph{contractive} \index{Contractive local IFS} if there exists $\lambda_j\in [0,1)$ so that 
    $ 
    d(f_j(x), f_j(y))\le \lambda_j d(x,y) 
    $ 
    for every $x,y\in X_j.$
\end{definition}

Given  $R_{\mathfrak X}=(X_j, f_{j})_{1\le j \le n}$, $k\ge 1$ and $\underline a =(a_0,a_1,\ldots) \in \{1,2,,\dots, n\}^{\mathbb N}$ we denote the compositions of maps in  $R_{\mathfrak X}$ by 
\[f_{\underline a}^k:=f_{a_{k-1}}\circ f_{a_{k-2}}\circ \cdots\circ f_{a_{0}},\] 
whenever the latter makes sense.

   The \emph{local Hutchinson-Barnsley operator}\index{Local Hutchinson-Barnsley operator} $F_{\mathfrak X}:  K^*({X}) \to K^*(X) $ is defined as
\begin{equation}
    \label{def:loc fract operat}
    F_{\mathfrak X}(B)=\bigcup_{1\le j \le n} f_{j}(B \cap X_{j}),
\end{equation}
for each $B \in K^*(X)$. 
While the previous operator need not be a contraction, there exists a unique compact $F_{\mathfrak X}$-invariant  subset  $A_{\mathfrak X}\subseteq X$ which is maximal with respect to the inclusion and, if non-empty, it is referred to as the \emph{local attractor} for the local IFS  $R_{\mathfrak X}$ (cf. \cite[Theorem~A]{OV25a}). Local attractors may contain endpoints and points with infinite orbits. In this way, it is also natural to consider the following natural subsets 
\begin{equation}
    \label{eq:inclusionsA}
A^\infty_{\mathfrak X}
    \subseteq 
    A_{\mathfrak X}
    \subseteq 
    A_R
\end{equation}
where $A_R$ denotes the attractor for the IFS $R=(X,f_i)_{1\le i \le n}$ and
\begin{equation}
    \label{eq:maxinvA}
    A^{\infty}_{\mathfrak X} =\Big\{x\in A_{\mathfrak X} \colon \exists (a_j)_{j\ge 0} \text{ so that }  (f_{a_j}\circ \dots \circ f_{a_0})(x) \in \bigcup_{i=1}^n X_i, \, \text{for every}\, j\ge 0\Big\}.
\end{equation}
stands for the set of points in the local attractor which have an infinite orbit. Moreover, while the code space of the local attractor $A_{\mathfrak X}$ may be a proper invariant subset of $\Sigma^-=\{1,2\dots, n\}^{-\mathbb N}$, in the classical framework of IFSs the code space is the full shift $\Sigma^-$ and the attractor is the maximal invariant set, meaning that one has equalities in \eqref{eq:inclusionsA}. 
We refer the reader to Section~\ref{sec:prelim} for more details.

%%%%%%%%%%%%%%%%%%%%%%%%%%%%%%%%%%%%%%%%%%%
\subsection{Concordant shadowing  and applications}

In this subsection we adapt the notions of transitivity and shadowing for local IFSs, which will allow one to compare approximate trajectories with genuine orbits and to detect robust dynamical behavior under perturbations.

\begin{definition}\label{def:transitive}
Let $R_{\mathfrak X}=(X_j,f_j)_{1\le j\le n}$ be a local IFS.  
We say that the local attractor $A_{\mathfrak X}$ is \emph{transitive} if for any pair of nonempty open sets $U,V\subset A_{\mathfrak X}$ there exist $k\ge 1$ and indices $a_1,\dots,a_k\in\{1,\dots,n\}$ such that
\[
    (f_{a_k}\circ\cdots\circ f_{a_1})(U)\cap V\neq\emptyset.
\]
\end{definition}

\begin{definition}\label{def:pseudo-orbit}
Let $R_{\mathfrak X}=(X_j,f_j)_{1\le j\le n}$ be a local IFS. Given $\underline{a} \in \Sigma^+$ and $\delta>0$, a sequence $(x_k)_{k\ge 0}$ in $X$ is called a \emph{$(\underline{a},\delta)$-pseudo-orbit} if $x_k \in X_{a_{k}}$ and $d\bigl(f_{a_k}(x_k),\,x_{k+1}\bigr)<\delta$ for each $k\ge 0$. 
\end{definition}
It is evident from the definition that if $A_{\mathfrak X}$ denotes the local attractor and if the points $x_k$ in the pseudo-orbit belong to $A_{\mathfrak X}$  then $f_{a_k}(x_k) \in A_{\mathfrak X} $, for all $k \geq 0$.

\begin{definition}
\label{def:shadowing}
A local IFS $R_{\mathfrak X}$ satisfies the \emph{shadowing property} if for every $\vep>0$ there exists $\delta>0$ such that for every $( {\underline{a}},\delta)$-pseudo-orbit $(x_k)$ in $A_{\mathfrak X}$ there exist a point $x\in A_{\mathfrak X}$ and a sequence $\tilde {\underline{a}}\in \Sigma^+$ for which
\[
    d\bigl(f_{\tilde {\underline{a}}}^k(x),\,x_k\bigr)<\vep
    \quad\text{for all } k\ge 1.
\]
In this case we say that the $( {\underline{a}},\delta)$-pseudo-orbit $(x_k)$ is   \emph{$(\tilde {\underline{a}},\vep)$-shadowed} by $x$.
\end{definition}

As defined above, while the shadowing property allows to construct genuine orbits from approximate trajectories it does not allow for such a process to be obtained under the same compositions of maps that generated the approximate pseudo-orbit. This motivates the following notion.

\begin{definition}
\label{def:concordant-shadowing}
We say that a local IFS $R_{\mathfrak X}$ satisfies the \emph{concordant shadowing property} if for every $\vep>0$ there exists $\delta>0$ such that every $(\underline{a},\delta)$-pseudo-orbit $(x_k)$ in $A_{\mathfrak X}$ is $(\underline{a},\vep)$-shadowed by some $x\in A_{\mathfrak X}$. 
\end{definition}

The concordant shadowing property appears as a natural condition in the stability of the local attractors, namely to address the dependence of the local attractor
$A_{\mathfrak X}$ with respect to the maps and their domains. 
First, let us recall that, given a compact metric space $(X,d)$, a set function $F: K^*(X) \to K^*(X)$ is \emph{upper semicontinuous}  \index{Upper semicontinuous} at $B\in K^*(X)$ if for every open set $V \subseteq X$ such that
$ 
F(B) \subseteq V,
$
there exists an open neighborhood $U$ of $B$ in $K^*(X)$  such that
\[
F(A) \subseteq V \; \text{for all }  A\in U.
\]
In other words, for every open set $V  \subset X$ containing $F(B)$ there exists $\delta>0$ such that $F(B') \subset V$ for every $B' \in K^*(X)$ so that $dist_H(B',B)<\delta$. 
Our first main result ensures that the concordant shadowing property guarantees the
upper-semicontinuity of the local attractor. More precisely:

\begin{maintheorem}
    \label{thm:shadowing-usc}
    Let $(X,d)$ be a compact metric space and assume that the local contractive IFS $R_{\mathfrak X}= \left(X_j, f_j \right)_{1\le j \le n}$ satisfies the  concordant shadowing property. Then there exists an open neighborhood $\mathcal V$ of
    $R_{\mathfrak X}$ such that  
    $\Sigma_{\mathfrak Y}^-\subset \Sigma_{\mathfrak X}^-$
    for every $R_{\mathfrak Y} \in \mathcal V$.
    In particular the local attractor map
$$\mathcal V\ni R_{\mathfrak Y} \mapsto A_{\mathfrak Y}
$$
is upper-semicontinuous at $R_{\mathfrak X}$.
\end{maintheorem}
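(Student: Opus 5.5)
We need to understand what $\Sigma^-_{\mathfrak X}$ is: the code space of the local attractor, i.e. the set of sequences $(\dots,a_{-2},a_{-1})$ for which the nested compositions
\[
f_{a_{-1}}\circ\cdots\circ f_{a_{-k}}(X_{a_{-k}}\cap\cdots)
\]
are admissible for all $k$. Their intersection gives a point of $A_{\mathfrak X}$. The neighborhood $\mathcal V$ consists of local IFSs $R_{\mathfrak Y}=(Y_j,g_j)$ with $d_H(Y_j,X_j)<\eta$ and $\sup_{Y_j\cap X_j\text{-near}} d(g_j,f_j)<\eta$, in the topology the paper uses. The plan is to show that any admissible code for $R_{\mathfrak Y}$ is admissible for $R_{\mathfrak X}$ by turning genuine $\mathfrak Y$-orbits into $\mathfrak X$-pseudo-orbits and applying concordant shadowing.

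Let $\underline a\in\Sigma^-_{\mathfrak Y}$. By compactness it suffices, since $\Sigma^-_{\mathfrak X}$ is closed and shift-invariant, to show that every finite word $a_{-k}\dots a_{-1}$ appearing in $\underline a$ is admissible for $R_{\mathfrak X}$ with a point of $A_{\mathfrak X}$. One realizes each finite word by extending it backwards (codes are left-infinite).

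Take a $\mathfrak Y$-orbit $y_0,\dots,y_k$ in $A_{\mathfrak Y}$, with $y_i\in Y_{a_i}$ and $y_{i+1}=g_{a_i}(y_i)$. The difficulty is that concordant shadowing requires a pseudo-orbit inside $A_{\mathfrak X}$ with $x_i\in X_{a_i}$. First we show the preliminary fact: for $\eta$ small, $A_{\mathfrak Y}$ lies in a small neighbourhood of $A_{\mathfrak X}$. Then we replace each $y_i$ by a nearby point $x_i\in A_{\mathfrak X}\cap X_{a_i}$, which uses closeness of domains, and get $d(f_{a_i}(x_i),x_{i+1})\le \lambda d(x_i,y_i)+\eta+d(y_{i+1},x_{i+1})<\delta$. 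This preliminary fact is circular with the conclusion, so I would instead work with the bi-infinite/backward orbit structure. Points of $A_{\mathfrak Y}$ have arbitrarily long admissible backward histories, and by contraction a long backward history forces the point to lie within $C\lambda^m$ of an $\mathfrak X$-pseudo-orbit starting anywhere in $A_{\mathfrak X}$. I would choose an arbitrary start $x_0\in A_{\mathfrak X}$ and then use a "$\delta$-pseudo-orbit for $R$ approximating $A_R$" argument together with the closedness of $X_j$ to get the points near $A_{\mathfrak X}$.

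The main obstacle is exactly the localization of $A_{\mathfrak Y}$ near $A_{\mathfrak X}$ together with the membership $x_i\in X_{a_i}$. The latter fails in general if $Y_j$ pokes outside $X_j$, which is where endpoints appear. I expect to resolve this by taking pseudo-orbits in $A_{\mathfrak X}$ in a weakened sense, letting $\delta$ absorb the $\eta$-error, and invoking the concordant shadowing's $(\underline a,\varepsilon)$ conclusion to produce a genuine point $x\in A_{\mathfrak X}$ whose orbit follows $\underline a$ exactly. This yields admissibility of each word, hence $\Sigma^-_{\mathfrak Y}\subset\Sigma^-_{\mathfrak X}$.

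Upper semicontinuity then follows. Each $y\in A_{\mathfrak Y}$ is the image of a code $\underline a\in\Sigma^-_{\mathfrak Y}\subset\Sigma^-_{\mathfrak X}$. The corresponding $\mathfrak X$-point $\pi_{\mathfrak X}(\underline a)$ is $\varepsilon$-close to $y$, because the first $m$ compositions of $g$ and $f$ differ by at most $m\eta$ while the tails contribute at most $\lambda^m\operatorname{diam}X$. Choosing $m$ and then $\eta$ places $A_{\mathfrak Y}$ inside any prescribed neighborhood $V$ of $A_{\mathfrak X}$.
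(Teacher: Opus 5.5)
Your proposal has a genuine gap, and it sits exactly where the statement itself breaks. The decisive step is never carried out. After noting that concordant shadowing must be applied to a suitable pseudo-orbit of $R_{\mathfrak X}$, you end with ``I expect to resolve this\ldots''. So nothing in the proposal shows that a word $a_{-k}\dots a_{-1}$ of a code in $\Sigma^-_{\mathfrak Y}$ is admissible for $R_{\mathfrak X}$.

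The obstacles you isolate are also not the real ones. Membership $x_i\in X_{a_i}$ never fails: $\dist_H(X_j,Y_j)<\eta$ always gives a point of $X_{a_i}$ within $\eta$ of $y_i$. Skorohod-closeness then turns the $\mathfrak Y$-orbit into an $(\underline c,5\eta)$-pseudo-orbit of $R_{\mathfrak X}$ (Proposition~\ref{prop.key}). Localizing $A_{\mathfrak Y}$ near $A_{\mathfrak X}$ is not circular either. One has $A_{\mathfrak X}=\bigcap_k F^k_{\mathfrak X}(X)$, and limits of $k$-step orbit segments of systems $R_{\mathfrak Y_m}\to R_{\mathfrak X}$ are $k$-step orbit segments of $R_{\mathfrak X}$. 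A compactness argument as in the proof of Theorem~\ref{thm:usc} then shows that limit points of $A_{\mathfrak Y_m}\cap Y^m_j$ lie in $A_{\mathfrak X}\cap X_j$, which lets you place the pseudo-orbit inside $A_{\mathfrak X}$.

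The obstruction you miss is this: concordant shadowing only speaks about \emph{infinite} $(\underline c,\delta)$-pseudo-orbits. The orbit segment realizing your word ends by applying $g_{a_{-1}}$, landing near $\pi_{\mathfrak Y}(\underline a)$. That point may be an endpoint of $A_{\mathfrak Y}$ at distance $\ge\delta$ from every domain. Such a segment extends to no infinite pseudo-orbit, so concordant shadowing says nothing about its last symbol.

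This cannot be repaired, because the inclusion $\Sigma^-_{\mathfrak Y}\subset\Sigma^-_{\mathfrak X}$ is false in the stated generality. Here is a counterexample.
\begin{itemize}
\item Let $X=[0,6]$, $f_1(x)=x/3$ and $f_3(x)=(x+2)/3$ on $X_1=X_3=[0,1]$, and $f_2(x)=5+x/3$ on $X_2=[0,2/9]$. Then $A_{\mathfrak X}=C\cup f_2(C\cap X_2)$, with $C$ the middle-thirds Cantor set. The images $f_j(X_j)$ are pairwise disjoint, so codes are even unique.
\item $R_{\mathfrak X}$ has concordant shadowing. For $\delta<1$ no $(\underline c,\delta)$-pseudo-orbit can use the symbol $2$, since $f_2(X_2)$ is at distance $\ge 4$ from all domains. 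For $\underline c\in\{1,3\}^{\mathbb N}$, the point $x_0$ itself $(\underline c,\vep)$-shadows once $\delta<\tfrac{2}{3}\vep$, by Lemma~\ref{lem:L1}.
\item Now replace $X_2$ by $Y_2=[0,2/9+\eta]$, keeping the maps. Taking $\zeta$ a linear rescaling gives $D(R_{\mathfrak X},R_{\mathfrak Y})\le 7\eta/3$, with $R_{\mathfrak Y}$ still in $\mathcal F_C$.
\item Take $q=2/9+2\cdot 3^{-m}\in C$ with $2\cdot 3^{-m}\le\eta$, and let $w\in\{1,3\}^{-\mathbb N}$ be its code. Then $w\ast 2\in\Sigma^-_{\mathfrak Y}$ because $q\in Y_2$. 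But $w\ast 2\notin\Sigma^-_{\mathfrak X}$, because $V_{[w]_k}=[q,q+3^{-k}]$ misses $X_2$ for $k\ge m$.
\end{itemize}

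The paper's own argument glosses over the same point: it applies concordant shadowing to the finite orbit $z_k\mapsto\cdots\mapsto y_k$ without extending it to an infinite pseudo-orbit.

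What your strategy, combined with the compactness step above, does prove is the inclusion for codes of points with an infinite forward orbit, $\pi_{\Sigma^-}(\widehat\Sigma_{\mathfrak Y})\subset\Sigma^-_{\mathfrak X}$. There the segment continues along a genuine infinite $\mathfrak Y$-orbit inside $A_{\mathfrak Y}$. The upper semicontinuity of $R_{\mathfrak Y}\mapsto A_{\mathfrak Y}$ at $R_{\mathfrak X}$ is true. You should derive it directly from $A_{\mathfrak Y}\subset F^k_{\mathfrak Y}(X)$ and the compactness argument, which needs neither concordant shadowing nor the code inclusion, rather than from the inclusion as in your last paragraph.
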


It is important to emphasize that the shadowing property is not necessary for the conclusion of Theorem~\ref{thm:shadowing-usc}. Indeed, upper-semicontinuity holds trivially in the degenerate case where all maps $f_j$ coincide with the identity, although this family does not satisfy the concordant shadowing property. 
More generally, in the absence of perturbations, upper-semicontinuity of the attractor remains valid without assuming concordant shadowing, as established by the following result.

\begin{maintheorem}
	\label{thm:usc}
	Assume that $(X,d)$ is a compact metric space and let $f_j : X \to X$ be a contraction for each $1\le j \le n$. Then, the local attractor map 
	$$(K^*(X))^n \ni \mathfrak  X    \mapsto A_{\mathfrak X}$$ 
	is upper-semicontinuous (in the Vietoris topology).
\end{maintheorem}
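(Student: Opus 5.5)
Fix the global contractions $f_j : X\to X$, and let $\mathfrak X=(X_j)_j$ with local attractor $A_{\mathfrak X}$. We have to show the following: for every open $V\supseteq A_{\mathfrak X}$ there is a Vietoris neighbourhood $\mathcal U$ of $\mathfrak X$ in $(K^*(X))^n$ such that $A_{\mathfrak Y}\subseteq V$ for all $\mathfrak Y\in\mathcal U$. It suffices to take neighbourhoods of the form $Y_j\subseteq B_\delta(X_j)$, that is, $Y_j$ lies in the open $\delta$-neighbourhood of $X_j$, since these contain Vietoris neighbourhoods. The argument is by contradiction combined with compactness. It uses the characterisation of the local attractor as the maximal compact $F_{\mathfrak X}$-invariant set (\cite[Theorem~A]{OV25a}). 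It also uses the fact that a point of $A_{\mathfrak X}$ has an admissible backward orbit inside $A_{\mathfrak X}$: invariance $A_{\mathfrak X}=F_{\mathfrak X}(A_{\mathfrak X})$ gives a preimage in some $A_{\mathfrak X}\cap X_j$.

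The first step is a limit description. Set $\mathfrak X^\delta:=(\overline{B_\delta(X_j)})_j$; this is again a local IFS with the same maps, so it has a maximal invariant compact set $A_{\mathfrak X^\delta}$. Monotonicity is clear from maximality: if $Y_j\subseteq Z_j$ for all $j$, then $F_{\mathfrak Y}(B)\subseteq F_{\mathfrak Z}(B)$. Hence $A_{\mathfrak Y}$ is $F_{\mathfrak Y}$-invariant, is contained in $F_{\mathfrak Z}(A_{\mathfrak Y})$, and the closure of the union of its forward $F_{\mathfrak Z}$-images gives an $F_{\mathfrak Z}$-sub-invariant set. I would instead argue directly with backward orbits, using the characterisation
\[
A_{\mathfrak Y}=\{x : \exists\, (x_{-k})_{k\ge0},\ x_0=x,\ x_{-k-1}\in Y_{a_{k}},\ f_{a_k}(x_{-k-1})=x_{-k}\},
\]
that is, the set of points admitting an infinite admissible backward orbit. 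This follows from \cite{OV25a}: a compact set $B\subseteq F_{\mathfrak Y}(B)$ lies in the attractor, and the set above is compact by a diagonal argument. With this characterisation, $\mathfrak Y\le \mathfrak Z$ implies $A_{\mathfrak Y}\subseteq A_{\mathfrak Z}$. It therefore suffices to prove that $\bigcap_{\delta>0}A_{\mathfrak X^\delta}=A_{\mathfrak X}$. The sets $A_{\mathfrak X^\delta}$ are decreasing compacts, so once this equality holds, for any open $V\supseteq A_{\mathfrak X}$ some $A_{\mathfrak X^\delta}\subseteq V$, and then every $\mathfrak Y$ with $Y_j\subseteq B_\delta(X_j)$ satisfies $A_{\mathfrak Y}\subseteq A_{\mathfrak X^\delta}\subseteq V$.

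The second step, which is the main obstacle, is the inclusion $\bigcap_\delta A_{\mathfrak X^\delta}\subseteq A_{\mathfrak X}$. Take $x$ in the intersection. For each $m$ pick a backward orbit $(x^{(m)}_{-k})_k$ with codes $a^{(m)}$ that is admissible for $\mathfrak X^{1/m}$. By compactness of $X^{\mathbb N}\times\{1,\dots,n\}^{\mathbb N}$, a diagonal subsequence converges coordinatewise to some $(x_{-k})_k$ with codes $(a_k)$; the finitely many symbols stabilise at each coordinate. Continuity of $f_{a_k}$ passes the relation $f_{a_k}(x_{-k-1})=x_{-k}$ to the limit. Closedness gives $x_{-k-1}\in\bigcap_m \overline{B_{1/m}(X_{a_k})}=X_{a_k}$. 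Thus $x$ has an $\mathfrak X$-admissible infinite backward orbit, so $x\in A_{\mathfrak X}$. Here the globally defined continuous maps are what make the limit relation valid. Contraction is used only to guarantee compactness and nonemptiness issues of the attractors as in \cite{OV25a}. The empty-attractor case is handled by the same argument: if $A_{\mathfrak X}=\emptyset$, the intersection is empty, so some $A_{\mathfrak X^\delta}$ is empty.

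The main care will be in justifying the backward-orbit characterisation of the local attractor, in particular its compactness. I would derive that compactness from the same diagonal and closedness argument. Maximality then comes from the sub-invariance $B\subseteq F_{\mathfrak Y}(B)$ for this set $B$, together with the result of \cite{OV25a} that the maximal invariant set contains every compact sub-invariant set.
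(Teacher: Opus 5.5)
Your proposal is correct, and it takes a genuinely different route from the paper's proof.

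\textbf{The paper's route.} The paper argues by contradiction. It takes a sequence $\mathfrak Y_m$ converging in the Hausdorff metric to $\mathfrak X$, and points $x_m\in A_{\mathfrak Y_m}$ staying at distance at least $\varepsilon$ from $A_{\mathfrak X}$. It diagonalizes only the codes $\underline b^m\in\Sigma^-_{\mathfrak Y_m}$ of these points. It then shows that the limit code $\underline b^*$ lies in $\Sigma^-_{\mathfrak X}$, by passing finite admissible compositions to the limit. Finally it needs the uniform contraction rate to prove $x_{m_i}\to\pi_{\mathfrak X}(\underline b^*)$, which gives the contradiction.

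\textbf{Your route.} You identify $A_{\mathfrak Y}$ with the set of points having an infinite admissible backward orbit. This gives monotonicity of $\mathfrak Y\mapsto A_{\mathfrak Y}$ under inclusion of domains. You then reduce to the single nested family $\mathfrak X^\delta$. You diagonalize points and symbols of backward orbits together, so the limiting $\mathfrak X$-admissible backward orbit is obtained directly. The finite intersection property for the decreasing compacts $A_{\mathfrak X^\delta}$ finishes the argument.

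\textbf{What your approach buys.} Because the points are carried along in the compactness argument, contraction is never used. The compactness and maximality of the backward-orbit set follow from the same diagonal argument you give. So your remark that contraction is needed ``for compactness and nonemptiness'' is unnecessary: your proof works for arbitrary continuous global maps $f_j$. It also yields upper semicontinuity for the upper Vietoris topology, since you only require $Y_j\subseteq B_\delta(X_j)$ rather than Hausdorff closeness. As a by-product you get the formula $A_{\mathfrak X}=\bigcap_{\delta>0}A_{\mathfrak X^\delta}$.

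\textbf{What the paper's approach buys.} It stays inside the code-space machinery ($\pi_{\mathfrak X}$, $\Sigma^-_{\mathfrak X}$) used in the rest of the paper, for instance in the proof of the shadowing-based semicontinuity result. The price is its reliance on contraction.

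\textbf{Minor fix.} The abandoned sentence in your first step, about the closure of forward $F_{\mathfrak Z}$-images, should be deleted. The backward-orbit argument that follows it already gives monotonicity cleanly.
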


\begin{remark}
	The map 
	$ 
	(K^*(X))^n \ni \mathfrak X \mapsto A_{\mathfrak X}$ 
	need not be lower-semicontinuous in general (see e.g. Example~\ref{examp: non-semicont}).
\end{remark}
\color{black}

\medskip
Given a contractive local IFS  $R_{\mathfrak X}=(X_j,f_j)_{1\le j \le n}$ we proceed to characterize the both the space of pseudo-orbits that are shadowable and the space of points whose orbits appear in the shadowing property. Consider the spaces
\begin{equation}
    \label{eq:def-enlarged-Omega0}
    \Omega =\Big\{((x_i,a_i))_{i\ge 0} \in (X\times \{1, 2, \dots, n\})^{\mathbb N} \colon x_{i+1}=f_{\underline a_i}(x_i) \; \text{for every}\; i\ge 0 \Big\}.
\end{equation}
and 
\begin{equation}
    \label{eq:def.Ia}
    I(\underline{a}) = \pi_{0,X} \left( \left\{ (x_i, a_i)_{i \geq 0} \in \Omega \mid \pi_{\Sigma^+} \left( (x_i, a_i)_{i \geq 0} \right) = \underline{a} \right\} \right) \subset X_{a_0}.
\end{equation}
Due to the finiteness and continuity of the maps $f_j$, it follows that $I(\underline{a}) \in K^*(X_{a_0})$. Moreover,
it is clear that if $x\in X$ is an $(\underline a,\vep)$-shadow for some $(\underline{a}, \delta)$-pseudo-orbit can only occur if
$$
\underline{a} \in \pi_{\Sigma^+}(\Omega) =: \Sigma_{\infty}
\quad\text{and}\quad
x\in I(\underline a).
$$
This suggests to define, 
 	for $\underline{a} \in \Sigma^+$, the set $\Gamma_{\underline a, \delta}$ as the set of all $(\underline{a}, \delta)$-pseudo-orbits.
    Notice that $\Gamma_{\underline a, \delta} \supset \Gamma_{\underline a, \delta'}$ if $\delta > \delta'$. We note that if $\underline{a}\in \Sigma_{\infty}$ if $\bigcap_{\delta>0} \Gamma_{\underline a, \delta} \neq \emptyset$
	(we say $\underline a$ is \textit{significant}).
	\bigskip

	\begin{maintheorem}\label{thm:Y} Let $R_{\mathfrak X}$ be a contractive local IFS.
    The following are equivalent:
    \begin{enumerate}
        \item[(B1)] $R_{\mathfrak{X}}$ has the concordant shadowing property;
        \item[(B2)]
        for any significant sequence 
        $\underline{a} \in \Sigma_{\infty}$, 
		\[ 
        \lim_{\delta \to 0} \operatorname{dist}_H(\pi_0(\Gamma_{\underline a, \delta}), I(\underline{a})) = 0. 
        \]
    \end{enumerate}
    In consequence, combinatorial stability is equivalent to robustness of condition (B2).
	\end{maintheorem}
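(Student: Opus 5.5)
Throughout, I read the data as follows: $\pi_0$ is the projection of a pseudo-orbit onto its initial point $x_0$, and $\Gamma_{\underline a,\delta}$ consists of the $(\underline a,\delta)$-pseudo-orbits that lie in $A_{\mathfrak X}$, as in Definition~\ref{def:concordant-shadowing}. Genuine orbits are $(\underline a,\delta)$-pseudo-orbits for every $\delta>0$. Hence $I(\underline a)\subset \pi_0(\Gamma_{\underline a,\delta})$ for every $\delta$, at least on the part of $I(\underline a)$ inside the attractor. The Hausdorff distance in (B2) therefore reduces to the one-sided excess
\[
e(\delta)=\sup_{(x_k)\in\Gamma_{\underline a,\delta}}\dist\bigl(x_0,I(\underline a)\bigr).
\]
Since $\Gamma_{\underline a,\delta}$ decreases with $\delta$, the function $e(\delta)$ is monotone, and (B2) says exactly that $e(\delta)\to 0$. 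The proof is then two implications about this excess.

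\textbf{(B1)$\Rightarrow$(B2).} Fix a significant $\underline a$ and $\vep>0$, and take $\delta$ from concordant shadowing. Any $(x_k)\in\Gamma_{\underline a,\delta}$ is $(\underline a,\vep)$-shadowed by some $x\in A_{\mathfrak X}$. The orbit $f^k_{\underline a}(x)$ is then defined for all $k$, so $x\in I(\underline a)$.
\begin{itemize}
\item If shadowing is required at $k=0$, we get $d(x,x_0)<\vep$ directly.
\item If shadowing is only required for $k\ge 1$, we lose control of $x_0$ itself, since $f_{a_0}$ need not be injective. I would handle this by prepending the step: shadow the shifted pseudo-orbit, or better, use the uniform estimate in the other direction given below. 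Equivalently, I would observe that the definitions are used with $k\ge 0$ in Section~\ref{sec:shadows}.
\end{itemize}
In either case $e(\delta)\le\vep$.

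\textbf{(B2)$\Rightarrow$(B1).} This is the main step, and it needs uniformity over $\underline a$, because $\delta$ in (B1) may not depend on $\underline a$. Let $\lambda=\max\lambda_j<1$ and fix $\vep$. Pick $N$ with $\lambda^N\diam X<\vep/2$. Given an $(\underline a,\delta)$-pseudo-orbit $(x_k)$ in $A_{\mathfrak X}$, the first task is to produce a shadow. For each $m$, apply (B2) to the shifted sequence $\sigma^m\underline a$ (which is significant, since the tail of $(x_k)$ witnesses this) to get $y_m\in I(\sigma^m\underline a)$ near $x_m$. Then:
\begin{itemize}
\item By compactness of $I(\cdot)$ and contraction, a limit point $x$ of suitable pull-backs lies in $I(\underline a)$.
\item The standard contraction estimate
\[
d\bigl(f^k_{\underline a}(x),x_k\bigr)\le \sum_{j<k}\lambda^{k-1-j}\delta + \lambda^k\diam X
\]
controls all times $k\ge N$ once $\delta<\vep(1-\lambda)/2$. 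This is exactly as in the proof that contractive IFSs have concordant shadowing.
\end{itemize}
The real difficulty is admissibility: the true orbit must stay in the domains $X_{a_k}$. This is what (B2) supplies, because it provides genuine $\underline a$-admissible points near each $x_m$. The step I expect to be hardest is making $\delta$ uniform in $\underline a$. Here I would argue by contradiction with compactness. Suppose $\underline a^{(i)}$ and $(x^{(i)}_k)\in\Gamma_{\underline a^{(i)},1/i}$ fail. Pass to a limit $\underline a^{(i)}\to\underline a$ in $\Sigma^+$ and $x^{(i)}_k\to x_k$. The limit is a genuine orbit, so $\underline a\in\Sigma_\infty$. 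Then (B2) at $\underline a$ and its first $N$ shifts gives the contradiction. This uses the closedness of the $X_j$ and the fact that the first $N$ symbols stabilize.

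\textbf{Final claim.} The last assertion ("in consequence") follows by combining the equivalence with Corollary~\ref{thm:robust-concordantshadowing}. That corollary identifies combinatorial stability with robust concordant shadowing, so robustness of (B1) is the same as robustness of (B2).
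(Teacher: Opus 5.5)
\textbf{Where the proposal is fine.} Your (B1)$\Rightarrow$(B2) argument is the paper's. The paper runs it contrapositively through Lemma~\ref{lem:L2}, and it likewise uses shadowing at time $k=0$.

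\textbf{The gap: (B2)$\Rightarrow$(B1).} You are right that $\delta$ in (B1) must not depend on $\underline a$. The paper's own proof chooses $\delta_0$ after fixing $\underline a$ and never addresses this. But your compactness repair fails.
Passing to a limit $\underline a^{(i)}\to\underline a$, $x^{(i)}_k\to x_k$ gives a genuine orbit along the \emph{limit} sequence. What you need is a point of $I(\underline a^{(i)})$ near $x^{(i)}_0$. Admissibility along $\underline a^{(i)}$ is decided by symbols beyond the first $N$, and $\underline a\mapsto I(\underline a)$ is not lower semicontinuous. So (B2) at $\underline a$ and finitely many of its shifts yields no contradiction.
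The ``pull-back'' step is also unjustified. The $f_j$ need not be invertible, preimages need not lie in the domains, and a single $\delta$-pseudo-orbit does not witness that $\sigma^m\underline a$ is significant.

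\textbf{Why no repair is possible.} The very compactness you use shows that (B2), read pointwise in $\underline a$, holds for \emph{every} local IFS with continuous maps on closed domains. If $(x^{(i)}_k)\in\Gamma_{\underline a,\delta_i}$ with $\delta_i\to0$, a diagonal subsequence converges coordinatewise to a genuine $\underline a$-orbit. Hence every limit point of $x^{(i)}_0$ lies in $I(\underline a)$. Conversely $I(\underline a)\subset\pi_0(\Gamma_{\underline a,\delta})$, since orbits are pseudo-orbits.

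\textbf{Yet (B1) can fail.} On $X=[-1,1]$ let $f_1(x)=f_2(x)=x/2$, $f_3(x)=(x+1)/2$, $f_4(x)=(x-1)/2$, with $X_1=X_3=X_4=X$ and $X_2=[0,1]$.
Then $F_{\mathfrak X}(X)=X$, so $A_{\mathfrak X}=X$. For $\underline a^{(n)}=1^n2^\infty$ with $n\ge2$ one has $I(\underline a^{(n)})=[0,1]$.
The sequence $x_k=-2^{-k}$ for $k<n$ and $x_k=0$ for $k\ge n$ is an $(\underline a^{(n)},\delta)$-pseudo-orbit in $A_{\mathfrak X}$ once $2^{-n}<\delta$. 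Every $x\in I(\underline a^{(n)})$ satisfies $d(f_{a_0}(x),x_1)\ge 1/2$, so (B1) fails for every $\vep\le 1/2$.
For each fixed $n$, however, initial points of $(\underline a^{(n)},\delta)$-pseudo-orbits lie within $2^{n+1}\delta$ of $[0,1]$.

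\textbf{What is actually true.} The equivalence holds only if (B2) is uniform in $\underline a$, namely $\lim_{\delta\to0}\sup_{\underline a\in\Sigma^+}\dist_H(\pi_0(\Gamma_{\underline a,\delta}),I(\underline a))=0$. Two conventions are needed alongside this.
First, set $\dist_H(B,\emptyset)=\infty$ for $B\neq\emptyset$, so that non-significant sequences carry no $\delta$-pseudo-orbits for small $\delta$. The paper's claim that they carry none also holds only below an $\underline a$-dependent threshold.
Second, use $I(\underline a)\cap A_{\mathfrak X}$ in place of $I(\underline a)$, since shadows must lie in $A_{\mathfrak X}$.
Under that reading the proof is the direct one, with no compactness or pull-backs: take $x$ in this set with $d(x,x_0)<\vep/2$ and apply Lemma~\ref{lem:L1} with $\delta<\vep(1-\lambda)/2$.

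\textbf{The closing assertion.} Corollary~\ref{thm:robust-concordantshadowing} is only the implication from robust concordant shadowing to combinatorial stability, not an identification of the two. So your last step yields at most one direction of the ``in consequence'' claim, for which the paper itself gives no argument.
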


The proof of Theorem~\ref{thm:Y} relies on fine estimates for the distances of shadowing points and pseudo-orbits, which may be of independent interest 
(cf. Lemma~\ref{lem:L1} and  Proposition~\ref{prop:X} for more details). 

\color{black}

    \begin{remark}
        In seeking a criterion for shadowing in local IFSs, the Open Set Condition (OSC) is of paramount importance. Without it, one can construct local IFSs with $(\underline{a}, \delta)$-pseudo-orbits for arbitrarily small $\delta$ that cannot be $(\underline{a}, \varepsilon)$-shadowed as in 
        Example~\ref{ex:shift2}
    \end{remark}

%%%%%%%%%%%%%%%%%%%%%%%%%%%%%%%%%%%%%%%%%%%
\subsection{Negative shadowing property}

In what follows we will introduce a concept of negative shadowing for local IFSs with the OSC. In rough terms, we aim at a shadowing property for points in the attractor such that remain close user negative iterations. 
Let us make it precise.  
By the open set condition, for every point $x\in A_{\mathfrak X}$ there exists a unique code $\underline b\in\Sigma^-_{\mathfrak X}$ such that
$x=\pi_{\mathfrak X}(\underline b)$. Hence the code map
$\pi_{\mathfrak X}:\Sigma^-_{\mathfrak X}\rightarrow A_{\mathfrak X}$
is an homeomorphism between compact metric spaces and, in particular, both $\pi_{\mathfrak X}$ and
$\pi_{\mathfrak X}^{-1}$ are uniformly continuous.
Define the map $T:A_{\mathfrak X}\to A_{\mathfrak X}$ by
\begin{equation}
    \label{eq:defT}
T(x)=\pi_{\mathfrak X}(\sigma^-(\underline b)) \in f_{b_{-1}}^{-1}(\{x\}),
\; \text{where } \underline b=\pi_{\mathfrak X}^{-1}(x).
\end{equation}
which defines a special negative iterate of the local IFS.
Given $\delta>0$, we say that a sequence $(x_k)_{k\le -1}$ in the local attractor $A_{\mathfrak X}$ is a \emph{negative $\delta$-pseudo orbit} if 
$$
d\Big(T(x_{-k}), \; x_{-k-1}\Big)<\delta
\quad \text{for every $k\le -1$.}
$$
\begin{definition}\label{def:negative-concordant-shadowing}
We say that $R_{\mathfrak X}$ satisfies the \emph{negative shadowing property} if for every $\vep>0$ there exists $\delta>0$ such that every negative $\delta$-pseudo-orbit $(x_{k})_{k\le -1}$ in $A_{\mathfrak X}$ there exists $x\in A_{\mathfrak X}$ so that 
\begin{equation}
    \label{eq:shadowinversex}
    d\Big( T^k(x), \; x_{-k} \Big)<\vep,
\end{equation}
in which case we say that $(x_{k})_{k\le -1}$ is 
\emph{$\vep$-shadowed} by $x\in A_{\mathfrak X}$.  
\end{definition}

We observe that, by construction (recall ~\eqref{eq:defT}), one has that 
$$
\pi_{\mathfrak X}(\sigma^-(\underline b))
= T(x)=T(\pi_{\mathfrak X}(\underline b))
$$
for every $\underline b\in\Sigma^-_{\mathfrak X}$, where
$x=\pi_{\mathfrak X}(\underline b)$.
This shows that 
$ 
\pi_{\mathfrak X}\circ \sigma^-\mid_{\Sigma^-_{\mathfrak X}} = T\circ \pi_{\mathfrak X}$,
hence $\sigma^-\mid_{\Sigma^-_{\mathfrak X}}$ and $T$ are topologically conjugate. 
In consequence, $R_{\mathfrak X}=(X_j,f_j)_{1\le j \le n}$ satisfies the negative shadowing property if and only if the same holds for $T$ and $\sigma^-\mid_{\Sigma_{\mathfrak X}^-}.$ 
In particular, using  
Walters' classification
\cite{Wa78} of the shift-invariant sets that satisfy the shadowing property we deduce 
the following immediate consequence.

\begin{maincorollary}
 \label{thm:negativeshadowing-classification}
Let $R_{\mathfrak X}= \left(X_j, f_j \right)_{1\le j \le n}$ be  a contractive local IFS satisfying the OSC. Then
$R_{\mathfrak X}$ satisfies the negative concordant shadowing property if and only if the code space $\Sigma^-_{\mathfrak X}$ is a subshift of finite type.
\end{maincorollary}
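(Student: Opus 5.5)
The plan is to reduce the statement to Walters' theorem \cite{Wa78} via the conjugacy noted just before the statement. Under the OSC, the code map $\pi_{\mathfrak X}:\Sigma^-_{\mathfrak X}\to A_{\mathfrak X}$ is a homeomorphism between compact metric spaces, and $\pi_{\mathfrak X}\circ\sigma^-=T\circ\pi_{\mathfrak X}$ on $\Sigma^-_{\mathfrak X}$. So the map $T$ is topologically conjugate to the shift restricted to the code space. I will first verify that $\Sigma^-_{\mathfrak X}$ is a closed $\sigma^-$-invariant subset of $\Sigma^-$, i.e.\ a subshift. Closedness follows from compactness of $A_{\mathfrak X}$ and continuity of the coding, as described in Section~\ref{sec:prelim}. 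Invariance holds because $T$ maps $A_{\mathfrak X}$ into itself: if $x=f_{b_{-1}}(y)$ with $y\in A_{\mathfrak X}\cap X_{b_{-1}}$, then $y$ is coded by $\sigma^-(\underline b)$.

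Next I check that the negative shadowing property of Definition~\ref{def:negative-concordant-shadowing} is exactly the usual (forward) shadowing property for the single map $T:A_{\mathfrak X}\to A_{\mathfrak X}$, after the harmless reindexing $y_k:=x_{-k}$. The defining inequality $d(T(x_{-k}),x_{-k-1})<\delta$ becomes $d(T(y_j),y_{j+1})<\delta$. The shadowing requirement becomes $d(T^j(x),y_j)<\vep$. One point needs care: the paper's indexing starts at $k\le -1$, so I will fix the convention that the pseudo-orbit index and the iterate are matched up to a constant shift. Such a one-step shift is immaterial, because $T$ is uniformly continuous. Hence negative shadowing for $R_{\mathfrak X}$ is equivalent to shadowing for $T$.

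Then I show that the shadowing property is a topological conjugacy invariant for maps of compact metric spaces. Let $h=\pi_{\mathfrak X}$, which is uniformly continuous, as is $h^{-1}$. Given $\vep$, choose $\vep'$ with $d(u,v)<\vep'\Rightarrow d(h^{-1}u,h^{-1}v)<\vep$. Take $\delta'$ for $T$ and $\vep'$. Choose $\delta$ so that $d(u,v)<\delta\Rightarrow d(hu,hv)<\delta'$. Under these choices, $\delta$-pseudo-orbits of $\sigma^-$ map to $\delta'$-pseudo-orbits of $T$. Their $T$-shadows pull back to $\sigma^-$-shadows, and the symmetric argument gives the converse. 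Therefore $R_{\mathfrak X}$ has negative shadowing if and only if $\sigma^-|_{\Sigma^-_{\mathfrak X}}$ has shadowing.

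Finally, I apply Walters' theorem: a subshift of $\{1,\dots,n\}^{\mathbb N}$ has the shadowing (pseudo-orbit tracing) property if and only if it is of finite type. Here the one-sided shift $\sigma^-$ on $\{1,\dots,n\}^{-\mathbb N}$ is identified with the standard one-sided shift by reversing indices.

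I expect the main obstacle to be bookkeeping rather than substance. One task is making sure $\Sigma^-_{\mathfrak X}$ is genuinely a closed shift-invariant set in the sense Walters uses. The other is checking that his result applies to one-sided subshifts with the metric induced on $\Sigma^-$, which it does, since the property is metric-independent on compact spaces. Contractivity enters only through the well-definedness and continuity of $\pi_{\mathfrak X}$, and the OSC enters only through its injectivity.
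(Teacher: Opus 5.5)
Your route is the paper's: conjugate $T$ to $\sigma^-|_{\Sigma^-_{\mathfrak X}}$ through the homeomorphism $\pi_{\mathfrak X}$, transfer shadowing across the conjugacy, and invoke Walters. The conjugacy-invariance step is correct, and so is the Walters step; the standard proof of Walters' theorem works verbatim for closed forward-invariant one-sided subshifts, so surjectivity of the shift is not needed there.

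One step is wrong: the claim that a one-step shift in the indexing of Definition~\ref{def:negative-concordant-shadowing} is immaterial because $T$ is uniformly continuous. Suppose the shadow must satisfy $d(T^k(x),x_{-k})<\vep$ for $k\ge 1$, which is what the paper's formula literally says. Then $T(x)$ is a standard shadow of $(x_{-k})_{k\ge1}$ that lies in $T(A_{\mathfrak X})$. To recover the shifted version from a standard shadow $z$, you need a $T$-preimage of $z$, i.e. surjectivity of $T$, and uniform continuity does not give this. Here $T$ is onto exactly when $A_{\mathfrak X}\subseteq\bigcup_j X_j$, i.e. when $\sigma^-(\Sigma^-_{\mathfrak X})=\Sigma^-_{\mathfrak X}$, and endpoints can make this fail.

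For example, take $X=[0,1]$, $X_1=X_2=[0,1/2]$, $f_1(x)=x/3$ and $f_2(x)=x/3+2/3$. The images are disjoint and the maps injective, and one gets $A_{\mathfrak X}=\{0,2/3\}$ with code space $\{\dots111,\dots1112\}$, which is an SFT after reversal. Moreover $T(A_{\mathfrak X})=\{0\}$. The genuine $T$-orbit $2/3,0,0,\dots$ is trivially shadowed in the standard sense, but it cannot be shadowed in the shifted sense for $\vep<2/3$.

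So do not assert that the two conventions are equivalent. Fix the standard one explicitly, namely $d(T^{k-1}(x),x_{-k})<\vep$ for all $k\ge 1$, so the shadow itself is close to $x_{-1}$. This is the convention Walters' theorem addresses, and with it your argument proves the corollary.
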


\color{black}

\subsection{Combinatorial stability}

Local IFSs may exhibit a rich and intricate combinatorial structure. In  particular, the code space of a contractive local IFS $R_{\mathfrak X}=(X_j,f_j)_{1\le j \le n}$ may be a proper shift-invariant subset of the full symbolic space $\Sigma^-$, and it may change under perturbations of the domains $X_j$ and the maps $f_j$ within an appropriate topology, that we now define. 
Consider the spaces of local IFSs is defined by
    $$
\mathcal F_C=\Big\{R_{\mathfrak X}=(X_j,f_j)_{1\le j \le N} \,\colon\,    \mathfrak X=(X_j)_{1\le j \le N} \in (K^*(X))^N\; \text{and}\; f_j: X_j \to X \; \text{is a contraction}\Big\}
$$
and
$$
\mathcal F_0=\Big\{R_{\mathfrak X}=(X_j,f_j)_{1\le j \le N} \,\colon\,    \mathfrak X=(X_j)_{1\le j \le N} \in (K^*(X))^N\; \text{and}\; f_j: X_j \to X \; \text{is continuous}\Big\}. 
$$
\color{black}
We also define 
$\mathcal F_*^{(n)}=\Big\{  R_{\mathfrak X} \in \mathcal F_* \colon N=n\Big\}$ for $*\in\{C,0\}$. 
In order to avoid superfluous information, when comparing two local IFSs $R_{\mathfrak X}, R_{\mathfrak y} \in \mathcal F_0$ and studying its topological stability we will require them to have the same number of domains, and for the domains and maps to be close (in an appropriate sense to be defined below). 

\medskip
Let us define a Skorohod-type distance in the space of local IFSs.
In order to do so, we consider local IFSs parameterized by the number $n$ of domains.  
 For each pair $X_i, Y_i \in K^*(X)$, the set
\[
\mathcal{H}(X_i, Y_i) :=\{\zeta: X_i \to Y_i \colon  \text{$\zeta$ is an homeomorphism}\}
\]
allows to pair and parametrize homeomorphic domains of local IFSs.
Now, fix $L>0$ such that $L> 3 \operatorname{diam}(X)$ and define
\begin{equation}
    \label{eq:distanceS}
    d^0_{\mathcal S}(f_i,g_i)
    :=\begin{cases}
    {\inf}_{\zeta \in \mathcal{H}(X_i, Y_i)} \; \{ {d_{C^0}(\, \zeta, id_{X_i}\,)+} \;d_{C^0}(\, f_i, g_i\circ \zeta\,)\} & \text{ if } \mathcal{H}(X_i, Y_i) \neq \emptyset\\
    L  & \text{ otherwise} 
\end{cases},
\end{equation}
{were $d_{C^0}(\, \zeta, id_{X_i}\,)$ stands for the $C^0$-distance given by $\sup_{x \in X_i} d(\zeta(x), x)$.}
Now, given two local IFSs $R_{\mathfrak X}=(X_j,f_j)_{1\le j \le n}, R_{\mathfrak Y}=(Y_j,f_j)_{1\le j \le n}$ we define their $D$-distance as
\begin{equation}
\label{def:distanceS}
D\Big(R_{\mathfrak X}, R_{\mathfrak Y}\Big)
    = \max_{1\le i \le n} \text{dist}_H(X_i,Y_i) + \max_{1\le i \le n} d_{\mathcal S}(f_i,g_i)     
\end{equation}
where the Skorohod-type distance $d_{\mathcal S}$ 
is defined by 
$d_{\mathcal S}(f_i,g_i) =\max\{d^0_{\mathcal S}(f_i,g_i),d^0_{\mathcal S}(g_i,f_i)\}$
(we refer the reader e.g. to \cite{Witt} for the definition of Skorohod metrics on cadlag spaces).
It is not hard to show that $D$ defines a metric.

\begin{remark}
We note that classical IFSs are combinatorially stable (the code space of arbitrary perturbations is the full shift) and that an easier topology on the space of IFSs is obtained by identification with  elements in $C(\{1,..,n \}\times X , X)$  making the situation substantially simpler
(see \cite{AT25}).
\end{remark}
 
We are now in a position to formulate the notion of combinatorial stability for local IFSs.

\begin{definition}\label{def:combinatorially stable}
We say that a local IFS  $R_{\mathfrak X}=(X_i,f_i)_{1\le i \le n}$ is \emph{combinatorially stable}\index{Combinatorially stable} if there exists an open neighborhood $\mathcal V$ of $R_\mathfrak X$  so that 
$ \Sigma^-_{\mathfrak X} = \Sigma^-_{\mathfrak Z}$ for every $R_\mathfrak Z\in \mathcal V$. 
\end{definition}

By definition, the notion of combinatorial stability means that the code spaces of local attractors for nearby local IFSs remain the same, hence do not go through any  bifurcation. In what follows we will provide a criterion for combinatorial stability.

\begin{definition}
    We say that $R_{\mathfrak X}$ satisfies the \emph{
    robust concordant shadowing property} 
    if for every $\vep>0$ there exists an open neighborhood $\mathcal V$ 
        of $R_\mathfrak X$ and $\delta>0$ so that for every $R_\mathfrak Y\in \mathcal V$, every $(\underline a,\delta)$-pseudo orbit for $R_{\mathfrak Y}$  
    is $(\underline a,\vep)$-shadowed by an orbit of $R_{\mathfrak Y}$.
\end{definition}

As a byproduct of Theorem~\ref{thm:shadowing-usc} we obtain that the robust concordant shadowing property suffices for combinatorial stability.

\begin{maincorollary}
\label{thm:robust-concordantshadowing}
Let $(X,d)$ be a compact metric space. If $R_{\mathfrak X}= \left(X_j, f_j \right)_{1\le j \le n}$ satisfies the 
robust concordant shadowing property then 
$R_{\mathfrak X}$ is combinatorially  stable.
\end{maincorollary}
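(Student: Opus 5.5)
The plan is to get the two inclusions $\Sigma^-_{\mathfrak Z}\subseteq\Sigma^-_{\mathfrak X}$ and $\Sigma^-_{\mathfrak X}\subseteq\Sigma^-_{\mathfrak Z}$ separately, for every $R_{\mathfrak Z}$ in a suitable neighborhood $\mathcal V$ of $R_{\mathfrak X}$. Both come from Theorem~\ref{thm:shadowing-usc}, applied once at $R_{\mathfrak X}$ and once at each nearby system.

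\textbf{Step 1 (first inclusion).} Robust concordant shadowing implies, in particular, concordant shadowing for $R_{\mathfrak X}$ itself. Theorem~\ref{thm:shadowing-usc} therefore gives an open neighborhood $\mathcal V_0$ of $R_{\mathfrak X}$ with $\Sigma^-_{\mathfrak Z}\subseteq\Sigma^-_{\mathfrak X}$ for every $R_{\mathfrak Z}\in\mathcal V_0$.

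\textbf{Step 2 (reverse inclusion).} Fix $\vep>0$ and let $\mathcal V_1$ and $\delta$ be the neighborhood and constant given by the robust property. For each $R_{\mathfrak Z}\in\mathcal V_1$, every $(\underline a,\delta)$-pseudo-orbit of $R_{\mathfrak Z}$ is $(\underline a,\vep)$-shadowed. Since $\vep$ is arbitrary, I expect this to show that every $R_{\mathfrak Z}$ in a smaller neighborhood $\mathcal V_2\subseteq\mathcal V_1$ itself satisfies concordant shadowing. Theorem~\ref{thm:shadowing-usc} applied at $R_{\mathfrak Z}$ then yields a neighborhood $\mathcal V_{\mathfrak Z}$ of $R_{\mathfrak Z}$ with $\Sigma^-_{\mathfrak Y}\subseteq \Sigma^-_{\mathfrak Z}$ for all $R_{\mathfrak Y}\in\mathcal V_{\mathfrak Z}$. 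Since $D$ is a metric, $R_{\mathfrak X}\in\mathcal V_{\mathfrak Z}$ whenever $D(R_{\mathfrak X},R_{\mathfrak Z})$ is smaller than the radius of $\mathcal V_{\mathfrak Z}$, and this gives $\Sigma^-_{\mathfrak X}\subseteq\Sigma^-_{\mathfrak Z}$.

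\textbf{Main obstacle: uniformity.} The radius of $\mathcal V_{\mathfrak Z}$ must not collapse as $R_{\mathfrak Z}$ ranges over $\mathcal V_2$. A cleaner route, which I would try first, bypasses the neighborhood $\mathcal V_{\mathfrak Z}$ and reruns the proof of Theorem~\ref{thm:shadowing-usc} with the roles exchanged.
\begin{itemize}
\item[(a)] Take $\underline b\in\Sigma^-_{\mathfrak X}$, realized by a point $x=\pi_{\mathfrak X}(\underline b)\in A_{\mathfrak X}$ whose backward orbit follows $\underline b$.
\item[(b)] For $R_{\mathfrak Z}$ with $D(R_{\mathfrak X},R_{\mathfrak Z})$ small, transport the finite $R_{\mathfrak X}$-orbit segments along $\underline b$ via the near-identity homeomorphisms $\zeta_i:X_i\to Z_i$ from the Skorohod distance \eqref{eq:distanceS}. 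This gives $(\underline a,\delta)$-pseudo-orbits of $R_{\mathfrak Z}$ with $\underline a$ the corresponding finite words of $\underline b$, since $d(\zeta_i(y),y)$ and $d(f_i(y),g_i(\zeta_i(y)))$ are both controlled by $D$.
\item[(c)] Robust concordant shadowing, with $\delta$ independent of $\mathfrak Z\in\mathcal V_1$, supplies genuine $R_{\mathfrak Z}$-orbits following these words exactly.
\item[(d)] A compactness and diagonal argument, using contraction to pass to limits of the shadowing points, produces a point of $A_{\mathfrak Z}$ coded by $\underline b$. The argument should be checked to land in $A_{\mathfrak Z}$ rather than merely in $X$, as in Theorem~\ref{thm:shadowing-usc}.
\end{itemize}
It is precisely the uniformity of $\delta$ over $\mathcal V_1$ that makes this work for all $R_{\mathfrak Z}$ simultaneously. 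Taking $\mathcal V=\mathcal V_0\cap\mathcal V_1$, intersected with the $D$-ball needed in (b), both inclusions hold, so $\Sigma^-_{\mathfrak X}=\Sigma^-_{\mathfrak Z}$ for every $R_{\mathfrak Z}\in\mathcal V$, which is combinatorial stability.
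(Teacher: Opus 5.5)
Your plan is the argument the paper has in mind. The paper gives no proof beyond calling the corollary a byproduct of Theorem~\ref{thm:shadowing-usc}: that theorem at $R_{\mathfrak X}$ gives one inclusion, and the same shadowing argument with roles exchanged, using that $\delta$ is uniform on the neighborhood, gives the other. Drop the first route of Step~2. The neighborhood in the robust property depends on $\vep$ and may shrink to $\{R_{\mathfrak X}\}$ as $\vep\to 0$, so nothing says a fixed $R_{\mathfrak Z}\neq R_{\mathfrak X}$ has concordant shadowing at every scale. The cleaner route correctly uses one fixed $\vep$, and (d) is superfluous: once every word $(b_{-k},\dots,b_{-1})$ is admissible for $R_{\mathfrak Z}$, the sets $V_{[\underline b]_k}$ of $R_{\mathfrak Z}$ are nonempty, compact and nested, so $\pi_{\mathfrak Z}(\underline b)\neq\emptyset$.

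The genuine gap is in (b)--(c). Concordant shadowing, robust or not (and also the finite version in Lemma~\ref{le:finiteshadowequiv}), only concerns infinite $(\underline a,\delta)$-pseudo-orbits, which need a point in $Y_{a_k}$ for every $k\ge 0$. A finite orbit segment along $(b_{-k},\dots,b_{-1})$ is covered only if it extends to such an infinite pseudo-orbit. This is impossible for small $\delta$ and large $k$ (the segment ends in $V_{[\underline b]_k}$) when $\pi_{\mathfrak X}(\underline b)$ is an endpoint at positive distance from $\bigcup_i X_i$. Your argument does work when $\pi_{\mathfrak X}(\underline b)\in A^\infty_{\mathfrak X}$: end the segment at $\pi_{\mathfrak X}(\underline b)$ and continue along an infinite forward orbit. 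But $\Sigma^-_{\mathfrak X}$ also contains codes of endpoints.

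For endpoint codes the conclusion is actually false. Take $X=[0,1]$, $X_1=[0,\frac{1}{2}]$, $f_1(x)=\frac{x}{2}+\frac{1}{8}$, $X_2=[\frac{1}{4},\frac{1}{2}]$, $f_2(x)=\frac{x}{2}+\frac{3}{4}$. Then $\Sigma^-_{\mathfrak X}=\{1^\infty,1^\infty 2\}$, where $1^\infty 2=(\dots,1,1,2)$ codes the endpoint $\frac{7}{8}$.
\begin{itemize}
\item Suppose $D(R_{\mathfrak X},R_{\mathfrak Y})<\eta<\frac{1}{16}$ and $\delta<\frac{1}{4}$. Then $g_2(Y_2)\subset[\frac{7}{8}-\eta,1]$ is $\delta$-far from $Y_1\cup Y_2\subset[0,\frac{1}{2}+\eta]$, so every infinite $(\underline a,\delta)$-pseudo-orbit of $R_{\mathfrak Y}$ has $\underline a=1^\infty$.
\item Moreover $g_1(Y_1)\subset Y_1$. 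Comparing $g_1$ with $x\mapsto\frac{x}{2}+\frac{1}{8}$ shows that the $g_1$-orbit of $y_0$ stays $2(3\eta+\delta)$-close to the pseudo-orbit, which is less than $\vep$ for small $\eta,\delta$. So $R_{\mathfrak X}$ has robust concordant shadowing.
\item Yet shrinking $X_2$ to $[\frac{1}{4}+\eta,\frac{1}{2}]$, a perturbation of $D$-size at most $\frac{5}{2}\eta$, removes $1^\infty 2$ from the code space. This is because $f_1^{k-1}(X_1)=[\frac{1}{4}-2^{-k-1},\frac{1}{4}+2^{-k-1}]$ eventually misses the new $X_2$.
\end{itemize}
Hence the inclusion you want in Step~2, and the corollary as stated, fail; the byproduct reasoning overlooks exactly this point.

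Step~1 inherits the same defect from Theorem~\ref{thm:shadowing-usc}, whose proof also shadows finite orbit segments. Adding $f_3(x)=\frac{x}{4}+\frac{1}{16}$ on $X_3=[0,\frac{1}{2}]$ and enlarging $X_2$ to $[\frac{1}{4}-\eta,\frac{1}{2}]$ creates new codes $(\dots,3,3,1,\dots,1,2)$ with $j$ ones, $j$ large.

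What rescues your argument is a hypothesis excluding endpoints, for example $\bigcup_j f_j(X_j)\subseteq\bigcup_i X_i$. Then finite orbit segments of $R_{\mathfrak X}$ extend to infinite orbits. Those of a $D$-close $R_{\mathfrak Z}$ extend to infinite $2D$-pseudo-orbits, since $g_i(Z_i)$ lies in the $2D$-neighborhood of $\bigcup_l Z_l$. With this, (b)--(c) gives $\Sigma^-_{\mathfrak X}\subseteq\Sigma^-_{\mathfrak Z}$, and the same transport argument through Corollary~\ref{cor:shadowclose} gives the reverse inclusion.
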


It might be expected that any local IFS $R_{\mathfrak X}$ satisfying the open set condition and hyperbolicity, under additional assumptions that the local attractor $A_{\mathfrak X}$ as the denseness of periodic orbits, would be combinatorially stable. However, this is not the case.

\begin{maintheorem}
\label{thm:beta}
There exists a contractive local IFS $R_{\mathfrak X}=(X_j,f_j)_{1\le j \le n}$ such that: 
\begin{itemize}
    \item[(i)]  $R_{\mathfrak X}$ does not satisfy the concordant shadowing property;
    \item[(ii)] $R_{\mathfrak X}$ is not combinatorially stable;
    \item[(iii)] $R_{\mathfrak X}$ satisfies the open set condition;
    \item[(iv)] the set of periodic orbits is dense in the code space $\Sigma_{\mathfrak X}^-$.
\end{itemize}
\end{maintheorem}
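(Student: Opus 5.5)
The plan is to realize the $\beta$-shift as the code space of a contractive local IFS on $X=[0,1]$. Fix a non-simple Parry number or, better, a $\beta\in(1,2)$ for which the $\beta$-shift $\Sigma_\beta$ is \emph{not} of finite type. Examples are a $\beta$ whose greedy expansion of $1$ is infinite and not eventually periodic, or one that is eventually periodic but not finite (sofic, not SFT). Let $T_\beta(x)=\beta x \bmod 1$ with branches on $[0,1/\beta)$ and $[1/\beta,1]$, and let the local IFS be given by the inverse branches:
\[
f_1(x)=x/\beta \ \text{ on } X_1=[0,1], \qquad f_2(x)=(x+1)/\beta \ \text{ on } X_2=[0,\beta-1].
\]
Both maps are contractions with ratio $1/\beta<1$. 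The images $f_1([0,1])=[0,1/\beta]$ and $f_2([0,\beta-1])=[1/\beta,1]$ meet only at $1/\beta$, so the open intervals $(0,1)$ satisfy the OSC, which gives (iii).

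Next I will identify the code space. The local attractor is $[0,1]$, and a backward word is admissible exactly when the corresponding compositions stay in the domains. Via the usual correspondence between admissible compositions of inverse branches and $\beta$-expansions, the admissible words are those in the $\beta$-shift (up to reversal of time). Hence $\Sigma^-_{\mathfrak X}$ is (the reversal of) $\Sigma_\beta$, and the endpoint $\beta-1=T_\beta(1)$ encodes the lexicographic constraint. Item (iv) then follows from the classical fact that periodic points are dense in every $\beta$-shift: the shifts $\Sigma_{\beta'}$ with $\beta'$ a simple Parry number approximate $\Sigma_\beta$ from inside and are SFTs with dense periodic points. Since reversal preserves both periodicity and density, (iv) transfers to $\Sigma^-_{\mathfrak X}$.

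For (ii), I will perturb the domain $X_2=[0,\beta-1]$ to $[0,\beta'-1]$ and $f_2$ to $(x+1)/\beta'$, and $f_1$ to $x/\beta'$, with $\beta'\to\beta$. These perturbations are $D$-close, using $\zeta$ an affine homeomorphism between the intervals. Parry's theorem says that $\beta\mapsto\Sigma_\beta$ is strictly increasing. Therefore $\Sigma^-_{\mathfrak Y}\ne\Sigma^-_{\mathfrak X}$ for every $\beta'\neq\beta$, and no neighborhood of $R_{\mathfrak X}$ has a constant code space. The statement is stronger in one direction: taking $\beta'>\beta$ gives $\Sigma_{\beta'}\supsetneq\Sigma_\beta$, which contradicts the conclusion $\Sigma^-_{\mathfrak Y}\subset\Sigma^-_{\mathfrak X}$ of Theorem~\ref{thm:shadowing-usc}. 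By contraposition of Theorem~\ref{thm:shadowing-usc}, $R_{\mathfrak X}$ fails concordant shadowing, which gives (i).

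A direct argument is also available. Take a word that is inadmissible only because it exceeds the expansion of $1$ at a late position $m$. Build a $(\underline a,\delta)$-pseudo-orbit by letting a point sit just beyond $\beta-1$ (within $\delta$) at the moment $f_2$ is applied. No genuine orbit with that code exists, and $I(\underline a)$ is far from $\pi_0(\Gamma_{\underline a,\delta})$, so condition (B2) of Theorem~\ref{thm:Y} fails.

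The main obstacle is the precise identification of $\Sigma^-_{\mathfrak X}$ with the reversed $\beta$-shift. This requires checking that the maximal invariant set $A_{\mathfrak X}=[0,1]$ and that endpoints do not add or remove codes. It also requires matching the orientation conventions of $\Sigma^-$ and the composition $f^k_{\underline a}$. I will handle this by showing directly that $x\in X_{a}$ for the compositions if and only if the digit string is lexicographically dominated by the (quasi-greedy) expansion of $1$, which is Parry's criterion.
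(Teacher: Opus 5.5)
Your proof is correct, but it follows a different route from the paper's.

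\textbf{How the paper argues.} The paper takes $4<\beta<5$ and keeps only $f_0,f_2,f_4$, so that the images $[0,1/\beta]$, $[2/\beta,3/\beta]$, $[4/\beta,1]$ are pairwise disjoint. It then builds an explicit non-Parry $\beta$ with $d_\beta(1)=4\,0^{n_1}4\,0^{n_2}\cdots$, which makes $\Sigma_\beta\cap\{0,2,4\}^{\mathbb N}$ non-SFT. It gets (ii) from Proposition~\ref{prop:nonstable-nonparry}: nearby simple Parry numbers give SFTs, while $\Sigma_{\beta_0}$ is not sofic. It gets (iv) from the coded-shift property. From Walters' theorem it only extracts the failure of \emph{negative} shadowing, so (i) is left essentially implicit.

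\textbf{How your route differs.} You keep the full two-branch system and use strict monotonicity of $\beta\mapsto\Sigma_\beta$ instead. This has three advantages.
\begin{itemize}
\item It is simpler, and it shows that every member of the family is combinatorially unstable, so non-soficity is not the mechanism.
\item Because the upward perturbation $\beta'>\beta$ strictly enlarges the code space, it yields (i) cleanly by contraposition of Theorem~\ref{thm:shadowing-usc}.
\item In your setting that theorem's proof has no technical snags: $A_{\mathfrak X}=X_1=X=[0,1]$, so every pseudo-orbit lies in the attractor and finite orbits extend by $f_1$.
\end{itemize}
The non-simple-Parry hypothesis enters only in identifying the code space and in (iv). Both maps are increasing and both domains are intervals $[0,t]$, so admissibility of a word can be tested at the point $0$. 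Reading the symbols $1,2$ as digits $0,1$, this gives
$$\iota(\Sigma^-_{\mathfrak X})=\{c:\ \mathrm{val}_\beta(\sigma^k c)\le 1 \text{ for all } k\},\qquad \mathrm{val}_\beta(c)=\sum_k c_k\beta^{-k}.$$
This set equals $\Sigma_\beta$ exactly when $d_\beta(1)$ is infinite. For a simple Parry $\beta$ it also contains $d_\beta(1)$ itself, which is not a limit of periodic codes, so (iv) would fail.

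\textbf{Points that need fixing.}
\begin{itemize}
\item Your open set $(0,1)$ does not give (iii), because $\beta-1\in(0,1)$ and $f_2(\beta-1)=1$. Take $U=(0,1]$, which is open in $X=[0,1]$.
\item More substantively, your images touch at $1/\beta=f_1(1)=f_2(0)$, so the code map is not injective. The paper effectively uses the OSC as an open strong-separation condition with unique codes; see the proofs of Proposition~\ref{thm:criterion-combinatorial-stability} and Theorem~\ref{prop:shadowingIFSG2}. That is why it discards the digits $1,3$.
\item If (iii) is meant in that stronger sense, you must work with a separated subsystem. Then instability no longer follows directly from Parry's monotonicity, because intersecting with $\{0,2,4\}^{\mathbb N}$ could hide the difference, and you need an explicit witness. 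For example, in the paper's $d_\beta(1)$, replace the last $0$ of a block $0^{n_i}$ ($i\ge 2$, large depending on $\beta'$) by $2$ and continue with $0^\infty$. This gives a sequence in $\Sigma_{\beta'}\setminus\Sigma_\beta$ for $\beta'>\beta$ close to $\beta$.
\item In your direct argument, the pseudo-orbit point must be placed inside $X_2$ (at $\beta-1$), with the true image lying just beyond it.
\item Also in the direct argument: no admissible orbit follows $\underline a$, so $\underline a\notin\Sigma_\infty$ and $I(\underline a)=\emptyset$. Shadowing therefore fails outright; (B2) concerns only sequences in $\Sigma_\infty$ and is not the relevant criterion.
\end{itemize}
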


We highlight that, in the proof of previous theorem, we construct first examples of combinatorially unstable local IFSs. Such examples are derived from $\beta$-transformations, for certain special parameters $\beta$.

\begin{remark}
    \label{rmk:graph-directed-combin-stable}
    On the positive direction, Proposition~\ref{thm:criterion-combinatorial-stability} offers a criterion, combining the open set condition and a Markov type relation, to ensure combinatorial stability of local IFSs. This is applicable to graph-directed IFSs, hence the latter are combinatorially stable. 
\end{remark}

 %%%%%%%%%%%%%%%%%%%%%%%%%%%%%%%%%%%%%%%%%%%
\subsection{Topological stability}

Although the notion of combinatorial stability allows one to distinguish local IFSs coded by shift-invariant subsets that are not topologically conjugate, it is easy to construct examples of local IFSs sharing the same code space while exhibiting substantially different orbit structures on their attractors. This motivates a notion of stability formulated in terms of the existence of topological conjugacies on the space of orbits.
In order to formulate thus notion we recall that the dynamics of points in the local attractor are described by a bilateral shift-invariant subset $\widehat{\Sigma}_{\mathfrak X}\subset \Sigma^-_{\mathfrak X}\times \Sigma^+$  and by its projection onto the positive coordinates, which we denote by $\widehat{\Sigma}_{\mathfrak X}^+$ (cf. Theorem~\ref{thmC}). 

\begin{definition}
\label{def:conjugated}
    Let $X$ be a compact metric space. We say the local IFSs $R_{\mathfrak X}=(X_j, f_{j})_{1\le j \le n}$ and $R_{\mathfrak Y}=(Y_j, g_{j})_{1\le j \le n}$ are \emph{topologically conjugate}\index{Topologically conjugate} if there exists an homeomorphism $\tau: \widehat \Sigma^+_{\mathfrak X} \to \widehat \Sigma^+_{\mathfrak Y}$ and for every $\underline a= (a_j)_{j\in \mathbb N} \in \widehat \Sigma^+_{\mathfrak X}$ there exists an homeomorphism $h_{\underline a}: A_{\mathfrak X} \to A_{\mathfrak Y}$  so that 
    \begin{equation}
        \label{eq:conjtopconj}
        h_{\underline a}\circ f_{a_k}\circ\dots  \circ f_{a_1} \circ f_{a_0}
= g_{\tau(\underline a)_k}\circ \dots \circ g_{\tau(\underline a)_1}\circ  g_{\tau(\underline a)_0} \circ h_{\underline a}.
    \end{equation}
\end{definition}

Note that the previous notion of conjugacy assumes implicitly that the combinatorial structure of both attractors $A_{\mathfrak X}$ and $A_{\mathfrak Y}$ coincide (up to homeomorphism) and it concerns the 
stability of points points with infinite orbits. 

We are now in a position to define the concept of topological stability.

\begin{definition}
    {Let $(X,d)$ be a compact metric space and let $R_{\mathfrak X}=(X_i,f_i)_{1\le i \le n}$ be a local IFS. Given $*\in\{C,0\}$, we say that the local IFS $R_{\mathfrak X}$ is \emph{topologically stable in $\mathcal F_*$}  \index{Topologically stable} if there exists an open neighborhood $\mathcal V$ of $R_{\mathfrak X}$ in the metric space $(\mathcal F^{(n)}_*, D)$  
    so that $R_{\mathfrak Y}$
    is topologically conjugate to $R_{\mathfrak X}$ for every $ R_{\mathfrak Y} \in \mathcal V$.}
\end{definition}

Some comments are in order. The previous notion is stronger than the concept of topological stability introduced by Arbieto and Trilles \cite{AT25} for global IFSs. In the expansive setting considered there, the shadowing point is unique, whereas in the contractive setting this is generally not the case. Moreover, while \cite{AT25} requires only that the orbits of nearby systems remain uniformly close up to a $C^0$ change of coordinates near the identity, our formulation requires the existence of a homeomorphism that conjugates their orbits. 

Our next result ensures that, under the combinatorial stability assumption, the hyperbolicity of the local IFS is sufficient to imply its topological stability. 

\begin{maintheorem}
\label{thm:combinatorial-topological}
    Let $(X,d)$ be a compact metric space and assume that $R_{\mathfrak X}=(X_i,f_i)_{1\le i \le n}$ is a combinatorially stable local IFS. If $R_{\mathfrak X}$ is contractive and satisfies the open set condition then $R_{\mathfrak X}$ is topologically stable in $\mathcal F_C$.
\end{maintheorem}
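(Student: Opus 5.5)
Since $R_{\mathfrak X}$ is combinatorially stable, there is a neighborhood $\mathcal V_0$ of $R_{\mathfrak X}$ in $(\mathcal F_C^{(n)},D)$ such that $\Sigma^-_{\mathfrak Y}=\Sigma^-_{\mathfrak X}$ for every $R_{\mathfrak Y}\in\mathcal V_0$. The idea is to build the conjugacy through the coding maps. For a contractive local IFS, the code map $\pi_{\mathfrak Y}:\Sigma^-_{\mathfrak Y}\to A_{\mathfrak Y}$,
\[
\pi_{\mathfrak Y}(\dots b_{-2}b_{-1})=\lim_{k\to\infty} g_{b_{-1}}\circ\cdots\circ g_{b_{-k}}(y),
\]
is a continuous surjection, independent of the base point $y$ taken in the appropriate nested domains. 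I would define $h:=\pi_{\mathfrak Y}\circ\pi_{\mathfrak X}^{-1}:A_{\mathfrak X}\to A_{\mathfrak Y}$, using that the OSC makes $\pi_{\mathfrak X}$ a homeomorphism. I would take $h_{\underline a}=h$ for every $\underline a$ and $\tau=\mathrm{id}$.

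The first step is to check that $\widehat\Sigma_{\mathfrak X}=\widehat\Sigma_{\mathfrak Y}$, so that $\tau=\mathrm{id}$ is a homeomorphism $\widehat\Sigma^+_{\mathfrak X}\to\widehat\Sigma^+_{\mathfrak Y}$. By Theorem~\ref{thmC}, $\widehat\Sigma_{\mathfrak X}$ consists of concatenations $(\underline b,\underline a)$ with $\underline b\in\Sigma^-_{\mathfrak X}$ whose shifted pasts $\dots b_{-1}a_0\dots a_{k}$ all stay in $\Sigma^-_{\mathfrak X}$. So this set is determined by $\Sigma^-_{\mathfrak X}$ alone, and equality of the code spaces gives equality of the bilateral sets.

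The second step is the conjugacy relation~\eqref{eq:conjtopconj}. If $x=\pi_{\mathfrak X}(\underline b)\in X_{a_0}$ and the composition is admissible, then
\[
f_{a_0}(x)=\pi_{\mathfrak X}(\underline b a_0),
\]
and likewise $g_{a_0}(\pi_{\mathfrak Y}(\underline b))=\pi_{\mathfrak Y}(\underline b a_0)$, provided $\underline b a_0\in\Sigma^-_{\mathfrak Y}$. That proviso holds because the code spaces coincide. Iterating gives $h\circ f^{k+1}_{\underline a}=g^{k+1}_{\underline a}\circ h$ on the points where the composition is defined. One subtle point is that $x\in X_{a_0}$ must imply $h(x)\in Y_{a_0}$. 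I would get this from the description of admissibility through codes: $x\in X_{a_0}\cap A_{\mathfrak X}$ exactly when $\underline b a_0\in\Sigma^-_{\mathfrak X}$. That equivalence would need to be extracted from the preliminaries, possibly with a mild OSC argument.

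The third step, which I expect to be the main obstacle, is showing that $h$ is a homeomorphism. Continuity and surjectivity are immediate. Injectivity is equivalent to $\pi_{\mathfrak Y}$ being injective, so the OSC must persist under $D$-small perturbations in $\mathcal F_C$, and this is not automatic. My first attempt would exploit the fact that in the OSC setting distinct cylinders $[\underline b]$ and $[\underline b']$ of $\Sigma^-_{\mathfrak X}$ have disjoint images. Fix the depth-$m$ cylinders, with $m$ large enough that contraction makes their images small. Under the $D$-topology the maps $g_j$ are uniformly close to $f_j\circ\zeta^{-1}$, so the depth-$m$ cylinder images of $R_{\mathfrak Y}$ lie in small neighbourhoods of those of $R_{\mathfrak X}$. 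If the $\mathfrak X$-cylinder images at a fixed depth are pairwise at positive distance, injectivity follows by induction on depth using contraction. If OSC only gives disjoint interiors, I would instead shrink the neighbourhood $\mathcal V$ and combine the uniform continuity of $\pi_{\mathfrak X}^{-1}$ with the fact that the perturbed system sees the same code space, arguing that a collision $\pi_{\mathfrak Y}(\underline b)=\pi_{\mathfrak Y}(\underline b')$ would create a new admissible word and change $\Sigma^-_{\mathfrak Y}$, contradicting combinatorial stability. Once injectivity is established, $h$ is a continuous bijection between compact spaces and hence a homeomorphism. The same construction applies to every $R_{\mathfrak Y}\in\mathcal V$, which gives topological stability in $\mathcal F_C$.
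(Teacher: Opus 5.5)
Your construction matches the paper's up to inversion. The paper builds $h_{\underline a}$ through concordant shadowing, which is not a hypothesis of the theorem, but it comes out as $\pi_{\mathfrak X}\circ\pi_{\mathfrak Y}^{-1}$ with $\tau=\mathrm{id}$. Equality of the bilateral shifts is Lemma~\ref{le:past-2-future}, and the conjugacy relation is checked through codes. Your Steps 1 and 2 are sound. The domain equivalence you flag needs no OSC: the sets $V_{[\underline b]_k}$ are nested with diameters at most $\lambda^k\diam(X)$, and $X_{a_0}$ is closed. Hence $\pi_{\mathfrak X}(\underline b)\in X_{a_0}$ iff $V_{[\underline b]_k}\cap X_{a_0}\neq\emptyset$ for all $k$ iff $\underline b\,a_0\in\Sigma^-_{\mathfrak X}$.

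The genuine gap is Step 3, the injectivity of $\pi_{\mathfrak Y}$, and neither of your arguments closes it. In the induction on depth, two words that agree in their last $m$ symbols must have their separation pushed through $g_{b_{-1}}$ (or the last $m$ maps). That requires injectivity there. Contraction only bounds diameters from above and gives no lower bound on separation, and elements of $\mathcal F_C$ are only required to be contractions. The appeal to combinatorial stability fails outright. Admissibility of $\underline b\,j$ depends only on whether the point $\pi_{\mathfrak Y}(\underline b)$ lies in $Y_j$. So a collision $\pi_{\mathfrak Y}(\underline b)=\pi_{\mathfrak Y}(\underline b')$ creates no new word and is invisible to $\Sigma^-_{\mathfrak Y}$.

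The paper closes this step only by asserting that, after shrinking $\mathcal V$, every $R_{\mathfrak Y}$ still satisfies the OSC, and then applying Theorem~\ref{thm:classification} to $R_{\mathfrak Y}$. Your suspicion that this is not automatic is justified: the step cannot follow from the stated hypotheses with the $C^0$-type metric $D$. Take $X=[-1,2]$, $X_1=X_2=[0,1]$, $f_1(x)=x/4+1/8$ and $f_2(x)=x/4+5/8$. This system is contractive with disjoint images $[1/8,3/8]$ and $[5/8,7/8]$. It is combinatorially stable: for $D$-close systems the $Y_i$ are intervals near $[0,1]$ containing $g_1(Y_1)\cup g_2(Y_2)$, so every word stays admissible.

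Now let $g_2=f_2$ and $g_1=f_1\circ\phi$, where $\phi$ is the identity off $[5/6-3\eta,5/6+3\eta]$, equals $5/6$ on $[5/6-\eta,5/6+\eta]$, and is affine in between. Then $\phi$ is $3/2$-Lipschitz and $\eta$-close to the identity, so $g_1$ is a contraction and $D(R_{\mathfrak X},R_{\mathfrak Y})\le\eta/4$. However, $5/6$ is the fixed point of $g_2$ and $1/3=g_1(5/6)$. So for large $m$ the distinct points $5/6$ and $g_2^m(1/3)$ of $A_{\mathfrak Y}$ are both sent by $g_1$ to $1/3$.

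By contrast, take any $\underline a$ with $\tau(\underline a)_0=1$; it exists since $\tau$ is onto $\widehat\Sigma^+_{\mathfrak Y}=\{1,2\}^{\mathbb N}$. The left-hand side of \eqref{eq:conjtopconj} is then injective on $A_{\mathfrak X}$, which forces $g_1$ to be injective on $A_{\mathfrak Y}$. So no conjugacy exists.

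What is missing is an extra open hypothesis guaranteeing injective coding for all nearby systems. One example is restricting to injective maps with pairwise disjoint images $f_i(X_i)$, a condition that survives $D$-small perturbations. Under that hypothesis your separation argument already works at depth one. If $\underline b,\underline b'$ first differ at position $-j$, the points reached before the last $j-1$ maps lie in the disjoint sets $g_{b_{-j}}(Y_{b_{-j}})$ and $g_{b'_{-j}}(Y_{b'_{-j}})$. The injective composition $g_{b_{-1}}\circ\cdots\circ g_{b_{-j+1}}$ then keeps them apart.
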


As contractive IFSs satisfy the concordant shadowing property \cite{GV06}, the previous result 
implies that all contractive IFSs satisfying the open set condition are topologically stable (see Subsection~\ref{sec:top-stab-IFS} for the proof), a fact that seems not to have been proven before.

\begin{maintheorem}
\label{thm:concordantshadowing-implies-stable}
Let $(X,d)$ be a compact manifold of dimension  $\dim X\ge 3$ and that $X_j\subset X$ satisfy $X_j=\overline{\interior(X_j)}$ for every $1\le j\le n$.
Assume that $R_{\mathfrak X}= \left(X_j, f_j \right)_{1\le j \le n}$ is a contractive local IFS which is combinatorial stable and topologically stable in $\mathcal F_0$. Then 
$R_{\mathfrak X}$  satisfies the concordant shadowing property.
\end{maintheorem}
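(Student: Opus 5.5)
This is the manifold analogue of Walters' classical argument that topological stability implies shadowing. Given a finite $(\underline a,\delta)$-pseudo-orbit in $A_{\mathfrak X}$, we build a $C^0$-small perturbation $R_{\mathfrak Y}$ for which the pseudo-orbit becomes a genuine orbit. Topological stability then provides a conjugacy, and transporting the orbit back gives a true $\underline a$-orbit of $R_{\mathfrak X}$ that stays close.

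\textbf{Step 1 (finite pseudo-orbits suffice).} By compactness of $A_{\mathfrak X}$ and continuity of the maps, it is enough to $(\underline a,\vep)$-shadow finite pseudo-orbits $(x_0,\dots,x_N)$ with $\vep$-control independent of $N$. A diagonal limit of shadowing points $y_N\in I$-type sets, using that $I(\underline a)$ is compact and that admissible compositions form closed conditions, then yields an infinite shadow.

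\textbf{Step 2 (local surgery).} Fix $\vep>0$ and take $\mathcal V$ from topological stability in $\mathcal F_0$, shrunk so that its members also have code space $\Sigma^-_{\mathfrak X}$ by combinatorial stability. Choose $\delta$ small compared with the $D$-radius of $\mathcal V$. Since $\dim X\ge 3$, after an arbitrarily small perturbation of the pseudo-orbit we may assume the points $x_k$ are pairwise distinct. We may also assume they avoid one another's perturbation balls and that the points $f_{a_k}(x_k)$ are distinct. This is where dimension at least three is used: generic finite configurations can be separated and joined by disjoint arcs.

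Inside small disjoint balls $B_k\ni f_{a_k}(x_k)$ we take homeomorphisms $\phi_k$ supported in $B_k$, with $\phi_k(f_{a_k}(x_k))=x_{k+1}$ and $d_{C^0}(\phi_k,\mathrm{id})<2\delta$. These are the standard push maps along short arcs, and $X_j=\overline{\interior X_j}$ lets us keep them compatible with the domains. Set $g_{a_k}=\phi\circ f_{a_k}$, where $\phi$ is the composition of the $\phi_k$, with unchanged domains $Y_j=X_j$. Then $D(R_{\mathfrak X},R_{\mathfrak Y})\lesssim\delta$, and $(x_k)$ is a genuine $\underline a$-orbit of $R_{\mathfrak Y}$. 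Note that $g_j$ need not be contractive, which is exactly why stability in $\mathcal F_0$ is assumed.

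\textbf{Step 3 (transport back).} Topological stability gives a symbolic homeomorphism $\tau$ and conjugacies $h_{\underline a}$ satisfying \eqref{eq:conjtopconj}. We must show $\tau(\underline a)=\underline a$ and that $h_{\underline a}$ is $\vep$-close to the identity. For the symbolic part, combinatorial stability makes the code spaces equal; choosing $\tau$ to be the identity follows from continuity of the conjugacy in the perturbation, via the path $t\mapsto R_{\mathfrak Y_t}$ from $R_{\mathfrak X}$ to $R_{\mathfrak Y}$ given by the isotopies $\phi_k^t$. For the geometric part, a nearness estimate $d(h_{\underline a},\mathrm{id})<\vep$ would come from the construction of conjugacies via the coding map, together with OSC-type uniqueness of codes and equicontinuity of $\pi_{\mathfrak Y}$ over $\mathcal V$. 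Then $x=h_{\underline a}^{-1}(x_0)$ satisfies $d(f^k_{\underline a}(x),x_k)<\vep$.

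\textbf{Main obstacle.} Step 3 is the hardest, because Definition~\ref{def:conjugated} does not itself give closeness of $h_{\underline a}$ to the identity. I would first try to extract this from the proof of Theorem~\ref{thm:combinatorial-topological}, where the conjugacy is presumably $\pi_{\mathfrak Y}\circ\pi_{\mathfrak X}^{-1}$ and is therefore uniformly close to the identity when the systems are close. Failing that, I would restrict the definition of the neighborhood $\mathcal V$ accordingly.

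Step 2 carries a secondary difficulty: the perturbation must keep $x_k\in Y_{a_k}$ and must not destroy the attractor. Both are handled by supporting $\phi$ inside the interiors of the domains.
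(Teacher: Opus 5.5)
Your route is the paper's: reduce to finite concordant shadowing (Lemma~\ref{le:finiteshadowequiv}), realize a finite pseudo-orbit as an exact $\underline a$-orbit of a $C^0$-small, generally non-contractive perturbation in $\mathcal F_0$, and then invoke topological stability. The perturbation is obtained by post-composing with push maps supported near pairwise disjoint short curves, which is where $\dim X\ge 3$ enters.

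The genuine gap is Step~3, and it is not a technicality. Definition~\ref{def:conjugated} only provides some homeomorphism $\tau$ and some homeomorphisms $h_{\underline a}$. Topological stability in $\mathcal F_0$ only asserts that such a conjugacy exists for each nearby system. Nothing gives $\tau(\underline a)=\underline a$ or any bound on $d_{C^0}(h_{\underline a},\mathrm{id})$. Even granting $\tau=\mathrm{id}$ and $x_0\in A_{\mathfrak Y}$, the point $y=h_{\underline a}^{-1}(x_0)$ only satisfies $f^k_{\underline a}(y)=h_{\underline a}^{-1}(x_k)$. This shadows $(x_k)$ exactly when $h_{\underline a}^{-1}$ moves the $x_k$ by less than $\vep$. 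None of your remedies supplies this:
\begin{itemize}
\item Equality of code spaces does not force $\tau=\mathrm{id}$. It only makes $\tau$ an arbitrary self-homeomorphism of $\widehat\Sigma^+_{\mathfrak X}$, not even required to commute with the shift.
\item Nothing in the hypotheses gives continuity or uniqueness of conjugacies along a path $t\mapsto R_{\mathfrak Y_t}$.
\item The coding conjugacy $\pi_{\mathfrak X}\circ\pi_{\mathfrak Y}^{-1}$ from the proof of Theorem~\ref{thm:combinatorial-topological} needs the open set condition, which is not assumed here. It also needs contractivity of $R_{\mathfrak Y}$, which your $g_j=\phi\circ f_j$ lack, so $\pi_{\mathfrak Y}$ need not even be point-valued. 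In any case the hypothesis hands you an arbitrary conjugacy, not that one.
\item There is no evident way to extract closeness from \eqref{eq:conjtopconj} itself. Along a single $\underline a$ it only relates values of $h_{\underline a}$ at points to values at their $f^k_{\underline a}$-images, which lie in sets of diameter at most $\lambda^k\diam(X)$.
\end{itemize}

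The paper's proof (run by contradiction with $\delta_m\to 0$) reaches exactly this point. It passes directly from the conjugacy relation and $(g^{(m)}_{\underline a^{(m)}})^k(x_0^{(m)})=x_k^{(m)}$ to a point $y\in A_{\mathfrak X}$ with $f_{a_k^{(m)}}\circ\cdots\circ f_{a_0^{(m)}}(y)=x_{k+1}^{(m)}$. This tacitly takes $\tau=\mathrm{id}$ and assumes $h_{\underline a}$ fixes the points $x_k^{(m)}$, so there is no argument there to borrow.

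What would close the proof is a Walters-type hypothesis: for every $\vep>0$, a neighborhood of $R_{\mathfrak X}$ on which the conjugacy can be chosen with $\tau=\mathrm{id}$ and $\sup_{\underline a} d_{C^0}(h_{\underline a},\mathrm{id})<\vep$. With it, Step~3 becomes one line, provided you also verify $x_0\in A_{\mathfrak Y}$. That is not automatic, particularly after moving the $x_k$ to make them distinct. Your fallback of ``restricting $\mathcal V$ accordingly'' is precisely this strengthening of the hypothesis, not a derivation from the stated one.

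A smaller point in Step~2: disjoint balls $B_k$ each containing both $f_{a_k}(x_k)$ and $x_{k+1}$ cannot be arranged once the pseudo-orbit returns near itself. The supports must instead be thin neighborhoods of pairwise disjoint arcs of length $<\delta$, as with the paper's curves $\ell_{m,k}$. With that change, a single $\phi$ and $g_j=\phi\circ f_j$ do work.
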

\color{black}

It remains an open question whether Theorem~\ref{thm:concordantshadowing-implies-stable} can be extended to local IFSs defined on more general metric spaces. The assumption that $X$ is a manifold (possibly with boundary) is crucial for the perturbation machinery employed in the proof.
We stress that, combining Theorems~\ref{thm:combinatorial-topological} and ~\ref{thm:concordantshadowing-implies-stable}  we obtain the following characterization of topological stability among the space of combinatorially stable, contractive local IFSs on manifolds.

\begin{maincorollary}
\label{cor:stable-iff-concordantshadowing}
Let $(X,d)$ be a compact manifold of dimension  $\dim X\ge 3$, $X_j\subset X$ 
satisfy $X_j=\overline{\interior(X_j)}$ for every $1\le j\le n$, and
$R_{\mathfrak X}= \left(X_j, f_j \right)_{1\le j \le n}$ be a combinatorially stable contractive local IFS. Then
$R_{\mathfrak X}$ is topologically stable if and only if it  satisfies the concordant shadowing property.
\end{maincorollary}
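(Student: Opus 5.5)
This corollary should follow by combining Theorems~\ref{thm:combinatorial-topological} and~\ref{thm:concordantshadowing-implies-stable}. Throughout, $R_{\mathfrak X}$ is combinatorially stable and contractive, $X$ is a compact manifold with $\dim X\ge 3$, and $X_j=\overline{\interior(X_j)}$. The one point needing care is to fix in which class ($\mathcal F_C$ or $\mathcal F_0$) "topologically stable" is meant, and to check that both implications use hypotheses we actually have.

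\emph{Topological stability implies concordant shadowing.} This is exactly Theorem~\ref{thm:concordantshadowing-implies-stable}, whose hypotheses are that $X$ is a compact manifold of dimension at least three, the regularity $X_j=\overline{\interior(X_j)}$, contractivity, combinatorial stability, and topological stability in $\mathcal F_0$. So with topological stability read in $\mathcal F_0$, this direction is immediate. If it is read in $\mathcal F_C$, I would check whether the perturbations built in the proof of Theorem~\ref{thm:concordantshadowing-implies-stable} can be chosen to remain contractions. They should be: they are small $C^0$ modifications localized near a pseudo-orbit, and contractivity is an open-type condition once the Lipschitz constants of the bump modifications are controlled. The corollary should then be stated with the matching class.

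\emph{Concordant shadowing implies topological stability.} Here the goal is to invoke Theorem~\ref{thm:combinatorial-topological}, which needs combinatorial stability (assumed), contractivity (assumed) and the open set condition. I expect the main obstacle to be that the OSC is not listed among the corollary's hypotheses. My first attempt would be to derive the OSC from the remaining assumptions, but I see no mechanism: concordant shadowing constrains orbits, not overlaps of the images $f_j(X_j)$. So I would read the corollary as implicitly carrying the standing OSC assumption of this part of the paper (as the introduction says, "under the OSC"), and state that assumption explicitly. With the OSC in hand, Theorem~\ref{thm:combinatorial-topological} gives topological stability in $\mathcal F_C$. Concordant shadowing is not even used in this direction; it matters only for the converse.

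\emph{Combining the two directions.} Together these give the equivalence. The only delicate bookkeeping is matching the class $\mathcal F_C$ versus $\mathcal F_0$. Restricting to contractive perturbations, i.e.\ working with the neighbourhood $\mathcal V\cap\mathcal F_C^{(n)}$, and checking that the conjugacy-destroying perturbations in Theorem~\ref{thm:concordantshadowing-implies-stable} can be taken contractive, would make both implications refer to the same notion of topological stability.
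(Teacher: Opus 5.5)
Your route is the paper's own: the corollary is just Theorem~\ref{thm:combinatorial-topological} set next to Theorem~\ref{thm:concordantshadowing-implies-stable}. You are right that the open set condition has to be added as a hypothesis; the paper's one-line derivation uses it silently, in line with ``within this class'' in the introduction. The gap is in the step you call bookkeeping, namely making both implications refer to one notion of topological stability. The two theorems do not give this, and your proposed fix, that the perturbations in the proof of Theorem~\ref{thm:concordantshadowing-implies-stable} can be taken contractive, is unproved and, as described, does not work.

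\emph{The $\mathcal F_0$ reading fails.} Theorem~\ref{thm:combinatorial-topological} only treats contractive perturbations, and ``concordant shadowing $\Rightarrow$ stability in $\mathcal F_0$'' is false. Take two contractions of $\mathbb T^3$ with disjoint images. This global contractive IFS has concordant shadowing and satisfies the OSC. It is combinatorially stable, since every $D$-close system has domains homeomorphic to $\mathbb T^3$ inside $\mathbb T^3$, hence equal to $\mathbb T^3$. Now replace $f_1$ by a $C^0$-close continuous map equal to the identity on a small ball around its fixed point. The new local attractor contains that ball, so it is not homeomorphic to the Cantor set $A_{\mathfrak X}$.

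\emph{The $\mathcal F_C$ reading is not covered by Theorem~\ref{thm:concordantshadowing-implies-stable}.} This reading needs ``stability in $\mathcal F_C$ $\Rightarrow$ concordant shadowing'', and Theorem~\ref{thm:concordantshadowing-implies-stable} assumes stability in the larger class $\mathcal F_0$. Your claim that its perturbations ``should'' stay contractive does not hold up, for two reasons.
\begin{enumerate}
\item A bump $h_{m,k}$ with support of width $w$ that moves $f_{a_k^{(m)}}(x_k^{(m)})$ a distance $s$ has Lipschitz constant at least about $s/w$. So the bound $\mathrm{Lip}(h_{m,k}\circ f_{a_k^{(m)}})\le\lambda\,\mathrm{Lip}(h_{m,k})<1$ requires $w\gtrsim\lambda s$. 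Supports that thick cannot be kept pairwise disjoint and away from the other curves once the pseudo-orbit revisits a small ball.
\item More fundamentally, a $\lambda'$-contraction $g_a$, $\lambda'<1$, with $g_a(x_k^{(m)})=x_{k+1}^{(m)}$ for all $k$ with $a_k^{(m)}=a$ exists only if $d(x_{k+1}^{(m)},x_{j+1}^{(m)})\le\lambda' d(x_k^{(m)},x_j^{(m)})$ whenever $a_k^{(m)}=a_j^{(m)}=a$. Pseudo-orbits need not satisfy this. Moving coincident points slightly apart, as the proof does, makes this ratio worse, not better.
\end{enumerate}

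So your argument, like the paper's juxtaposition, proves only a mixed statement. Under the OSC, combinatorial stability and contraction give topological stability in $\mathcal F_C$, and topological stability in $\mathcal F_0$ gives concordant shadowing. An equivalence for a single class needs a genuinely different proof of the $\mathcal F_C$ implication.
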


\subsection{Graph-directed IFSs} 

We finish this section with an application to graph directed IFSs \cite{MW88}, building over the fact that every graph-directed IFS can be realized as a contractive local IFS on an appropriate enriched compact metric space \cite{OV25a} as we now detail.
Let $(X,d)$ be a compact metric space and let
$(G, (X_v)_{v\in V}, (f_e)_{e \in E})$ 
be a contractive graph-directed IFS on $X$.
In \cite{OV25a} we proved that the latter is modeled by the contractive local IFS on the space $Y := X \times V$ 
(endowed with the metric
$d_Y\bigl((x,v),(y,w)\bigr) = d(x,y) + \mathbf{1}_{\{v\neq w\}}$) defined by
\begin{equation}
    \label{eq:gdext}
g_e : Y \to Y,
\qquad
g_e(x,v) = \bigl(f_e(x), i(e)\bigr)
\end{equation}
where
\begin{equation}
    \label{eq:gdext2}
D_e= X_{t(e)} \times \{t(e)\} \subset Y
\end{equation}
for each $e\in E$.
Moreover, every attractor of a contractive graph-directed IFS 
$$(G, (X_v)_{v\in V}, (f_e)_{e \in E})$$ 
on a compact metric space $(X,d)$ 
arises from a contractive local IFS defined on a compact subset of an enriched compact metric space $Y$ (cf. \cite[Theorem~G]{OV25a}).
Moreover, every graph-directed IFSs, so a
associated local IFS $\mathcal R_{\mathcal G}$ is combinatorially stable (cf. Remark~\ref{rmk:graph-directed-combin-stable}
and Proposition~\ref{thm:criterion-combinatorial-stability}).
Since $\mathcal R_{\mathcal G}$ is contractive then we derive the following immediate consequence from our previous stability results.

\begin{maincorollary}
\label{cor:graphdirected}
    Let $(X,d)$ be a compact metric space and let $\mathcal G=(G,(X_v)_{v\in V},(f_e)_{e\in E})$
be a contractive graph-directed IFS. If the associated contractive local IFS
$\mathcal R_{\mathcal G}=(D_e,g_e)_{e\in E}$
given by ~\eqref{eq:gdext} and \eqref{eq:gdext2}
satisfies the open set condition then $\mathcal R_{\mathcal G}$ is combinatorially stable and topologically stable in $\mathcal F_C$.
\end{maincorollary}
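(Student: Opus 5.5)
The corollary should follow by assembling two earlier results: Proposition~\ref{thm:criterion-combinatorial-stability}, which gives combinatorial stability, and Theorem~\ref{thm:combinatorial-topological}, which upgrades it to topological stability. The plan is to check that the hypotheses of both apply to $\mathcal R_{\mathcal G}=(D_e,g_e)_{e\in E}$ on the enriched space $(Y,d_Y)$.

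First, contractivity and compactness. $Y=X\times V$ with $d_Y((x,v),(y,w))=d(x,y)+\mathbf 1_{\{v\neq w\}}$ is compact, being a finite disjoint union of copies of $X$. Each domain $D_e=X_{t(e)}\times\{t(e)\}$ is closed. On $D_e$ the second coordinate is constant, so
\[
d_Y(g_e(x,t(e)),g_e(y,t(e)))=d(f_e(x),f_e(y))\le\lambda_e\, d(x,y)=\lambda_e\, d_Y((x,t(e)),(y,t(e))).
\]
Hence $\mathcal R_{\mathcal G}\in\mathcal F_C$ is contractive, and the OSC is assumed.

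Next, combinatorial stability. Following Remark~\ref{rmk:graph-directed-combin-stable}, I would verify the Markov-type hypothesis of Proposition~\ref{thm:criterion-combinatorial-stability}. Admissibility of a composition $g_{e_k}\circ g_{e_{k-1}}$ is decided purely by the graph: the image $g_{e_{k-1}}(D_{e_{k-1}})$ lies in the vertex layer $X\times\{i(e_{k-1})\}$, so it can meet $D_{e_k}$ only when $t(e_k)=i(e_{k-1})$. In that case it lies in $f_{e_{k-1}}(X_{t(e_{k-1})})\subset X_{i(e_{k-1})}$ (graph-directed invariance), so it is contained in $D_{e_k}$.

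For perturbations, the key geometric point is that the vertex layers are at mutual $d_Y$-distance at least $1$. Any $R_{\mathfrak Z}$ with $D(R_{\mathfrak Z},\mathcal R_{\mathcal G})<1/3$ therefore has domains and images staying in the same layers, so the transition structure is preserved. The step I expect to be the main obstacle is this one: the containment $g_{e_{k-1}}(D_{e_{k-1}})\subset D_{e_k}$ may fail after perturbation, since perturbed images can leave the perturbed domains. One has to check that the criterion in Proposition~\ref{thm:criterion-combinatorial-stability} really only requires the OSC together with this layer-separation Markov relation, and that it yields $\Sigma^-_{\mathfrak Z}=\Sigma^-_{\mathcal G}$, the edge shift of $G$. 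If the proposition needs a robust form, I would first try to use contraction: nested images of the perturbed maps shrink toward the perturbed layer attractors, which still sit inside the perturbed domains up to small error.

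Finally, with combinatorial stability, contractivity and the OSC in hand, Theorem~\ref{thm:combinatorial-topological} applied on the compact metric space $(Y,d_Y)$ gives that $\mathcal R_{\mathcal G}$ is topologically stable in $\mathcal F_C$. This completes the corollary.
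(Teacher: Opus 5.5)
You follow the same route as the paper: combinatorial stability from Proposition~\ref{thm:criterion-combinatorial-stability} (via Remark~\ref{rmk:graph-directed-combin-stable}), then Theorem~\ref{thm:combinatorial-topological}. Your checks of contractivity and of layer separation are fine; for $D<1/3$ no cross-layer transition can be created.

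The gap is the within-layer part of combinatorial stability, which you leave open. You only show $g_e(D_e)\subset D_{e'}$ when $t(e')=i(e)$. Hypothesis (H2) requires $g_e(D_e)\subset\interior(D_{e'})$. What is really needed is that every admissible word stays admissible when the domains themselves move in Hausdorff distance. Your contraction fallback cannot supply this. Admissibility means the iterates lie exactly in the closed perturbed domains, so nested images converging to a point merely close to a perturbed domain give you nothing.

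In fact this step cannot be closed under the stated hypotheses. Take one vertex with two loops $e_1,e_2$, $X_v=X=[0,1]$, $f_{e_1}(x)=x/3$ and $f_{e_2}(x)=(x+2)/3$. Then $\mathcal R_{\mathcal G}$ is the middle-thirds Cantor IFS, which satisfies the OSC. For $0<\delta<1/4$ set $Z_{e_1}=Z_{e_2}=[\delta,1-\delta]$ and $\tilde g_{e_i}=f_{e_i}|_{[\delta,1-\delta]}$. The affine homeomorphism $\zeta(x)=\delta+(1-2\delta)x$ shows $D(\mathcal R_{\mathcal G},R_{\mathfrak Z})<4\delta$.

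Now suppose $y,\tilde g_{e_1}(y),\dots,\tilde g_{e_1}^{k-1}(y)\in[\delta,1-\delta]$. Then $\delta\le\tilde g_{e_1}^{k-1}(y)\le 3^{-(k-1)}$. Hence for $\underline b=(\dots,e_1,e_1)$ one has $V_{[\underline b]_k}=\emptyset$ for large $k$, so $\underline b\in\Sigma^-_{\mathcal G}\setminus\Sigma^-_{\mathfrak Z}$. Thus $\mathcal R_{\mathcal G}$ is not combinatorially stable, even though it satisfies (H1) and (H2) (here $\interior(Y)=Y$).

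So citing Proposition~\ref{thm:criterion-combinatorial-stability}, as the paper does, passes over exactly the point you flagged. Its perturbation estimate bounds the distance from $g_i(Y_i)$ to $X\setminus Y_j$ through $\dist_H(X\setminus X_j,X\setminus Y_j)$, which is not controlled by $\dist_H(X_j,Y_j)$; in the example $X\setminus X_j=\emptyset$.

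An extra robust Markov hypothesis is needed before your last step can be used. For instance, keep the domains fixed and require each $f_e(X_{t(e)})$ to lie at positive distance from $X\setminus X_{i(e)}$ (automatic if $X_{i(e)}=X$). Then $C^0$-small perturbations of the maps keep every path of $G$ admissible, and layer separation forbids new transitions. The code space is therefore locally constant for such perturbations, and the rest of your argument via Theorem~\ref{thm:combinatorial-topological} can proceed.
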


\begin{remark}
Note that $\mathcal R_{\mathcal G}$ being  topologically stable ensures that for every sufficiently small perturbation there exists a conjugating homeomorphism $H$ between the corresponding attractors. Since the fibers $X \times \{v\}$ are labeled by the vertices and $H$ preserves fibers, it can be writen as $H(x,v) = (h_v(x),v)$ for a family of homeomorphisms $(h_v)_{v\in V}$. The conjugacy relation implies
$h_{i(e)} \circ f_e = \tilde f_e \circ h_{t(e)}$
for every $e \in E$. 
\end{remark}
\color{black}

%%%%%%%%%%%%%%%%%%%%%%%%%%%%%%%%%
\section{Preliminaries}\label{sec:prelim}

\subsection{Code space}

In this section we collect some information obtained in \cite{OV25a}, namely on the code space  
and space of orbits of a contractive local IFS, which can be 
substantially more intricate than the context of IFSs.
Consider the space $\Sigma^-=\{1,2, \dots, n\}^{-\mathbb N}$ endowed with the metric
$$ 
d_-(\underline a, \underline b) = \exp\Big(\,-\sup\{N\ge 1\colon a_j = b_j, \;\text{for every}\; -N\le j \le -1 \}\Big)
$$ 
for every $\underline a, \underline b\in \Sigma^-$, where $\sup \emptyset =0$.

\begin{definition}\label{def: code map}
   Given a local IFS we define the \emph{local code map}\index{Local code map} $\pi_{\mathfrak X}: \Sigma^- \to 2^{X} $ as the set function
   $$\pi_{\mathfrak X}(\underline b)= \bigcap_{k \geq 1} V_{[\underline b]_k} = \lim_{k\to \infty} V_{[\underline b]_k},$$
   where
   $ 
   [\underline b]_k:= (b_{-k}, \dots, b_{-2}, b_{-1})
$ 
for each $\underline b\in \Sigma^-$ and $k\ge 1$,
$$ 
V_{[\underline b]_k} 
 =  f_{b_{-1}} \circ f_{b_{-2}} \circ \dots f_{b_{-k}}(X_{b_{-k}}),
$$  
and $f_{b_j}(A)=f_{b_j}(A\cap X_{b_j})$ for every integer $-k \le j \le -1.$   We define the \emph{local code space}\index{Local code space} as the set
    \begin{equation}
        \label{eq:def-localcodespace}
    \Sigma^-_{\mathfrak X}:=\{\underline b \in \Sigma^- \colon  \pi_{\mathfrak X}(\underline b) \neq \emptyset \} \subseteq \Sigma^-.
    \end{equation}
\end{definition}

The next results show that every contractive local IFS is semiconjugate to a contractive local (standard) IFS on a shift space, describe their code space and extended shift of orbits. 

\begin{theorem}\label{thm:classification}\cite[Theorem~A]{OV25a}
Let $(X,d)$ be a compact metric space and $X_j\subset X$, for $1\le j \le n$. If $R_{\mathfrak X}=(X_j,f_j)_{1\le j \le n}$ is a contractive local IFS then there exists a $\sigma^-$-invariant and compact shift space $\Sigma^-_{\mathfrak X} \subset \Sigma^{-}$, a H\"older continuous surjective map $\pi_{\mathfrak X}: \Sigma^-_{\mathfrak X} \to A_{\mathfrak X}$ and 
subsets $B_j=\pi_{\mathfrak X}^{-1}(X_j)\subset \Sigma^-_{\mathfrak X}$ so that:
\begin{enumerate}
    \item $\Sigma^-_{\mathfrak X}=\{\underline b \in \Sigma^- \colon  \pi_{\mathfrak X}(\underline b) \neq \emptyset \} \subseteq \Sigma^-$;
    \item $f_j\circ \pi(\underline b)=\pi_{\mathfrak X} \circ \tau_j(\underline b)$ for every $\underline b\in B_j$.
\end{enumerate}
In particular $R_{\mathfrak X}$ is semiconjugate to the local IFS $\mathcal S_{\mathfrak X}=(B_j,\tau_j)_{1\le j \le n}$. Moreover, if $R_{\mathfrak X}$  satisfies the open set condition then $\pi_{\mathfrak X}$ is a conjugacy.
\end{theorem}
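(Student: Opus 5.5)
The statement is Theorem~\ref{thm:classification}, quoted from \cite[Theorem~A]{OV25a}, so we only sketch how we would reprove it. The plan is to build the code map from nested images, and then check compactness, invariance, continuity and surjectivity in turn.

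\emph{Step 1: nested images.} For each $\underline b\in\Sigma^-$ the sets $V_{[\underline b]_k}$ are compact, since they are continuous images of compact sets. They are nested: $V_{[\underline b]_{k+1}} = f_{b_{-1}}\circ\cdots\circ f_{b_{-k}}\bigl(f_{b_{-k-1}}(X_{b_{-k-1}})\cap X_{b_{-k}}\bigr)\subseteq V_{[\underline b]_k}$. Contraction gives $\diam V_{[\underline b]_k}\le \lambda^k\diam X$ with $\lambda=\max_j\lambda_j$. Hence $\pi_{\mathfrak X}(\underline b)$ is either empty or a single point, and it is nonempty exactly when every $V_{[\underline b]_k}$ is nonempty.

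\emph{Step 2: the code space is a compact subshift.} Each cylinder condition $V_{[\underline b]_k}\neq\emptyset$ depends only on $k$ coordinates. So $\Sigma^-_{\mathfrak X}$ is an intersection of clopen sets, hence closed and compact. For shift invariance, $V_{[\sigma^-\underline b]_k}\supseteq V_{[\underline b]_{k+1}}$, because $\sigma^-$ drops the coordinate $b_{-1}$ and the remaining composition sees a larger domain. Therefore $\sigma^-(\Sigma^-_{\mathfrak X})\subseteq\Sigma^-_{\mathfrak X}$.

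\emph{Step 3: H\"older continuity.} If $\underline b,\underline c$ agree on $N$ coordinates, both points lie in $V_{[\underline b]_N}$. Then $d(\pi\underline b,\pi\underline c)\le\lambda^N\diam X = \diam X\cdot d_-(\underline b,\underline c)^{-\log\lambda}$.

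\emph{Step 4: surjectivity onto $A_{\mathfrak X}$.} This is the main point. First, $\pi_{\mathfrak X}(\Sigma^-_{\mathfrak X})$ is compact and $F_{\mathfrak X}$-invariant. For $x=\pi(\underline b)\in X_j$, put $\tau_j(\underline b)=\underline b j$, the sequence $\underline b$ with $j$ appended at position $-1$. Then $f_j(x)\in V_{[\tau_j\underline b]_k}$ for all $k$, which proves item (2). Conversely, each point of the image is $f_{b_{-1}}$ of a point coded by $\sigma^-\underline b$. By maximality, $\pi(\Sigma^-_{\mathfrak X})\subseteq A_{\mathfrak X}$. For the reverse inclusion, invariance $A_{\mathfrak X}=F_{\mathfrak X}(A_{\mathfrak X})$ lets us choose, for each $x\in A_{\mathfrak X}$, a backward chain $x=f_{b_{-1}}(x_1)$, $x_1=f_{b_{-2}}(x_2)$, and so on, with each $x_i\in A_{\mathfrak X}\cap X_{b_{-i}}$. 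Then $x\in V_{[\underline b]_k}$ for every $k$, so $x=\pi(\underline b)$. The delicate part is confirming that the maximal invariant compact set exists and equals this image, which handles endpoints correctly.

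\emph{Step 5: the OSC case.} Set $B_j=\pi^{-1}(X_j)$, which is closed by continuity. Under the OSC, distinct last symbols give images in disjoint open sets. Induction on $k$ then shows that distinct codes have distinct points, so $\pi$ is injective. A continuous bijection between compact metric spaces is a homeomorphism, so $\pi$ is a conjugacy.
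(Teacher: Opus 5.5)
The paper gives no proof of this statement; it is quoted from \cite[Theorem~A]{OV25a}, so your sketch has to stand on its own. Steps 1--4 do. Nestedness, the bound $\diam V_{[\underline b]_k}\le\lambda^k\diam X$, the description of $\Sigma^-_{\mathfrak X}$ as an intersection of clopen sets, the H\"older estimate, and item (2) via $V_{[\tau_j\underline b]_{k+1}}=f_j\bigl(V_{[\underline b]_k}\cap X_j\bigr)$ are all correct.

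Two small points. In Step 2 the inclusion $V_{[\sigma^-\underline b]_k}\supseteq V_{[\underline b]_{k+1}}$ is false as a statement about sets. The right identity is $V_{[\underline b]_{k+1}}=f_{b_{-1}}\bigl(V_{[\sigma^-\underline b]_k}\cap X_{b_{-1}}\bigr)$, and it gives the implication $V_{[\underline b]_{k+1}}\neq\emptyset\Rightarrow V_{[\sigma^-\underline b]_k}\neq\emptyset$ that you actually need. In Step 4 the ``delicate part'' is already settled by your own backward-chain argument. That argument works for every compact $K$ with $K\subseteq F_{\mathfrak X}(K)$, so $\pi_{\mathfrak X}(\Sigma^-_{\mathfrak X})$ is a compact invariant set containing every other one. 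This proves that the maximal invariant set exists and equals the image, without presupposing $A_{\mathfrak X}$.

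The genuine gap is in Step 5. Suppose $\pi_{\mathfrak X}(\underline b)=\pi_{\mathfrak X}(\underline c)$. Separation of the first-level pieces gives $b_{-1}=c_{-1}$, and Step 4 gives $f_{b_{-1}}(\pi_{\mathfrak X}(\sigma^-\underline b))=f_{b_{-1}}(\pi_{\mathfrak X}(\sigma^-\underline c))$. To run the induction you must cancel $f_{b_{-1}}$. That requires each branch to be injective on $A_{\mathfrak X}\cap X_j$, and neither contraction nor separation provides this.

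For instance, take $X=[0,1]$, $X_1=X_2=X$, $f_1\equiv 0$ and $f_2(x)=(x+2)/3$. The images $\{0\}$ and $[2/3,1]$ are at positive distance, yet every code with $b_{-1}=1$ is sent to $0$.

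Moreover, ``images in disjoint open sets'' in the classical sense of the OSC does not separate points of the attractor. The first-level pieces of the Sierpi\'nski gasket meet at points that have two codes. What the induction really uses is $f_i(X_i\cap A_{\mathfrak X})\cap f_j(X_j\cap A_{\mathfrak X})=\emptyset$ for $i\neq j$, together with injective branches. The disjointness is the form in which this paper uses the OSC, cf.~\eqref{eq:opencond1}. You should state both conditions explicitly as the meaning of the OSC; with them, your induction plus compactness does give the conjugacy.
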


Given a contractive local IFS  $R_{\mathfrak X}=(X_j,f_j)_{1\le j \le n}$ consider the extended shift space
\begin{equation}
    \label{eq:def-enlarged-Omega}
    \Omega =\Big\{((x_i,a_i))_{i\ge 0} \in (X\times \{1, 2, \dots, n\})^{\mathbb N} \colon x_{i+1}=f_{\underline a_i}(x_i) \; \text{for every}\; i\ge 0 \Big\}
\end{equation}
associated to points with infinite orbits
(it is implicitly assumed that $x_i\in X_{a_i}$ for every $i\ge 0$). 
Given $k\ge 0$, denote by $\pi_{k,X}: \Omega \to X$ the projection given by $\pi_{k,X}(((x_i,a_i))_{i\ge 0})=x_k$. 
Consider also the shift map $\bar \sigma: \Omega \to \Omega$ given by
\begin{equation}
    \label{eq:extendedsigma}
\bar \sigma(((x_0,a_0),(x_1,a_1), (x_2,a_2), \dots))= ((x_1,a_1), (x_2,a_2), (x_3,a_3) \dots)
\end{equation}
and the maximal invariant subset $A^{\infty}_{\mathfrak X}$, defined by ~\eqref{eq:maxinvA}. 

\begin{theorem}
\label{thmC}\cite[Theorem~C]{OV25a} 
Let $R_{\mathfrak X}=(X_j,f_j)_{1\le j \le n}$ be a contractive local IFS so that $A_{\mathfrak X}\neq \emptyset$. Then $A^\infty_{\mathfrak X}\neq \emptyset$. 
Furthermore, there exists a non-empty set $\widehat \Sigma_{\mathfrak X}\subset \Sigma^-_{\mathfrak X} \times \Sigma^+$ so that
\begin{enumerate}
    \item $(\pi_{\mathfrak X}\circ \pi_{\Sigma^-})(\widehat \Sigma_{\mathfrak X})=A^\infty_{\mathfrak X}$;
    \item $\widehat \Sigma_{\mathfrak X}$ is invariant by the two-sided shift $\widehat \sigma$ defined by 
    $$
\widehat \sigma ( \dots , b_{-3}, b_{-2}, b_{-1}, \boxed{a_0}, a_1, a_2, a_3,\dots)
    = 
     ( \dots , b_{-2}, b_{-1}, a_0, \boxed{a_1}, a_2, a_3, a_4, \dots);
$$
    \item 
    for each $(\underline b,\underline a) \in \widehat\Sigma_{\mathfrak X}$ if holds that
$$
\widehat \sigma(\underline b, \underline a)=(\underline b \ast a_0, \sigma(\underline a)) = (\tau_{a_0}(\underline b), \sigma(\underline a)).
$$
\end{enumerate}
\end{theorem}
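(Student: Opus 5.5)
The statement is Theorem~\ref{thmC}, quoted from \cite[Theorem~C]{OV25a}. The plan is to build $\widehat\Sigma_{\mathfrak X}$ directly from the orbit space $\Omega$ in \eqref{eq:def-enlarged-Omega} together with the code map of Theorem~\ref{thm:classification}.

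\textbf{Step 1: $A^\infty_{\mathfrak X}\neq\emptyset$.} Let $x\in A_{\mathfrak X}$. By invariance $F_{\mathfrak X}(A_{\mathfrak X})=A_{\mathfrak X}$, every point of $A_{\mathfrak X}$ has a preimage inside $A_{\mathfrak X}$. Hence we can build arbitrarily long admissible backward chains $x=f_{b_{-1}}(y_1)$, $y_1=f_{b_{-2}}(y_2)$, and so on, with all $y_k\in A_{\mathfrak X}$. Reading such a chain forwards from $y_k$ gives a point with an admissible forward orbit of length at least $k$ that stays in $A_{\mathfrak X}$.

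Now let $E_k\subset A_{\mathfrak X}$ be the set of points admitting an admissible word of length $k$ whose orbit stays in $A_{\mathfrak X}$. Since the $X_j$ are closed and the $f_j$ continuous, $E_k$ is a finite union of compact sets. The sets $E_k$ are decreasing and, by the above, nonempty. Their intersection is nonempty by compactness.

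To get an infinite orbit, use compactness of $(X\times\{1,\dots,n\})^{\mathbb N}$. The sets $\Omega_k$ of sequences that are admissible for $k$ steps, with entries in $A_{\mathfrak X}$, are closed, decreasing and nonempty. Their intersection is a nonempty subset of $\Omega$ with first coordinates in $A_{\mathfrak X}$, so $A^\infty_{\mathfrak X}\neq\emptyset$.

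\textbf{Step 2: definition of $\widehat\Sigma_{\mathfrak X}$.} Let $\widehat\Sigma_{\mathfrak X}$ be the set of pairs $(\underline b,\underline a)\in\Sigma^-_{\mathfrak X}\times\Sigma^+$ such that some point $x\in\pi_{\mathfrak X}(\underline b)$ satisfies both conditions below:
\begin{enumerate}
\item $x$ has a forward admissible orbit along $\underline a$;
\item for every $k\ge 1$, the point $f^k_{\underline a}(x)$ lies in $\pi_{\mathfrak X}(\underline b\ast a_0\cdots a_{k-1})$.
\end{enumerate}
Condition (2) is automatic from the definition of $\pi_{\mathfrak X}$ via the cylinders $V_{[\cdot]_k}$, since $f_{a_0}(V_{[\underline b]_k})\subset V_{[\underline b a_0]_{k+1}}$.

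\textbf{Step 3: the three properties.} Item (1) needs two inclusions.
\begin{itemize}
\item The projection of $\widehat\Sigma_{\mathfrak X}$ lies in $A^\infty_{\mathfrak X}$ by construction.
\item Conversely, any $x\in A^\infty_{\mathfrak X}$ lies in $A_{\mathfrak X}=\pi_{\mathfrak X}(\Sigma^-_{\mathfrak X})$ by surjectivity in Theorem~\ref{thm:classification}. So $x$ has a code $\underline b$, and pairing $\underline b$ with the forward word $\underline a$ of its infinite orbit gives an element of $\widehat\Sigma_{\mathfrak X}$.
\end{itemize}
Concatenating $\underline b$ with $a_0$ gives a code in $\Sigma^-_{\mathfrak X}$, since $f_{a_0}(x)\in\pi_{\mathfrak X}(\underline b a_0)\neq\emptyset$. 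Hence $\widehat\sigma(\underline b,\underline a)=(\tau_{a_0}(\underline b),\sigma(\underline a))$ lies in $\widehat\Sigma_{\mathfrak X}$, witnessed by $f_{a_0}(x)$. This gives items (2) and (3), where $\tau_j$ is the concatenation map from Theorem~\ref{thm:classification}.

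\textbf{Expected obstacle.} The delicate point is Step~1: producing a genuinely infinite admissible orbit from finite ones inside $A_{\mathfrak X}$. Here endpoints can obstruct naive backward or forward extension, so the compactness (K\"onig-type) argument over $\Omega_k$ must be done carefully, using closedness of the domains $X_j$.
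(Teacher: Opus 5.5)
Your proposal is correct and follows the same route as the paper, whose remark after Theorem~\ref{thmC} constructs $\widehat\Sigma_{\mathfrak X}$ exactly as in \eqref{defOmegax}; your condition (2) is automatic, as you note, and your nested-compact-sets argument over the closed sets $\Omega_k$ correctly gives $A^\infty_{\mathfrak X}\neq\emptyset$ (the intermediate sets $E_k$ are not needed). If ``invariant'' is read as $\widehat\sigma(\widehat\Sigma_{\mathfrak X})=\widehat\Sigma_{\mathfrak X}$, the reverse inclusion is equally easy: contraction gives $\pi_{\mathfrak X}(\sigma^-(\underline b))\in X_{b_{-1}}$ and $f_{b_{-1}}(\pi_{\mathfrak X}(\sigma^-(\underline b)))=\pi_{\mathfrak X}(\underline b)$, so $(\sigma^-(\underline b), b_{-1}\underline a)\in\widehat\Sigma_{\mathfrak X}$ is a preimage of $(\underline b,\underline a)$.
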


\begin{remark}
Under the right encoding, the sequence $( \dots , b_{-3}, b_{-2}, b_{-1}, \boxed{a_0}, a_1, a_2, a_3,\dots)$ defines the dynamics of a local IFS in the following way: given $x_0=\pi_{\mathfrak X}(\dots , b_{-3}, b_{-2}, b_{-1}) \in A_{\mathfrak X}$ then one can obtain the next iterates as follows $x_1= f_{a_0}(x_0)$, $x_2= f_{a_1}(x_1)$, and so on. In this way, we form an orbit $((x_0,a_0), (x_1,a_1),\ldots)$ equivalent to $$( \dots , b_{-3}, b_{-2}, b_{-1}, \boxed{a_0}, a_1, a_2, a_3,\dots).$$ In the next, we will shift to this representation.    
\end{remark}

\begin{remark}
Given $x\in X$ consider the space 
$$
\Omega^x_\infty = \pi_{0,X}^{-1}(\{x\})= \Big\{ ((x_i,a_i))_{i\ge 0} \in \Omega_\infty \colon \pi_{0,X}(((x_i,a_i))_{i\ge 0})=x \Big\}
$$
which represents all possible orbits $(f_{a_{k}}\circ \ldots f_{a_{1}} \circ f_{a_{0}}(x))_{k\ge 0}$ of the point $x$, for all possible choices of paths $\underline a \in \Sigma^+$.
The space $\widehat \Sigma_{\mathfrak X}$ in Theorem~\ref{thmC} is obtained as 
\begin{equation}
    \label{defOmegax}
\widehat \Sigma_{\mathfrak X}=\Big\{(\underline b, \underline a) \in \Sigma^-_{\mathfrak X} \times \Sigma^+ \colon \underline a \in \pi_{\Sigma^+}(\Omega^x_\infty) \; \text{where} \; x=\pi_{\mathfrak X}(\underline b) \Big\}
\end{equation}
which, by construction, satisfies $(\pi_{\mathfrak X}\circ \pi_{\Sigma^-}) (\widehat \Sigma_{\mathfrak X})=A^\infty_{\mathfrak X}$.  \end{remark}

%%%%%%%%%%%%%%%%%%%%%%%%%%%%%%
\subsection{Combinatorial stability}

In this subsection, we initiate the study of the notion of combinatorial stability and its relation to the set of points with infinite orbits in the local attractor.
Recall the spaces of local IFSs and the $D$-distance introduced in~\eqref{def:distanceS}.

\begin{lemma}\label{le:past-2-future}
    Assume that $R_{\mathfrak X}$ is combinatorially stable. There exists an open neighborhood $\mathcal V$ of $R_{\mathfrak X}$ so that 
     $\widehat \Sigma_{\mathfrak Y}= \widehat \Sigma_{\mathfrak X}$ for every $R_{\mathfrak Y} \in \mathcal V$.
\end{lemma}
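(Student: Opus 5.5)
Take $\mathcal V$ to be the neighborhood given by Definition~\ref{def:combinatorially stable}, so that $\Sigma^-_{\mathfrak Y}=\Sigma^-_{\mathfrak X}$ for every $R_{\mathfrak Y}\in\mathcal V$. The plan is to show that $\widehat\Sigma_{\mathfrak X}$ is determined by $\Sigma^-_{\mathfrak X}$ alone, through a formula that does not involve the maps or the domains. Once this is established, equality of the past code spaces immediately gives equality of the two-sided spaces.

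The key claim is the identity
\[
\widehat\Sigma_{\mathfrak X}=\Big\{(\underline b,\underline a)\in \Sigma^-_{\mathfrak X}\times\Sigma^+ \colon \underline b\ast a_0\ast a_1\ast\cdots\ast a_{k}\in \Sigma^-_{\mathfrak X}\ \text{for every } k\ge 0\Big\},
\]
where $\underline b\ast a_0\ast\cdots\ast a_k=\tau_{a_k}\circ\cdots\circ\tau_{a_0}(\underline b)$ is the concatenation appearing in Theorem~\ref{thmC}(3).

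For the inclusion ``$\subseteq$'', I use the characterization \eqref{defOmegax}. If $(\underline b,\underline a)\in\widehat\Sigma_{\mathfrak X}$, put $x=\pi_{\mathfrak X}(\underline b)$. Then $x_{k+1}=f_{a_k}\circ\cdots\circ f_{a_0}(x)$ is defined for every $k$. By the semiconjugacy in Theorem~\ref{thm:classification}(2), together with the nested-set definition of the code map, the point $x_{k+1}$ lies in $V_{[\underline b\ast a_0\cdots a_k]_m}$ for every $m$. Hence $\pi_{\mathfrak X}(\underline b\ast a_0\cdots a_k)\neq\emptyset$, which is the required membership.

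For the inclusion ``$\supseteq$'', suppose all the concatenations lie in $\Sigma^-_{\mathfrak X}$. I want to show that $x=\pi_{\mathfrak X}(\underline b)$ lies in $X_{a_0}$ and that $f_{a_0}(x)=\pi_{\mathfrak X}(\underline b\ast a_0)$, and then iterate. The argument goes through the nested sets: $V_{[\underline b\ast a_0]_{m+1}}=f_{a_0}\big(V_{[\underline b]_m}\cap X_{a_0}\big)$ is nonempty for every $m$. So, using compactness and contraction, there are points of $V_{[\underline b]_m}\cap X_{a_0}$ for all $m$. Since $X_{a_0}$ is closed and $\diam V_{[\underline b]_m}\to0$, their limit $x$ lies in $X_{a_0}$, and continuity of $f_{a_0}$ gives the identity. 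Induction then produces the full orbit, so $\underline a\in\pi_{\Sigma^+}(\Omega^x_\infty)$.

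The main obstacle is that without the open set condition $\pi_{\mathfrak X}$ may not be injective, so $x$ depends on the chosen $\underline b$. I will avoid this issue by always working with the particular code $\underline b$, as \eqref{defOmegax} does, and by noting that $\pi_{\mathfrak X}$ is single-valued on $\Sigma^-_{\mathfrak X}$ by contraction. Finally, I apply the same identity to every $R_{\mathfrak Y}\in\mathcal V$; this is legitimate because these systems are also contractive, as they lie in the relevant class of perturbations. Combining it with $\Sigma^-_{\mathfrak Y}=\Sigma^-_{\mathfrak X}$ gives $\widehat\Sigma_{\mathfrak Y}=\widehat\Sigma_{\mathfrak X}$.
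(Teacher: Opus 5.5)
Your proposal is correct and follows essentially the paper's route: both reduce membership in $\widehat\Sigma$ to the condition that every concatenation $\underline b\ast a_0\cdots a_k$ lies in the past code space, and then transfer this through $\Sigma^-_{\mathfrak Y}=\Sigma^-_{\mathfrak X}$. Your nested-set argument for the ``$\supseteq$'' inclusion also supplies a justification for the identity $f_{a_N}\circ\cdots\circ f_{a_0}(\pi_{\mathfrak X}(\underline b))=\pi_{\mathfrak X}(\underline b\ast(a_0,\dots,a_N))$, which the paper's proof states without argument.
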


\begin{proof}
Consider an open neighborhood $\mathcal V$ of $R_{\mathfrak X}$ so that 
$\Sigma^-_{\mathfrak Y}=\Sigma^-_{\mathfrak X}$ for every $R_{\mathfrak Y} \in \mathcal V$. 
We claim that $\hat \Sigma_{\mathfrak Y}\subseteq \hat \Sigma_{\mathfrak X}$.
By equation~\ref{defOmegax},
$$
\widehat \Sigma_{\mathfrak Y}=\Big\{(\underline b, \underline a) \in \Sigma^-_{\mathfrak Y} \times \Sigma^+ \colon \underline a \in \pi_{\Sigma^+}(\Omega^y_\infty) \; \text{where} \; y=\pi_{\mathfrak Y}(\underline b), \; \underline b \in \Sigma^-_{\mathfrak Y} 
\Big\}.
$$ 
Hence, given $(\underline b,\underline a) \in \widehat \Sigma_{\mathfrak Y}$, the point $y=\pi_{\mathfrak Y}(\underline b)$ belongs to the attractor $A_{\mathfrak Y}$. As $\underline a \in \pi_{\Sigma^+}(\Omega^y_\infty)$ the orbit $\{g_{\underline a}^k(y)\colon k\ge 0\}$ of $y$ is well defined and contained in $A_{\mathfrak Y}$. In particular,  
$\underline b\ast (a_0,a_1, \dots, a_N) \in \Sigma^-_{\mathfrak Y}=\Sigma^-_{\mathfrak X}$ for every $N\ge 1$.
Therefore, 
$$
f_{a_N}\circ \dots \circ f_{a_1}\circ f_{a_0}(\pi_{\mathfrak X}(\underline b)) = \pi_{\mathfrak X}(\underline b\ast (a_0,a_1, \dots, a_N)) \in A_{\mathfrak X}
$$
for every $N\ge 1$.
Equivalently, $\underline a\in \pi_{\Sigma^+}(\Omega_\infty^x)$ where $x=\pi_{\mathfrak X}(\underline b)$, which proves that $(\underline b, \underline a)\in \widehat \Sigma_{\mathfrak X}$.
Since the reverse inclusion is identical, this finishes the proof of the lemma.
\end{proof}
\color{black}

There are examples of contractive local IFSs which satisfy the OSC condition but fail to be combinatorially stable (cf. proof of Theorem~\ref{thm:beta}). The next proposition provides a criterion for combinatorial stability of local IFSs.

\begin{proposition}
\label{thm:criterion-combinatorial-stability}
Let $(X,d)$ be a compact metric space and let
$R_{\mathfrak X}=(X_j,f_j)_{1\le j\le n}$ be a contractive local IFS. Assume that:
\begin{itemize}
    \item[(H1)] $R_{\mathfrak X}$ satisfies the open set condition;
    \item[(H2)] 
    either $
    f_j(X_j)\subset \interior(X_i)
    \;\text{or}\;
    f_j(X_j)\cap X_i=\emptyset
    $ 
    for every distinct $1\le i,j\le n$.
\end{itemize}
Then $\Sigma^- _{\mathfrak X}$ is a subshift of finite type and there exists an open neighborhood $\mathcal V$ of
$R_{\mathfrak X}$ in $\mathcal F_C$ such that
$\Sigma^-_{\mathfrak Z}=\Sigma^-_{\mathfrak X}$ 
for every $R_{\mathfrak Z}\in\mathcal V$.
\end{proposition}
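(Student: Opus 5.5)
Set $M_{ij}=1$ if $f_j(X_j)\subset\interior(X_i)$ and $M_{ij}=0$ if $f_j(X_j)\cap X_i=\emptyset$ (for $i\neq j$). For $i=j$ the hypothesis says nothing, so I would define $M_{jj}$ by the same dichotomy when it holds and note that otherwise the argument must rely on (H1). The plan has three steps: describe $\Sigma^-_{\mathfrak X}$ through $M$; show that $M$ does not change under small perturbations; and conclude that the code space does not change.

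\emph{Step 1 (finite type).} Recall that $\underline b\in\Sigma^-_{\mathfrak X}$ iff $V_{[\underline b]_k}\neq\emptyset$ for every $k$. This holds iff every finite composition $f_{b_{-1}}\circ\cdots\circ f_{b_{-k}}$ is defined on a nonempty set.

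If some consecutive pair has $M_{b_{-m+1}b_{-m}}=0$, then $f_{b_{-m}}(X_{b_{-m}})\cap X_{b_{-m+1}}=\emptyset$, so the composition is empty. Conversely, suppose $M_{b_{-m+1}b_{-m}}=1$ for all $m$. Then inductively $V_{b_{-m}\dots}\subset f_{b_{-m}}(X_{b_{-m}})\subset X_{b_{-m+1}}$, so every finite composition is defined on the whole of $X_{b_{-k}}$, which is nonempty. Hence $\Sigma^-_{\mathfrak X}$ is the topological Markov chain determined by the transitions with $M=1$, a subshift of finite type.

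For the diagonal case I would argue with the OSC. Then $\Sigma^-_{\mathfrak X}$ is still determined by length-2 words, provided the relation $f_j(X_j)\cap X_j\ne\emptyset$ behaves robustly. This is the point I expect to need care. A possible route is to treat self-transitions via the fixed point of $f_j$: it lies in $X_j$ or not, and both alternatives are open conditions unless the fixed point lies on $\partial X_j$.

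\emph{Step 2 (robustness of $M$).} Both alternatives in (H2) are open conditions in the metric $D$. Compactness gives $\eta>0$ with $\dist(f_j(X_j),X\setminus \interior X_i)>\eta$ in the first case and $\dist(f_j(X_j),X_i)>\eta$ in the second.

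Now let $D(R_{\mathfrak X},R_{\mathfrak Z})<\eta/3$ with $R_{\mathfrak Z}=(Z_j,g_j)$. Then there is a homeomorphism $\zeta\colon X_j\to Z_j$ with $d_{C^0}(f_j,g_j\circ\zeta)<\eta/3$, and this forces $g_j(Z_j)=g_j(\zeta(X_j))$ to lie in the $\eta/3$-neighbourhood of $f_j(X_j)$. Since also $\mathrm{dist}_H(X_i,Z_i)<\eta/3$, disjointness persists: $g_j(Z_j)\cap Z_i=\emptyset$.

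Inclusion is the delicate part, because Hausdorff closeness of $Z_i$ to $X_i$ does not give $Z_i\supset$ (neighbourhood of $f_j(X_j)$). This is the main obstacle. I would try to use the homeomorphism $\zeta_i\colon X_i\to Z_i$ near the identity. Since $f_j(X_j)\subset\interior X_i$ and $\zeta_i$ moves points by less than $\eta/3$, invariance-of-domain-type reasoning should give $\zeta_i(X_i)\supset$ a neighbourhood of the compact set $f_j(X_j)$ after shrinking $\eta$. If that fails in general metric spaces, I would fall back on the weaker statement that suffices for the code space: every point of $g_j(Z_j)$ can be continued by $g_i$, obtained by composing with $\zeta_i^{-1}$. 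In other words, nonemptiness of the compositions is what matters, not inclusion itself.

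\emph{Step 3.} With the same transition matrix for $R_{\mathfrak Z}$, Step 1 applied to $R_{\mathfrak Z}$ gives $\Sigma^-_{\mathfrak Z}=\Sigma^-_{\mathfrak X}$. Step 1 needs only the dichotomy, contraction and nonemptiness for $R_{\mathfrak Z}$, and the OSC is used only to settle the diagonal transitions. Then $\mathcal V=\{D<\eta/3\}\cap\mathcal F_C$ is the required neighbourhood.
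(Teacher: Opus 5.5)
Your Steps 1 and 3, and the disjointness half of Step 2, are exactly the paper's argument. The genuine gap is the one you flagged yourself: persistence of $f_j(X_j)\subset\interior(X_i)$ under $D$-perturbations. Neither of your routes closes it.

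\emph{Inclusion.} The degree/invariance-of-domain idea does work in $\mathbb R^d$, or in charts of a manifold. If $\zeta_i\colon X_i\to Z_i$ moves points by less than $r'$ and $\bar B(p,r')\subset X_i$, then the straight-line homotopy to the identity avoids $p$ on $\partial B(p,r')$. Hence $\deg(\zeta_i,B(p,r'),p)=1$ and $p\in Z_i$. Nothing of this kind is available in a general compact metric space. Your fallback is not an argument either: continuing an $R_{\mathfrak Z}$-orbit by $g_i$ requires $g_j(z)\in Z_i$ literally, and inserting $\zeta_i^{-1}$ only produces points of $X_i$.

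The paper does not supply the missing idea. Its estimate subtracts $\dist_H(X\setminus X_j,X\setminus Y_j)$, which $D$ does not control. In fact the step, and the statement, fail at this generality:
\begin{itemize}
\item[(a)] Take $X=\{0,1\}^{\mathbb N}$ with $d(x,y)=2^{-\min\{k\colon x_k\neq y_k\}}$, $X_1=X_2=X$, $f_1(x)=0x$, $f_2(x)=1x$ (prepend a symbol). This system is contractive with disjoint images, the dichotomy in (H2) holds for every pair (diagonal included), and $\Sigma^-_{\mathfrak X}$ is the full shift.
\item[(b)] For large $N$ let $Z_1=X$, $g_1=f_1$, $Z_2=\{x\colon (x_N,x_{N+1})\neq(1,1)\}$ and $g_2=f_2|_{Z_2}$.
\item[(c)] Fix the points with $x_N=0$, and insert a $0$ after position $N$ when $x_N=1$. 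This defines a homeomorphism $\zeta\colon X\to Z_2$ moving points by at most $2^{-N-1}$. Using $\zeta$ and $\zeta^{-1}$ in $d_{\mathcal S}$ gives $D(R_{\mathfrak X},R_{\mathfrak Z})\le 3\cdot 2^{-N}$.
\item[(d)] Yet $1^r x\notin Z_2$ for $r\ge N+1$. So $g_2^{N+2}$ is nowhere defined and $(\dots,2,2,2)\notin\Sigma^-_{\mathfrak Z}$.
\end{itemize}
So extra structure is indispensable. One option is a manifold setting where your degree argument applies; another is a distance that also controls the complements of the domains.

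\emph{Diagonal.} This is a second, independent hole. The fixed point of $f_j$ need not lie in $X_j$, since $f_j$ need not map $X_j$ into itself. In any case it only decides whether $j^\infty$ is admissible. When $f_j(X_j)$ meets $X_j$ without lying in its interior, admissibility of $w\,j^k$ can depend on where $f_w$ places the domain, hence on arbitrarily long pasts $w$. Then neither the finite-type conclusion of Step 1 nor its persistence follows, and the OSC says nothing about $f_j(X_j)\cap X_j$. The paper's proof defines $m_{ij}$ for all pairs and applies the dichotomy also when $i=j$. You should likewise impose (H2) for all $i,j$ explicitly rather than try to recover the diagonal case from the OSC.
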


\begin{proof}
Both assumptions (H1) and (H2) define open conditions in $\mathcal F_0$. In fact, by compactness of $X_i$ and $f_i(X_i)$ for every $1\le i \le n$, there exists $\zeta>0$ such that 
\begin{equation}
\label{eq:opencond1}
\min_{1\le i,j\le n} \dist_H\bigl(f_i(X_i),f_j(X_j)\bigr)\ge \zeta
\end{equation}
\begin{equation}
\label{eq:opencond2}
\min_{\{\substack{i,j} \colon f_i(X_i) \cap X_j=\emptyset\}} \Big\{
\mathfrak\dist_H\bigl(f_i(X_i),X_j\bigr)
\Big\}
\ge \zeta
\end{equation}
and 
\begin{equation}
\label{eq:opencond3}
\min_{\{\substack{i,j} \colon f_i(X_i) \subset int(X_j)\}} \Big\{
\mathfrak\dist_H\bigl(f_i(X_i),
X\setminus \mathrm{int}(X_j)\bigr)
\Big\}
\ge \zeta.
\end{equation}

Consider the transition matrix $M=(m_{ij})\in\mathcal M_{n\times n}(\{0,1\})$ defined by $m_{ij}=1$ if and only if $f_j(X_j)\cap X_i\neq\emptyset$.
By assumption (H2) it follows that 
$f_j(X_j)\subset  X_i$ whenever $m_{ij}=1$.

We claim that $\Sigma^-_{\mathfrak X}$ is a subshift of finite type. 
In fact $\underline b\in\Sigma^-_{\mathfrak X}$ if and only if
$$
V_{[\underline b]_k}
=f_{b_{-1}}\circ\cdots\circ f_{b_{-k}}(X_{b_{-k}}) 
$$
is non-empty
for every $k\ge1$. The latter is only possible when $f_{b_{-k}}(X_{b_{-k}})\cap X_{b_{-(k-1)}}\neq\emptyset$ for every $k\ge 2$ or, equivalently,
$m_{b_{-(k-1)},\,b_{-k}}=1$
for all  $k\ge2$.
Conversely, given $k\ge 2$ arbitrary, if $m_{b_{-(k-1)},\,b_{-k}}=1$
then
$f_{b_{-k}}(X_{b_{-k}})
\subset \interior(X_{b_{-(k-1)}})$ and, in consequence, 
the compact set $V_{[\underline b]_k}$ is non-empty for every $k\ge 1$ and 
$\pi_{\mathfrak X}(\underline b)
=
\bigcap_{k\ge1}V_{(b_{-k},\dots,b_{-1})}
\neq\emptyset$
(hence  $\underline b\in\Sigma^-_{\mathfrak X}$). Altogether we conclude that
\[
\Sigma^-_{\mathfrak X}
=
\Bigl\{
\underline b\in\Sigma^-:
m_{b_{-(k-1)},\,b_{-k}}=1
\text{ for every } k\ge2
\Bigr\},
\]
which proves that $\Sigma^-_{\mathfrak X}$ is a subshift of finite type.

\medskip
We now show that the local IFS $R_{\mathfrak X}$ is combinatorially stable. In fact, let $\zeta>0$ satisfy properties ~\eqref{eq:opencond1}-\eqref{eq:opencond3} and let 
$R_{\mathfrak Y}$ be a local IFS such that  $D(R_{\mathfrak X},R_{\mathfrak Y})<\zeta/3$.
This implies that 
$$
\dist_H(X_i,Y_i)<\zeta/3 
\quad\text{and}
\quad
\dist_H(f_i(X_i),g_i(Y_i))<\zeta/3 
$$
for every $1\le i \le n$. In particular, if $m_{ij}=1$  (equivalently $f(X_i)\subset \text{int}(X_j)$), one concludes that  
$$
\dist_H(g(Y_i),X\setminus Y_j)
\ge \dist_H(f(X_i),X\setminus X_j) 
-
\dist_H(g(Y_i), f(X_i))
-\dist_H(X\setminus X_j, X\setminus Y_j)
> \frac\zeta3.
$$
Similarly if $m_{ij}=0$ (or $f(X_i)\cap X_j=\emptyset$) then 
$$
\dist_H(g(Y_i),Y_j)
\ge \dist_H(f(X_i), X_j) 
-
\dist_H(g(Y_i), f(X_i))
-\dist_H(X_j, Y_j)
> \frac\zeta3.
$$
Therefore, the transition matrix $M$ also describes the admissible
transitions for $R_{\mathfrak Y}$, hence 
$ 
\Sigma^-_{\mathfrak Y}=\Sigma^-_{\mathfrak X}
$ 
is a subshift of finite type. We conclude that $R_{\mathfrak X}$ is combinatorially stable, as desired. 
\end{proof}

\begin{remark}
Assumption (H1) is necessary for the shadowing property  (cf. Example~\ref{ex:shift2}). \color{black}
Moreover, the fact that the code space above is a subshift of finite type arises naturally in the context of a Markov-type condition, such as assumption (H2).
\end{remark}

\subsection{Concordant shadowing}

In this subsection we will establish a quite useful relation on the concordant shadowing with respect to finite and infinite pseudo-orbits (cf. Lemma~\ref{le:finiteshadowequiv} below).
Given 
$\underline a\in\Sigma^+$, $\delta>0$ and a $(\underline a, \delta)$-pseudo-orbit, we say that the sequence $(x_k)_{k=0}^N$ 
is a \emph{$(\underline a,N,\delta)$-pseudo-orbit} if $d\bigl(f_{a_k}(x_k),x_{k+1}\bigr)<\delta$ for every $0\le k \le N-1$.

\begin{definition}\label{def:finite-shadowing}
Given a local IFS $R_{\mathfrak X}=(X_j,f_j)_{1\le j\le n}$,  we say that $R_{\mathfrak X}$ satisfies the \emph{finite concordant shadowing
property} if for every
$\vep>0$ there exists $\delta>0$ such that for every 
$\underline a\in\Sigma^+$
and $N\ge 1$ the following holds: for every $(\underline a,\delta)$-pseudo-orbit $(x_k)_{k\ge 0}$
there exists
$x\in X_{a_0}$ such that
\[
d\bigl(f_{\underline a}^k(x),x_k\bigr)<\vep
\quad\text{for all } 0\le k\le N.
\]
We say that the
$(\underline a,N,\delta)$-pseudo-orbit $(x_k)_{k=0}^N$ is \emph{$(\underline a,\vep)$-shadowed}
by $x$.
\end{definition}

\begin{remark}
    There is a contrast between the concept in Definition~\ref{def:finite-shadowing} and the usual notion of finite shadowing. In fact, while for a single map $f$ every finite pseudo-orbit $(x_k)_{k=0}^N$ can give rise to an infinite pseudo-orbit simply by taking $x_k=f^{k-N}(x_N)$ for all $k\ge N+1$, in the context of local IFSs there may exist finite pseudo-orbits (even finite orbits) to which one cannot a sequence in the space $\underline a\in \Sigma^+$. 
\end{remark}

\begin{lemma}
\label{le:finiteshadowequiv}
    Let $R_{\mathfrak X}=(X_j,f_j)_{1\le j \le n}$ be a local IFS.
Then $R_{\mathfrak X}$ has the concordant shadowing property 
if and only if it satisfies the finite concordant shadowing property.
\end{lemma}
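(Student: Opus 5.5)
Both implications follow from the definitions, with a compactness argument for the nontrivial direction. Note that the finite concordant shadowing property, as stated in Definition~\ref{def:finite-shadowing}, is formulated for infinite $(\underline a,\delta)$-pseudo-orbits, but only requires shadowing along the first $N+1$ terms, with $N$ arbitrary.

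\emph{Concordant shadowing implies finite concordant shadowing.} Given $\vep>0$, take the $\delta>0$ provided by Definition~\ref{def:concordant-shadowing}. Any $(\underline a,\delta)$-pseudo-orbit $(x_k)_{k\ge0}$ is $(\underline a,\vep)$-shadowed by some $x$. Then $x\in X_{a_0}$, since the orbit $f^k_{\underline a}(x)$ must be defined for all $k$. In particular $d(f^k_{\underline a}(x),x_k)<\vep$ for $0\le k\le N$, for every $N$. The index $k=0$ needs a small remark, because Definition~\ref{def:shadowing} is written for $k\ge1$. I would either interpret it with $k\ge0$, or note that when $k=0$ is omitted one can still handle it. I would flag this convention and take the indexing to include $k=0$.

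\emph{Finite concordant shadowing implies concordant shadowing.} Fix $\vep>0$ and take the $\delta$ associated to $\vep/2$ by finite concordant shadowing. Let $(x_k)$ be an $(\underline a,\delta)$-pseudo-orbit. For each $N$, choose $y_N\in X_{a_0}$ with $f^k_{\underline a}(y_N)$ defined and $d(f^k_{\underline a}(y_N),x_k)<\vep/2$ for $0\le k\le N$. By compactness of $X_{a_0}$, pass to a subsequence with $y_{N_j}\to y\in X_{a_0}$. The main step is to show that, for each fixed $k$, the iterate $f^k_{\underline a}(y)$ is defined and satisfies $d(f^k_{\underline a}(y),x_k)\le\vep/2<\vep$. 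I would argue by induction on $k$. Suppose $f^k_{\underline a}(y_{N_j})\to f^k_{\underline a}(y)$, with all of these points in $X_{a_k}$ (for $N_j>k$ this holds by definition of the iterates). Since $X_{a_k}$ is closed, the limit lies in $X_{a_k}$. Continuity of $f_{a_k}$ on $X_{a_k}$ then gives convergence of the next iterate. Passing to the limit in the strict inequality yields the non-strict bound $\le\vep/2$, which is why I apply the hypothesis with $\vep/2$. Consequently $y$ has an infinite $\underline a$-orbit and $(\underline a,\vep)$-shadows $(x_k)$.

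\emph{Main obstacle: the shadowing point must lie in $A_{\mathfrak X}$.} Definition~\ref{def:concordant-shadowing} requires $y\in A_{\mathfrak X}$, while the finite version only gives $y\in X_{a_0}$. The natural fix is to show that the set $A^\infty_{\mathfrak X}$ of points with infinite orbits lies in the attractor. I would first try the following: the set $\{z: z$ admits an infinite admissible orbit$\}$, together with its forward iterates, is a compact set $B$ with $B\subseteq F_{\mathfrak X}(B)$ up to closure. By maximality of $A_{\mathfrak X}$ (cf. Theorem~\ref{thm:classification}), the union $\bigcup_n F^n_{\mathfrak X}(\overline B)$ is then contained in an invariant set, and hence in $A_{\mathfrak X}$. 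If this inclusion proves delicate, the fallback is to note that the pseudo-orbits under consideration lie in $A_{\mathfrak X}$, and to restrict the finite shadowing points to $A_{\mathfrak X}$ as well. Since $A_{\mathfrak X}$ is compact, the limit $y$ then stays in $A_{\mathfrak X}$. I would adopt this reading of Definition~\ref{def:finite-shadowing} if needed.
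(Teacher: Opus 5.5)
Your argument is correct and is essentially the paper's own proof. The paper takes the nested sets
\[
E_N=\Bigl\{x\in X_{a_0}:\ d\bigl(f^k_{\underline a}(x),x_k\bigr)\le\vep \text{ for } 0\le k\le N\Bigr\},
\]
observes that they are nonempty and compact, and picks a point of $\bigcap_N E_N$. That is the same compactness step as your convergent subsequence of finite shadows. Your use of $\vep/2$ and your induction over the closed admissible domains make explicit what the paper leaves implicit: the strict inequality, and the closedness of $E_N$.

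Your side remark on $A_{\mathfrak X}$ needs one correction. The ``natural fix'' you try first would fail, because a point with an infinite admissible forward orbit need not lie in $A_{\mathfrak X}$. For example, take $f_1(x)=x/3$ and $f_2(x)=x/3+2/3$ on $X_1=X_2=[0,1]$. Every point then has infinite orbits, but the attractor is the Cantor set. The set $B$ of such points is only backward invariant, $B=\bigcup_j f_j^{-1}(B)$ with preimages taken in $X_j$; it does not satisfy $B\subseteq F_{\mathfrak X}(B)$.

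Your fallback is the right reading. Take both the pseudo-orbits and the finite shadowing points in $A_{\mathfrak X}$, so that the limit stays in the compact set $A_{\mathfrak X}\cap X_{a_0}$. The paper's proof does not address this point at all, since its sets $E_N$ lie only in $X_{a_0}$.
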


\begin{proof}
By definition it is clear that the concordant shadowing property implies the finite concordant shadowing property.
It remains to prove the converse. 

Fix $\vep>0$ and let $\delta>0$ be given by the finite concordant shadowing
property.
Let $\underline a\in\Sigma^+$ and let $(x_k)_{k\ge 0}$ be a
$(\underline a,\delta)$-pseudo-orbit. Recall that $x_k\in X_{a_k}$ for all $k\ge 0$.
For each $N\ge 0$ consider the set
\[
E_N =
\Bigl\{
x\in X_{a_0}:\ d\bigl(f_{\underline a}^k(x),x_k\bigr)\le \vep
\ \text{for all } 0\le k\le N
\Bigr\}.
\]
By the finite concordant shadowing property applied to the finite pseudo-orbit
$(x_k)_{k=0}^N$
it follows that 
$E_N\neq\emptyset$ for every $N\ge 1$.
Since the family $(E_N)_{N\ge 1}$ is nested (by definition, $E_{N+1}\subset E_N$)
and each set $E_N\subset X_{a_0}$ is closed in $X_{a_0}$, hence compact, there exists a point
$ 
x \in \bigcap_{N\ge 0} E_N \neq \emptyset.
$ 
The pseudo-orbit $(x_k)_{k\ge 0}$ is $(\underline a,\vep)$-shadowed by $x$.
This proves the concordant shadowing property, as desired.   
\end{proof}

The previous lemma will be instrumental in the characterization of topological stability for local IFSs on compact manifolds.

\subsection{Shadowing property for skew-products}

In this subsection we offer a new insight to the concordant shadowing property, based on skew-product dynamics. It is well known that factors of maps with the shadowing property need not satisfy the shadowing property. The following simple lemma shows this is the case for the natural projection of a skew-product.

\begin{lemma}\label{projshdow}
Let $(Y,d_Y)$ and $(Z,d_Z)$ be compact metric spaces and endow the space $Y\times Z$ with the  metric 
 $d((y_1,z_1),(y_2,z_2)=\max\{d_Y(y_1,y_2), d_Z(z_1,z_2)\}.
$  Let  $S: Y \times Z \to Y \times Z $ be a continuous skew-product defined by
$
S(y,z) = (f(y), g(y,z)).
$ 
If $S$ satisfies the shadowing property then $f$ satisfies the shadowing property.
\end{lemma}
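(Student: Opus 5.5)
The plan is to lift pseudo-orbits of $f$ to pseudo-orbits of $S$ by choosing the $Z$-coordinate along genuine fibre dynamics, and then project the shadowing orbit. Here shadowing for a single map means: for every $\vep>0$ there is $\delta>0$ such that every $\delta$-pseudo-orbit is $\vep$-shadowed by a genuine orbit.

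Fix $\vep>0$ and let $\delta>0$ be the constant given by the shadowing property of $S$ for this $\vep$. Let $(y_k)_{k\ge0}$ be a $\delta$-pseudo-orbit of $f$, so $d_Y(f(y_k),y_{k+1})<\delta$ for every $k\ge 0$. Pick any $z_0\in Z$ and define inductively
$$
z_{k+1}=g(y_k,z_k), \qquad k\ge 0.
$$
Then
$$
S(y_k,z_k)=(f(y_k),g(y_k,z_k))=(f(y_k),z_{k+1}),
$$
so in the max metric
$$
d\bigl(S(y_k,z_k),(y_{k+1},z_{k+1})\bigr)=\max\{d_Y(f(y_k),y_{k+1}),\,0\}<\delta.
$$
Hence $((y_k,z_k))_{k\ge0}$ is a $\delta$-pseudo-orbit of $S$.

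By the shadowing property of $S$ there is $(y,z)\in Y\times Z$ with $d(S^k(y,z),(y_k,z_k))<\vep$ for all $k\ge 0$. Because $S$ is a skew-product, an induction gives $\pi_Y\circ S^k=f^k\circ\pi_Y$, where $\pi_Y$ is the projection onto $Y$. So the first coordinate of $S^k(y,z)$ is $f^k(y)$. Since the max metric dominates each coordinate distance,
$$
d_Y(f^k(y),y_k)\le d\bigl(S^k(y,z),(y_k,z_k)\bigr)<\vep \qquad\text{for all } k\ge 0,
$$
so $y$ $\vep$-shadows $(y_k)$. As $\vep$ was arbitrary, $f$ has the shadowing property.

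No step here is a genuine obstacle. The only point needing care is constructing the lift with zero fibre error, which is why the $z_k$ are defined through the fibre maps $g(y_k,\cdot)$ rather than chosen arbitrarily. If the intended notion is two-sided shadowing for homeomorphisms, the same argument runs for indices $k\ge0$, but backward lifting would require invertibility of $g(y,\cdot)$; I would state and prove the one-sided version.
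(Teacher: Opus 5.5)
Your proof is correct and follows the paper's argument: lift the pseudo-orbit by setting $z_{k+1}=g(y_k,z_k)$ so that the fibre error vanishes, apply the shadowing property of $S$, and project to the base using $\pi_Y\circ S^k=f^k\circ\pi_Y$ and the fact that the max metric dominates $d_Y$.
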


\begin{proof} Fix $\vep>0$ arbitrary. Let $\delta>0$ be given by the shadowing property for $S$.
We claim that every $\delta$-pseudo orbit for $f$ is $\vep$-shadowed by a true orbit of $f$. Indeed, let $ \{y_n\}_{n  \ge 0} \subset X $ be a $ \delta $-pseudo-orbit of $ f $, i.e.,
$d_Y(f(y_n), y_{n+1}) < \delta \quad \text{for all } n \ge 0.$
Fix an arbitrary point $ z_0 \in Z $. 
Consider the point $p_0 = (y_0, z_0)$.  We proceed recursively and define the sequence of point 
$ 
p_{n+1} = (y_{n+1}, g(p_n)), \quad \text{for each $n\ge 0$}
$ 
in $Y\times Z$. By construction 
\begin{align*}
d(S(p_n),p_{n+1}) 
    & = d((f(y_n), g(p_n)),(y_{n+1}, g(p_n)))
    =d(f(y_n),y_{n+1}) \\
& = {\max(d_Y(f(y_n),y_{n+1}), 0)} <\delta    
\end{align*}
for each $n\ge 1$, which guarantees that $(p_n)_{n\ge 0}$ is a $\delta$-pseudo orbit with respect to $S$.
By the shadowing property of $ S $, there exists $ (y, z) \in Y \times Z $ such that
$d(S^n(y, z), p_n) < \vep$ for all $n \ge 0.$
In particular, 
$ 
d_Y(f^n(x^*), x_n)  < \vep$ for all  $n \in \mathbb{N}$. This proves that $f$ has the shadowing property.
\end{proof}

The previous lemma allows one to relate concordant shadowing for local IFSs with the usual shadowing property for skew-products.
Let $\widehat \Sigma^+_{\mathfrak X}=\pi_{\Sigma^+}(\widehat \Sigma_{\mathfrak X})$, where 
$\widehat \Sigma_{\mathfrak X}$ is given by Theorem~\ref{thmC}.
Consider the space 
$$
\bigcup_{x\in A^\infty_{\mathfrak X}} \Big(\pi_{\Sigma^+}(\Omega^x_\infty) \times \{x\} \Big) \subset \Sigma^+_{\mathfrak X}\times A^\infty_{\mathfrak X}
$$
endowed with the metric
$d_S\bigl((\underline a,x),(\underline b,y)\bigr)
=\max\{d_\Sigma(\underline a,\underline b),\, d(x,y)\},$ 
and the skew-product
\[
S_{\mathfrak X}: \bigcup_{x\in A_{\mathfrak X}} \Big(\pi_{\Sigma^+}(\Omega^x_\infty) \times \{x\} \Big)\rightarrow
\bigcup_{x\in A_{\mathfrak X}} \Big(\pi_{\Sigma^+}(\Omega^x_\infty) \times \{x\} \Big)
\;\text{given by}\;
S(\underline a,x)=\bigl(\sigma(\underline a),\, f_{a_1}(x)\bigr).
\]

\begin{lemma}\label{equiv-shadowing}
Let $R_{\mathfrak X}=(X_i,f_i)_{1\le i\le n}$ be a contractive local IFS with local attractor
$A_{\mathfrak X}$. 
The following are equivalent:
\begin{enumerate}
\item the skew-product $S_{\mathfrak X}$ satisfies the shadowing property;
    \item $R_{\mathfrak X}$ satisfies the concordant shadowing property on $A^\infty_{\mathfrak X}$.
\end{enumerate}
\end{lemma}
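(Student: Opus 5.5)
We prove the two implications separately, working throughout with the product metric $d_S$ on the phase space $\mathcal P:=\bigcup_{x}\big(\pi_{\Sigma^+}(\Omega^x_\infty)\times\{x\}\big)$. We fix the convention that $S_{\mathfrak X}(\underline a,x)=(\sigma(\underline a),f_{a_0}(x))$, so that the first symbol of $\underline a$ acts first, consistently with the indexing $x_{k+1}\approx f_{a_k}(x_k)$ of Definition~\ref{def:pseudo-orbit}. We also fix the cylinder metric on $\Sigma^+$, $d_\Sigma(\underline a,\underline b)=e^{-m}$ where $m$ is the length of the common initial block. Iterating gives $S_{\mathfrak X}^k(\underline a,x)=(\sigma^k\underline a,f^k_{\underline a}(x))$, and this is well defined on $\mathcal P$ because $\underline a\in\pi_{\Sigma^+}(\Omega^x_\infty)$.

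\textbf{(1)$\Rightarrow$(2).} Given $\vep\in(0,1)$, let $\delta$ be the shadowing constant of $S_{\mathfrak X}$ for $\vep$. Take an $(\underline a,\delta)$-pseudo-orbit $(x_k)$ in $A^\infty_{\mathfrak X}$ and form the lifted sequence $p_k=(\sigma^k\underline a,x_k)$. The main obstacle is that $p_k$ must lie in $\mathcal P$, that is, $\sigma^k\underline a$ must be an admissible future for $x_k$, and a pseudo-orbit only guarantees $x_k\in X_{a_k}$. We therefore restrict attention, as the statement implicitly does by placing everything in $A^\infty_{\mathfrak X}$, to pseudo-orbits whose lift lies in $\mathcal P$. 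The alternative is to replace $p_k$ by $(\underline c^{(k)},x_k)$ with $\underline c^{(k)}$ an admissible future of $x_k$ sharing a long prefix with $\sigma^k\underline a$, which is possible whenever $\underline a$ is significant. Since the symbolic coordinates are exact shifts, $d_S(S_{\mathfrak X}(p_k),p_{k+1})=d(f_{a_k}(x_k),x_{k+1})<\delta$. Shadowing then produces $(\underline c,x)\in\mathcal P$ with $d_\Sigma(\sigma^k\underline c,\sigma^k\underline a)<\vep<1$ for all $k$. Because $\vep<1$, this forces $c_k=a_k$ for every $k$, so $\underline c=\underline a$. Consequently $d(f^k_{\underline a}(x),x_k)<\vep$ for all $k$, which is exactly concordant shadowing by $x\in A^\infty_{\mathfrak X}$.

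\textbf{(2)$\Rightarrow$(1).} Let $(p_k)=((\underline a^{(k)},x_k))$ be a $\delta$-pseudo-orbit of $S_{\mathfrak X}$, and choose $\delta<e^{-1}$ small. Closeness of $\sigma(\underline a^{(k)})$ to $\underline a^{(k+1)}$ forces agreement of their first symbols, so the sequence $a_k:=a^{(k)}_0$ satisfies $x_k\in X_{a_k}$ and $d(f_{a_k}(x_k),x_{k+1})<\delta$. Hence $(x_k)$ is an $(\underline a,\delta)$-pseudo-orbit with $\underline a=(a_k)$. More precisely, if $\delta<e^{-M}$, then $\sigma^k\underline a$ and $\underline a^{(k)}$ agree on the first $M$ symbols, by a standard telescoping of the shift-closeness conditions. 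Applying (2) with $\vep$ gives $x$ with $d(f^k_{\underline a}(x),x_k)<\vep$ for all $k$, and $(\underline a,x)\in\mathcal P$ since $\underline a$ is realised along the orbit of $x$. Finally, $d_S(S^k_{\mathfrak X}(\underline a,x),p_k)\le\max\{e^{-M},\vep\}$, and choosing $M$ with $e^{-M}<\vep$ completes the argument.

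The only genuinely delicate point is the admissibility of the lifts in (1)$\Rightarrow$(2). All remaining steps are bookkeeping with the cylinder metric.
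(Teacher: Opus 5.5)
Your scheme is the paper's: lift an $(\underline a,\delta)$-pseudo-orbit to $p_k=(\sigma^k\underline a,x_k)$ and use $\vep<1$ to force the shadowing symbol sequence to be $\underline a$. Conversely, read off an $(\underline a,\delta)$-pseudo-orbit from the fibre coordinates of an $S_{\mathfrak X}$-pseudo-orbit. Your (2)$\Rightarrow$(1) is more careful than the paper's. The paper infers $\underline a^{(k)}=\sigma^k(\underline a^{(0)})$ from $d_\Sigma(\sigma(\underline a^{(k)}),\underline a^{(k+1)})<\tfrac12$, but that inequality only matches leading symbols. Your telescoping with $\delta<e^{-M}<\vep$ is the right repair.

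The gap is in (1)$\Rightarrow$(2), exactly where you flag it, and neither of your remedies closes it. The paper does not close it either: it simply declares the lift an $S_{\mathfrak X}$-pseudo-orbit ``in $\Sigma^+_{\mathfrak X}\times A_{\mathfrak X}$'', a set on which $S_{\mathfrak X}$ is not everywhere defined.

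\emph{Restricting to admissible lifts.} This changes (2): $x_k\in A^\infty_{\mathfrak X}$ only gives $x_k$ \emph{some} infinite future, not the future $\sigma^k\underline a$. Moreover, it is not the version of (2) you invoke in (2)$\Rightarrow$(1). There $x_k$ is only known to admit $\underline a^{(k)}$, which agrees with $\sigma^k\underline a$ on $M$ symbols, so $(\sigma^k\underline a,x_k)$ need not lie in $\mathcal P$ and the restricted hypothesis does not apply. The two halves therefore use incompatible readings of (2), and the equivalence is proved under neither.

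\emph{Choosing admissible futures $\underline c^{(k)}$ of $x_k$ with long prefix $a_ka_{k+1}\cdots$.} This is asserted without proof and is false in general. A pseudo-orbit gives $x_k\in X_{a_k}$ and $d(f_{a_k}(x_k),x_{k+1})<\delta$, but not $f_{a_k}(x_k)\in X_{a_{k+1}}$. Significance of $\underline a$ concerns some point of $I(\underline a)$, not $x_k$. For example, take $\frac{1+\sqrt5}{2}<\beta<2$, $f_0(x)=x/\beta$ on $X_0=[0,1]$ and $f_1(x)=(x+1)/\beta$ on $X_1=[0,\beta-1]$, so that $A^\infty_{\mathfrak X}=[0,1]$. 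The point $c=\beta^2-\beta-1$ admits $\underline a=(1,1,0,0,\dots)$. For $x_0\in(c,\beta-1]$, the sequence $x_0,\beta-1,1,\beta^{-1},\beta^{-2},\dots$ is an $(\underline a,\delta)$-pseudo-orbit whenever $\delta>(x_0-c)/\beta$. Yet $x_0$ has no admissible future starting with $11$, so no lift of the kind you propose exists at any scale $\delta<1$. The pseudo-orbit is in fact shadowed by $c$, but your argument cannot see this.

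\emph{Replacing $x_k$ by a nearby point of $I(\sigma^k\underline a)$.} This would require exactly condition (B2) of Theorem~\ref{thm:Y}, which is equivalent to the concordant shadowing you are trying to prove, so it is circular.

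What is missing is either an argument that shadowing of $S_{\mathfrak X}$ controls pseudo-orbits whose lifts leave $\mathcal P$, or an extra hypothesis forcing $f_{a_k}(x_k)\in X_{a_{k+1}}$ once $\delta$ is small. An example of such a hypothesis is a Markov condition like (H2) of Proposition~\ref{thm:criterion-combinatorial-stability}, imposed for all pairs $i,j$.
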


\begin{proof}
We prove both implications separately.

\smallskip
$(1)\Rightarrow (2)$

Assume that $S_{\mathfrak X}$ has the shadowing property. 
Fix $0<\varepsilon<\frac12$ and let $\delta>0$ be given by the shadowing property.
Given any $\underline a\in \Sigma_{\mathfrak X}^+$ and any $(\underline a,\delta)$-pseudo-orbit $(x_k)_{k\ge 0}\subset A_{\mathfrak X}^\infty$
for the local IFS,
it is straightforward to check that the sequence $(\bigl(\sigma^k(\underline a),\,x_k\bigr))_{k\ge 0}$ in $\Sigma^+_{\mathfrak X}\times A_{\mathfrak X}$ 
is a $\delta$-pseudo-orbit for $S_{\mathfrak X}$. By the shadowing property for $S_{\mathfrak X}$, there exists
$(\underline{\tilde a},x)\in \Sigma^+_{\mathfrak X}\times A_{\mathfrak X}$ such that
\begin{equation}
    \label{eq:lemmaimpliskew1}
    d_S\bigl(
\bigl(\sigma^k(\underline{\tilde a}),\,f^k_{\underline{\tilde a}}(x_k)\bigr)
,\,\bigl(\sigma^k(\underline a),\,x_k\bigr)\bigr)
=
d_S\bigl(S_{\mathfrak X}^k(\underline{\tilde a},x),\,\bigl(\sigma^k(\underline a),\,x_k\bigr)\bigr)
<\vep
\end{equation}
for every $k\ge 0$.
In particular, 
$ 
d_\Sigma\bigl(\sigma^k(\underline{\tilde a}),\,\sigma^k(\underline a)\bigr)
\le \frac12
$ 
for each $k\ge 0$, which ensures that $\tilde a_{k+1}=a_{k+1}$ for each $k\ge 0$ and, consequently,
$\underline{\tilde a}=\underline a$.
Finally, from ~\eqref{eq:lemmaimpliskew1} and the definition 
of $d$, we also obtain
$d\bigl(f_{\underline{\tilde a}}^k(x),\,x_k\bigr)
=d\bigl(f_{\underline a}^k(x),\,x_k\bigr)
<\vep$ for each $k\ge 0$.
This proves that  $(x_k)_{k\ge 0}$ is $(\underline a,\varepsilon)$-shadowed by $x$. Thus
$R_{\mathfrak X}$ satisfies the concordant shadowing property.

\smallskip
\smallskip
$(2)\Rightarrow (1)$

Fix $\varepsilon>0$. Let $0<\delta<\frac12$ be such that it satisfies the requirements of the concordant shadowing property of the local IFS $R_{\mathfrak X}$ on the local attractor at scale $\vep$. 
It is clear that any $\delta$-pseudo-orbit $((\underline a^{(k)},x_k))_{k\ge 0}$  on  
$\Sigma^+_{\mathfrak X}\times A_{\mathfrak X}$  for $S_{\mathfrak X}$ is such that 
\begin{align}
\label{eq:base-close}
d_\Sigma\bigl(\sigma(\underline a^{(k)}),\,\underline a^{(k+1)}\bigr) <\frac12,
\quad \text{and}\quad 
d\bigl(f_{a^{(k)}_1}(x_k),\,x_{k+1}\bigr) <\delta
\end{align}
for every $k\ge 0$. The first inequality above implies that $a^{(k)}_2=a^{(k+1)}_1$ for every $k\ge 0$.
In other words, the sequence $(\underline a^{(k)})$ in $\Sigma^+_{\mathfrak X}$ is a true orbit, hence
$\underline a^{(k)}=\sigma^k(\underline a^{(0)})$
for every $k\ge 0.$ 
For notational simplicity we shall write simply $\underline a=\underline a^{(0)}$. Then the second inequality in ~\eqref{eq:base-close} becomes
$ 
d\bigl(f_{a_{k+1}}(x_k),\,x_{k+1}\bigr)<\delta
$ for all $k\ge 0,$ 
meaning that $(x_k)_{k\ge 0}$ is a $(\underline a,\delta)$-pseudo-orbit for the local IFS, hence 
$(\underline a,\vep)$-shadowed by some point $x\in A_{\mathfrak X}$.
Therefore, 
\[
d_S\bigl(S^k(\underline a,x),\,(\underline a^{(k)},x_k)\bigr)
=
d_S\bigl((\underline a^{(k)},f_{\underline a}^k(x)),\,(\underline a^{(k)},x_k)\bigr)
=
d\bigl(f_{\underline a}^k(x),x_k\bigr)
<\vep
\]
for every $k\ge 0$.
We conclude that every $\delta$-pseudo-orbit of $S_{\mathfrak X}$ is $\vep$-shadowed, and so $S_{\mathfrak X}$ satisfies the shadowing property. This finishes the proof of the lemma.
\end{proof}

\subsection{On the structure of pseudo-orbits}
\label{sec:nearbyPO}

In this subsection, we prove an instrumental result, that pseudo-orbits for a local IFS are shadowed by pseudo-orbits of nearby local IFSs. This is a non-trivial fact due to the restriction on the domains, which follows as a consequence of a deeper result: with respect to the metric $D$ on the space $\mathcal F$, every local IFS sufficiently close to $R_{\mathfrak X}=(X_j,f_j)_{1\le j \le n}$ determines pseudo-orbits relative to $R_{\mathfrak X}$.
Recall the role of the constant $L>0$ in ~\eqref{eq:distanceS}.

\begin{proposition}
    \label{prop.key}
    Let $R_{\mathfrak X}=(X_j,f_j)_{1\le j \le n}$ be a contractive local IFS. For every $0<\delta< L$ and each contractive local IFS $R_{\mathfrak Y}=(Y_j,g_j)_{1\le j \le n}$ which is $\delta$-close to $R_{\mathfrak X}$ the following holds: 
    if $(y_{{i}})_{i \geq 0}$ is a $(\underline a,\delta)$-pseudo orbit for $R_{\mathfrak Y}$ then there exists an $(\underline a,5\delta)$-pseudo orbit $(x_{{i}})_{i \geq 0}$ for $R_{\mathfrak X}$ satisfying
     $d(x_{{j}},  y_{{j}})< \delta$ for $j\geq 0$. 
\end{proposition}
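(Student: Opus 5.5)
The plan is to build the $R_{\mathfrak X}$-pseudo-orbit by transporting each point $y_i$ back to the domains of $R_{\mathfrak X}$ through the homeomorphisms in the Skorohod-type distance $d_{\mathcal S}$. Since $D(R_{\mathfrak X},R_{\mathfrak Y})<\delta<L$, for each $j$ the quantity $d_{\mathcal S}(f_j,g_j)$ is strictly less than $L$. Hence we are not in the ``otherwise'' branch of \eqref{eq:distanceS}, so $\mathcal H(X_j,Y_j)\neq\emptyset$ and $\mathcal H(Y_j,X_j)\neq\emptyset$. Using $d^0_{\mathcal S}(g_j,f_j)<\delta$, we choose for each $j$ a homeomorphism $\xi_j\colon Y_j\to X_j$ with
\[
d_{C^0}(\xi_j, id_{Y_j})+d_{C^0}(g_j,f_j\circ\xi_j)<\delta .
\]
In particular $d(\xi_j(y),y)<\delta$ and $d(g_j(y),f_j(\xi_j(y)))<\delta$ for every $y\in Y_j$. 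The strict inequalities require attention. If the infimum equals some $\eta<\delta$, we pick $\xi_j$ realizing a value below $\delta$, which is possible because the infimum is strictly below $\delta$.

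Next, given the $(\underline a,\delta)$-pseudo-orbit $(y_i)$ for $R_{\mathfrak Y}$, we have $y_i\in Y_{a_i}$, and we set $x_i:=\xi_{a_i}(y_i)\in X_{a_i}$. This immediately gives $x_i\in X_{a_i}$, the admissibility requirement of Definition~\ref{def:pseudo-orbit}, and also $d(x_i,y_i)<\delta$ for every $i\ge 0$.

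It remains to estimate the pseudo-orbit error with the triangle inequality:
\[
d\bigl(f_{a_i}(x_i),x_{i+1}\bigr)\le d\bigl(f_{a_i}(\xi_{a_i}(y_i)),g_{a_i}(y_i)\bigr)+d\bigl(g_{a_i}(y_i),y_{i+1}\bigr)+d\bigl(y_{i+1},\xi_{a_{i+1}}(y_{i+1})\bigr).
\]
Each of the three terms is less than $\delta$, so the total is less than $3\delta\le5\delta$. Thus $(x_i)$ is an $(\underline a,5\delta)$-pseudo-orbit for $R_{\mathfrak X}$, with room to spare. The contractivity hypothesis is not needed; the factor $5$ presumably absorbs an alternative route, for instance using $\zeta\in\mathcal H(X_j,Y_j)$ and its inverse, where one also pays for the modulus of $\zeta^{-1}$ together with the Hausdorff distance between domains.

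The main obstacle is choosing the direction of the homeomorphism correctly. A $\zeta\colon X_j\to Y_j$ from $d^0_{\mathcal S}(f_j,g_j)$ controls $f_j-g_j\circ\zeta$ on $X_j$, and transferring this to points of $Y_j$ needs $\zeta^{-1}$, whose displacement $d(\zeta^{-1}(y),y)=d(x,\zeta(x))<\delta$ is still controlled. One then bounds $d(f_j(\zeta^{-1}y),g_j(y))<\delta$, yielding the same $3\delta$ estimate. Either way, the symmetrized definition $d_{\mathcal S}=\max\{d^0_{\mathcal S}(f_i,g_i),d^0_{\mathcal S}(g_i,f_i)\}$ is exactly what makes the construction work, and the remaining step is to verify that the strict-inequality bookkeeping at the infimum holds.
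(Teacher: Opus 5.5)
Your proof is correct, and it is a streamlined version of the paper's argument. The paper first chooses $x_j\in X_{a_j}$ using only $\dist_H(X_{a_j},Y_{a_j})<\delta$. Only afterwards does it bring in the homeomorphism $\zeta_{a_j}\colon Y_{a_j}\to X_{a_j}$, which goes in the same direction as your $\xi_j$. It writes $x_j=\zeta_{a_j}(\tilde y_j)$ and gets $d(\tilde y_j,y_j)<2\delta$. It then needs the contraction rate $\lambda$ of $R_{\mathfrak Y}$ to bound $d(g_{a_j}(\tilde y_j),g_{a_j}(y_j))\le 2\lambda\delta$. This gives $d(f_{a_j}(x_j),g_{a_j}(y_j))<3\delta$ and a total error $<5\delta$, which is where the constant $5$ comes from (not from the modulus of $\zeta^{-1}$, as you guessed).

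By setting $x_j:=\xi_{a_j}(y_j)$ directly you force $\tilde y_j=y_j$, so the Lipschitz step disappears. This has three consequences:
\begin{itemize}
\item you obtain the sharper bound $3\delta$;
\item you never use the Hausdorff term of $D$, which is in any case dominated by $d_{C^0}(\xi_j,id_{Y_j})$ since $\xi_j$ is a bijection $Y_j\to X_j$;
\item you use no contractivity of either system, so the statement holds for all local IFSs in $\mathcal F_0$ (only $\delta<L$ is needed, to exclude the $L$-branch).
\end{itemize}
The later use of this proposition needs only the existence of such an $(x_j)$, not the freedom to pick $x_j$ as an arbitrary Hausdorff-nearby point, so your version loses nothing. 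The strict-inequality issue you flag is not an obstacle: if an infimum is $<\delta$, some element of the set is already $<\delta$, which is exactly what you use.
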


\begin{proof}
Fix $0<\delta<L$.
    
    \begin{figure}[H]
        \centering
        \includegraphics[width=0.7\linewidth]{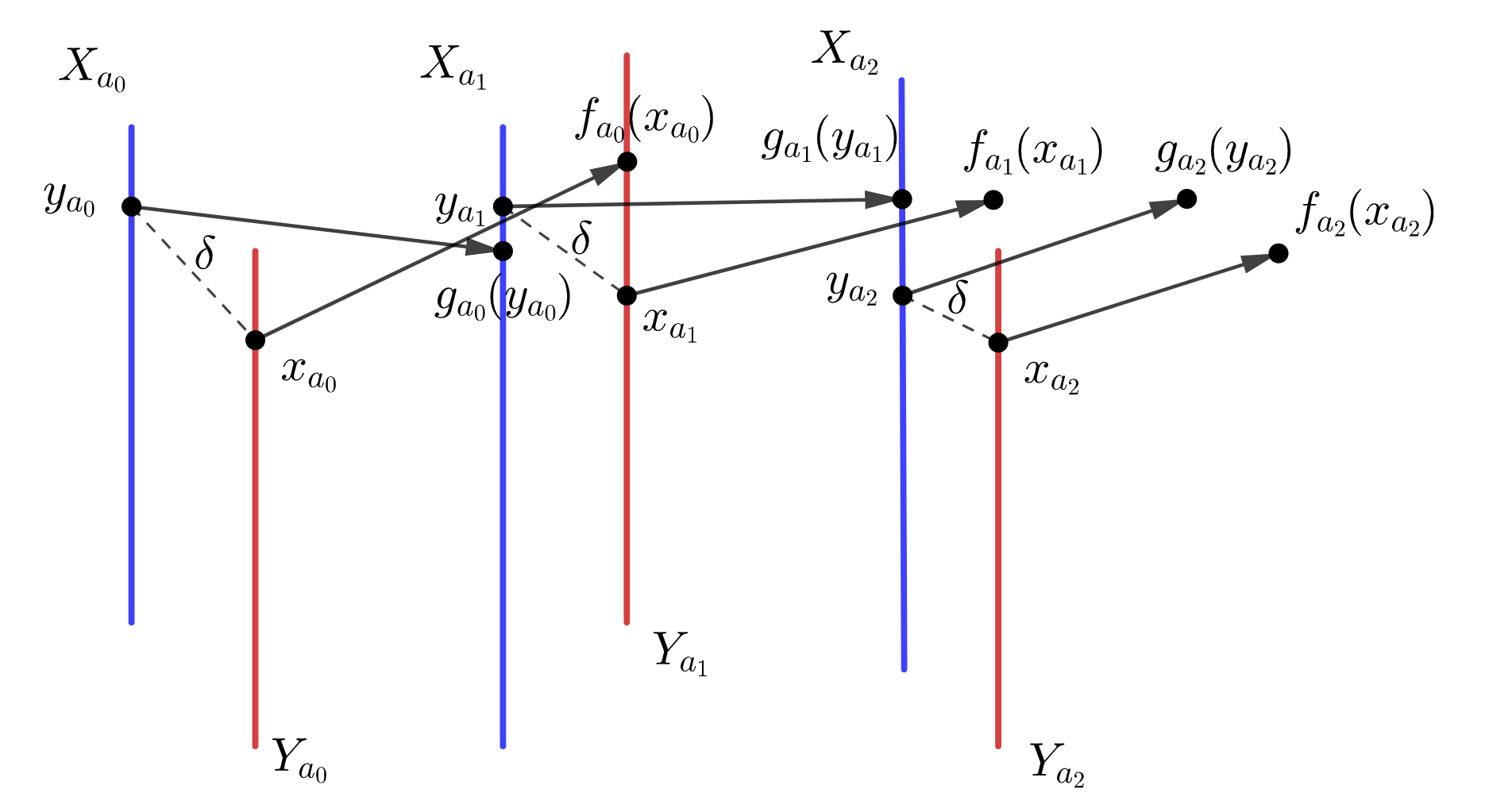}
        \caption{Shadowing scheme}
        \label{fig:shadowing}
    \end{figure}
    
    Let $(y_{{i}})_{i \geq 0}$ be a $(\underline a,\delta)$-pseudo orbit for $R_{\mathfrak Y}$. Then,
    \[d(y_{{j+1}},g_{{j}}(y_{{j}}))<\delta \;\quad \text{for every }\,  j \geq 0 \text{ and } y_{{j}} \in Y_{a_{j}}.\]
    Suppose that $R_{\mathfrak Y}$ is $\delta$-close to $R_{\mathfrak X}$. Hence $dist_H(X_{a_j},Y_{a_j})< \delta$ and, consequently, for each $j\ge 0$ there exists a point $  x_{{j}} \in X_{a_{j}}$ with 
    \begin{equation}
        \label{eq:esstimhaus}
        d(  x_{{j}},  y_{{j}})< \delta.
    \end{equation}
    By triangular inequality, 
    \begin{equation}
        \label{eq:236789}
        d(f_{a_{0}}( x_{{0}}),  x_{{1}}) \leq d(f_{a_{0}}(x_{{0}}), g_{a_{0}}( y_{{0}})) +d( g_{a_{0}}( y_{{0}}), y_{{1}}) +d( y_{{1}},   x_{{1}}).
    \end{equation}
    By construction, $d( y_{{1}},   x_{{1}})<\delta$ and $d( g_{a_{0}}( y_{{0}}), y_{{1}})<\delta$.   
As $D(R_{\mathfrak X},R_{\mathfrak Y})<\delta$ there exists an homeomorphism 
$\zeta_{a_{0}}: Y_{a_{0}}\to X_{a_{0}}$ so that $d_{C^0}(\zeta_{a_0}, id_{Y_{a_0}})<\delta$
and $d_{C^0}(f_{a_0}\circ \zeta_{a_0}, g_{a_0})<\delta$.
Then one can write $x_{0}=\zeta_{a_0}(\tilde y_0)$ for some $\tilde y_0 \in Y_{a_0}$ and, consequently,
$$
d(y_0, \tilde y_0) \le d(x_0, \tilde y_0)+ d(x_0, y_0) \le d_{C^0}(\zeta_{a_0}, id_{Y_{a_0}})+\delta <2\delta
$$
and
\begin{align*}
d(f_{a_{0}}(x_{{0}}), g_{a_{0}}( y_{{0}}))
& = d(f_{a_{0}}(\zeta_{a_0}(\tilde y_{{0}})), g_{a_{0}}( y_{{0}})) \\
& \le 
d(f_{a_{0}}(\zeta_{a_0}(\tilde y_{{0}})), g_{a_{0}}( \tilde y_{{0}}))
+ 
d(g_{a_{0}}( \tilde y_{{0}}), g_{a_{0}}( y_{{0}})) \\
& \le \delta + 2\lambda \delta  < 3\delta 
\end{align*}
(here $\lambda\in (0,1)$ is a contraction rate for $R_{\mathfrak Y}$).
Together with ~\eqref{eq:236789} we obtain that $d(f_{a_{0}}( x_{{0}}),  x_{{1}})<5\delta$.

The proof follows by applying the same argument used recursively - using ~\eqref{eq:esstimhaus} and that 
$x_{j}=\zeta_{a_j}(\tilde y_j)$ for some $\tilde y_j\in Y_{a_j}$ and some homeomorphism $\zeta_{a_j}$ that is $C^0$-close to the identity - in order to produce a $(\underline a,5 \delta)$-pseudo orbit for $R_{\mathfrak X}$. 
\end{proof}

Proposition~\ref{prop.key} yields the following consequence.

\begin{corollary}\label{cor:shadowclose}
    If the contractive local IFS $R_{\mathfrak X}=(X_j,f_j)_{1\le j \le n}$ satisfies the concordant shadowing property then for every $\vep>0$ there exists $\delta>0$ so that for each contractive local IFS $R_{\mathfrak Y}=(Y_j,g_j)_{1\le j \le n}$ which is $\delta$-close to $R_{\mathfrak X}$, every $(\underline a,\delta)$-pseudo orbit of $R_{\mathfrak Y}$ is $(\underline a,\vep)$-shadowed by an orbit of $R_{\mathfrak X}$.
    In particular $\hat \Sigma_{\mathfrak X}^+ \supseteq \hat \Sigma_{\mathfrak Y}^+$.
\end{corollary}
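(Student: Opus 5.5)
The plan is to combine Proposition~\ref{prop.key} with the concordant shadowing property of $R_{\mathfrak X}$, adjusting the constants so that the errors add up to less than $\vep$. Fix $\vep>0$. By the concordant shadowing property, or equivalently its finite version in Lemma~\ref{le:finiteshadowequiv}, there is $\eta>0$ such that every $(\underline a,\eta)$-pseudo-orbit of $R_{\mathfrak X}$ is $(\underline a,\vep/2)$-shadowed by some point $x$ of $R_{\mathfrak X}$. Set $\delta=\min\{\eta/5,\vep/2,L/2\}$.

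Now let $R_{\mathfrak Y}$ be $\delta$-close to $R_{\mathfrak X}$, and let $(y_k)_{k\ge0}$ be an $(\underline a,\delta)$-pseudo-orbit of $R_{\mathfrak Y}$. Proposition~\ref{prop.key} gives an $(\underline a,5\delta)$-pseudo-orbit $(x_k)$ of $R_{\mathfrak X}$ with $d(x_k,y_k)<\delta$ for every $k$. Since $5\delta\le\eta$, the sequence $(x_k)$ is $(\underline a,\vep/2)$-shadowed by some $x$. The triangle inequality then gives
\[
d(f^k_{\underline a}(x),y_k)<\vep/2+\delta\le\vep
\]
for every $k$, which is the first assertion.

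\textbf{Main obstacle.} Concordant shadowing in Definition~\ref{def:concordant-shadowing} is stated only for pseudo-orbits lying in $A_{\mathfrak X}$, while the $x_k$ produced by Proposition~\ref{prop.key} lie only in $X_{a_k}$. I would deal with this by observing that the conclusion we need does not require $x_k\in A_{\mathfrak X}$. The argument of Lemma~\ref{le:finiteshadowequiv} runs with arbitrary points of $X_{a_k}$. Moreover, any genuine infinite $\underline a$-orbit of a contractive local IFS accumulates on points with infinite orbits, so it has a code in $\widehat\Sigma^+_{\mathfrak X}$. If the definition must be read strictly, I would first project each $x_k$ to a nearby point of $A_{\mathfrak X}$. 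This works when $y_k\in A_{\mathfrak Y}$, because upper semicontinuity (Theorem~\ref{thm:usc}-type closeness) puts $A_{\mathfrak Y}$ inside a small neighbourhood of $A_{\mathfrak X}$. The resulting $\delta$ is then shrunk accordingly.

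For the inclusion $\widehat\Sigma^+_{\mathfrak X}\supseteq\widehat\Sigma^+_{\mathfrak Y}$, take $\underline a\in\widehat\Sigma^+_{\mathfrak Y}$. There is then $y\in A^\infty_{\mathfrak Y}$ whose orbit $y_k=g^k_{\underline a}(y)$ is defined for all $k$; it is a genuine orbit, hence an $(\underline a,\delta)$-pseudo-orbit. By the first part, some $x\in X_{a_0}$ has $f^k_{\underline a}(x)$ defined for every $k$, so $\underline a$ is realized by an infinite orbit of $R_{\mathfrak X}$. It remains to see that this orbit can be taken in $A^\infty_{\mathfrak X}$. The set of points admitting the forward code $\underline a$ is $I(\underline a)$, which is compact and nonempty. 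Composing with backward codes of $A_{\mathfrak X}$ (Theorem~\ref{thmC}) together with the maximality of $A_{\mathfrak X}$ should give a point of $A_{\mathfrak X}$ in $I(\underline a)$, hence $\underline a\in\widehat\Sigma^+_{\mathfrak X}$. I expect this step, placing the shadowing point in the attractor, to need the most care. My first attempt would be a compactness argument applied to $\sigma^{-m}$-preimages of $\underline a$ as $m\to\infty$.
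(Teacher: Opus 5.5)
Your argument for the first assertion is the paper's. Proposition~\ref{prop.key} turns the $(\underline a,\delta)$-pseudo-orbit of $R_{\mathfrak Y}$ into a $\delta$-close $(\underline a,5\delta)$-pseudo-orbit of $R_{\mathfrak X}$. Concordant shadowing of $R_{\mathfrak X}$ at a smaller scale is applied to it, and the triangle inequality concludes.

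The paper does not address your ``main obstacle''. It applies concordant shadowing directly to the sequence from Proposition~\ref{prop.key} and takes the shadow in $X_{a_0}$, i.e.\ it uses the property for pseudo-orbits in the domains, as in Definition~\ref{def:finite-shadowing}. Your repairs for the strict reading of Definition~\ref{def:concordant-shadowing} do not work and should be dropped:
\begin{itemize}
\item An infinite $\underline a$-orbit does not force $\underline a\in\widehat\Sigma^+_{\mathfrak X}$. Take $X=[0,1]$, $f_1(x)=x/3$ on $X_1=[0,1]$ and $f_2(x)=x/4$ on $X_2=[1/2,1]$. Then $A_{\mathfrak X}=\{0\}$ and $\widehat\Sigma^+_{\mathfrak X}=\{(1,1,1,\dots)\}$, yet $1$ has an infinite orbit along $(2,1,1,\dots)$.
\item Projecting $x_k$ to $A_{\mathfrak X}$ can leave $X_{a_k}$, so the result need not be a pseudo-orbit along $\underline a$.
\item The $y_k$ are not assumed to lie in $A_{\mathfrak Y}$.
\item Upper semicontinuity under perturbation of the maps is Theorem~\ref{thm:shadowing-usc}, whose proof uses this very corollary. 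Theorem~\ref{thm:usc} only moves the domains.
\end{itemize}

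The genuine gap is the inclusion $\widehat\Sigma^+_{\mathfrak X}\supseteq\widehat\Sigma^+_{\mathfrak Y}$, which you leave at ``should give''; the paper asserts it without proof. Shadowing the forward orbit alone only gives $I(\underline a)\neq\emptyset$. The missing idea is to shadow the past of the point as well, as the paper does when constructing $h_{\underline a}$ in Section~\ref{sec:contractive-stable}.

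Take $(\underline b,\underline a)\in\widehat\Sigma_{\mathfrak Y}$, set $y=\pi_{\mathfrak Y}(\underline b)$, and choose $z_k\in Y_{b_{-k}}$ with $g_{b_{-1}}\circ\cdots\circ g_{b_{-k}}(z_k)=y$. The $R_{\mathfrak Y}$-orbit of $z_k$ along $(b_{-k},\dots,b_{-1})\ast\underline a$ is genuine. By the first part, some $w_k\in X_{b_{-k}}$ has an infinite $R_{\mathfrak X}$-orbit along this code. Then $u_k=f_{b_{-1}}\circ\cdots\circ f_{b_{-k}}(w_k)$ lies in $I(\underline a)$ and in the set $V_{[\underline b]_k}$ of $R_{\mathfrak X}$.

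These sets $V_{[\underline b]_k}$ are compact and decreasing in $k$, and $I(\underline a)$ is compact. So an accumulation point $u$ of $(u_k)$ lies in $\bigcap_k V_{[\underline b]_k}=\pi_{\mathfrak X}(\underline b)$ and in $I(\underline a)$. Hence $\underline b\in\Sigma^-_{\mathfrak X}$, $u=\pi_{\mathfrak X}(\underline b)\in A_{\mathfrak X}\cap I(\underline a)$, and $(\underline b,\underline a)\in\widehat\Sigma_{\mathfrak X}$, which gives the inclusion.
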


\begin{proof}
    Fix $0<\vep<L$. By the concordant shadowing property for $R_{\mathfrak X}$, that is,  for every 
    $\underline a \in \hat \Sigma_{\mathfrak X}$ and $0< \eta< \vep/2$ there exists $0<\delta<\vep/10$ such that all $(\underline a,5\delta)$-pseudo orbits for $R_{\mathfrak X}$ are $(\underline a,\eta)$-shadowed. 
Then
    Proposition~\ref{prop.key} implies that if $(y_{{i}})_{i \geq 0}$ is a $(\underline a,\delta)$-pseudo orbit for $R_{\mathfrak Y}$ then there exists a $(\underline a,5\delta)$-pseudo orbit $(x_{{i}})_{i \geq 0}$  for $R_{\mathfrak X}$ such that $d(x_{{j}},  y_{{j}})< \delta$ for every $j\geq 0$.
    Hence, there exists $x \in X_{a_{0}}$ such that 
    \[d(f_{\underline a}^k(x), x_{{k}}) < \eta, \; \forall k \geq 1,\]
    where we adopted the notation $f_{\underline a}^k(x):=f_{a_{k-1}}(f_{a_{k-2}}(\cdots(f_{a_{0}}(x))))$ for $k \geq 1$.
    Moreover,
    \[d(f_{\underline a}^k(x), y_{{k}})\leq  d(f_{\underline a}^k(x), x_{{k}}) + d(x_{{k}}, y_{{k}}) < \eta +5\delta<\vep
    \]
    for every $k \geq 1,$ thus proving the corollary.
\end{proof}

\begin{remark}
\label{rmk:usc}
    By Corollary~\ref{cor:shadowclose}, the map ${\mathfrak X} \mapsto \hat \Sigma^+_{\mathfrak X}$ is upper-semicontinuous (in Vietoris topology) at all $\mathfrak X$ so that the  local IFS $R_{\mathfrak X}$ has concordant shadowing property. In consequence, if the map is not upper semicontinuous at $\mathfrak X$ then $R_{\mathfrak X}$ does not satisfy the concordant shadowing property.
\end{remark}

We also obtain the following immediate consequence of Corollary~\ref{cor:shadowclose}.

\begin{corollary}
\label{cor:rob-shadow+}
    If $R_{\mathfrak X}=(X_j,f_j)_{1\le j \le n}$ satisfies the robust concordant shadowing property then there exists an open neighborhood $\mathcal V$ of $R_{\mathfrak X}$ so that 
    $\hat \Sigma_{\mathfrak Y}^+ = \hat \Sigma_{\mathfrak X}^+$ for every $R_{\mathfrak Y} \in \mathcal V$.
\end{corollary}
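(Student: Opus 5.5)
The plan is to obtain the equality as two inclusions, $\hat\Sigma^+_{\mathfrak Y}\subseteq\hat\Sigma^+_{\mathfrak X}$ and $\hat\Sigma^+_{\mathfrak X}\subseteq\hat\Sigma^+_{\mathfrak Y}$. The first is already essentially Corollary~\ref{cor:shadowclose}, and the second should follow by running the same argument with the roles of $R_{\mathfrak X}$ and $R_{\mathfrak Y}$ exchanged. The robust concordant shadowing hypothesis is exactly what makes this exchange possible, since it supplies shadowing for $R_{\mathfrak Y}$ rather than only for $R_{\mathfrak X}$.

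\emph{Step 1.} Fix a small $\vep>0$ (say $\vep<L$). Robust concordant shadowing gives an open neighborhood $\mathcal V_1$ of $R_{\mathfrak X}$ and $\delta_1>0$ such that every $(\underline a,\delta_1)$-pseudo-orbit of any $R_{\mathfrak Y}\in\mathcal V_1$ is $(\underline a,\vep)$-shadowed by an orbit of $R_{\mathfrak Y}$. In particular $R_{\mathfrak X}\in\mathcal V_1$ itself has concordant shadowing, so Corollary~\ref{cor:shadowclose} applies. It yields $\delta_2>0$ such that, for every $R_{\mathfrak Y}$ that is $\delta_2$-close to $R_{\mathfrak X}$, every $(\underline a,\delta_2)$-pseudo-orbit of $R_{\mathfrak Y}$ is concordantly shadowed by an orbit of $R_{\mathfrak X}$. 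Applying this to a genuine $R_{\mathfrak Y}$-orbit $(y_k)$ with itinerary $\underline a\in\hat\Sigma^+_{\mathfrak Y}$, which is a $(\underline a,\delta)$-pseudo-orbit for every $\delta$, produces a point whose $\underline a$-orbit under $R_{\mathfrak X}$ is defined for all times. Hence $\underline a\in\hat\Sigma^+_{\mathfrak X}$.

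\emph{Step 2.} For the reverse inclusion, take $\underline a\in\hat\Sigma^+_{\mathfrak X}$ and a genuine orbit $x_{k+1}=f_{a_k}(x_k)$ in $A^\infty_{\mathfrak X}$. Proposition~\ref{prop.key} is symmetric in the two systems, because $D$ is symmetric and $d_{\mathcal S}$ is the maximum of the two one-sided terms. So if $D(R_{\mathfrak X},R_{\mathfrak Y})<\eta$ with $5\eta<\delta_1$, it gives a $(\underline a,5\eta)$-pseudo-orbit $(y_k)$ of $R_{\mathfrak Y}$ with $d(x_k,y_k)<\eta$. Since $R_{\mathfrak Y}\in\mathcal V_1$ and $5\eta<\delta_1$, this pseudo-orbit is $(\underline a,\vep)$-shadowed by some $y$ with $g^k_{\underline a}(y)$ defined for all $k$. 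That gives $\underline a\in\hat\Sigma^+_{\mathfrak Y}$.

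Finally, take $\mathcal V$ to be the intersection of $\mathcal V_1$ with the $D$-ball of radius $\min\{\delta_2,\eta\}$ about $R_{\mathfrak X}$.

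\emph{Main obstacle.} The delicate point is the membership requirement in $\hat\Sigma^+$: one must check that the shadowing point lies in $A^\infty$ with a past code, and not merely that its forward orbit is defined. I would handle this by shadowing bi-infinite data. Take $(\underline b,\underline a)\in\hat\Sigma$ and shadow the concatenated pseudo-orbit along the itinerary $[\underline b]_m\ast\underline a$ for every $m$. Then pass to a limit using compactness, together with the nested-compactness argument of Lemma~\ref{le:finiteshadowequiv}, to obtain a point of the maximal invariant set $A_{\mathfrak Y}$ that carries the itinerary $\underline a$.
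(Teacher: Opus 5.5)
Your proposal is correct and follows the paper's route. The paper records this as an immediate consequence of Corollary~\ref{cor:shadowclose}: that corollary gives $\hat\Sigma^+_{\mathfrak Y}\subseteq\hat\Sigma^+_{\mathfrak X}$, and the reverse inclusion comes from the same argument with the roles exchanged, which works because robust concordant shadowing gives constants uniform over the neighborhood. Your concatenation along $[\underline b]_m\ast\underline a$ (the same device the paper uses in the proof of Theorem~\ref{thm:combinatorial-topological}) then correctly produces a point of the local attractor with an infinite $\underline a$-orbit, via limit points in the nested intersection $\bigcap_m V_{[\underline b]_m}$, which is the detail needed for membership in $\hat\Sigma^+$ and which the paper leaves implicit.
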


%%%%%%%%%%%%%%%%%%%%%%%%%%%%%%%%%
\section{Semicontinuity of the attractor and the code space }\label{sec:usc}

This section is devoted to the proof of
Theorems~\ref{thm:shadowing-usc} and ~\ref{thm:usc}.

\subsection{Proof of Theorem~\ref{thm:shadowing-usc}}
The proof builds over the rich structure of pseudo-orbits (recall Subsection~\ref{sec:nearbyPO}).
Assume that $R_{\mathfrak X}=(X_j,f_j)_{1\le j \le n}$ satisfies the concordant shadowing property. 
Then, 
for any $\vep>0$ there exists $\delta=\delta_\vep>0$ such that
letting $\mathcal V_\delta$ denote the $\delta$-neighborhood of $R_{\mathfrak X}$, the following property holds: 
for each $R_{\mathfrak Y}=(Y_j,g_j)_{1\le j \le n}$ in $\mathcal V_\delta$, every $(\underline a,\delta)$-orbit of 
$R_{\mathfrak Y}$ is $(\underline a,\delta)$-shadowed by an orbit of $R_{\mathfrak X}$
(recall Corollary~\ref{cor:shadowclose}).

One knows that $V_{[\underline b]_k}(Y_{b_{-k}}) \neq \emptyset$ for every $k\ge 1$ and every $\underline b\in \Sigma^-_{\mathfrak Y}$. In particular, there exists $z_k\in Y_{b_{-k}}$ 
so that
$$
y_k=g_{b_0} \circ g_{b_{-1}} \circ \dots \circ g_{b_{-k}}(z_k) \in Y_{b_0}.
$$
By the assumptions, there exists $x_k \in X_{b_{-k}}$ that $(\underline a,\vep)$-shadows the previous orbit, and so
$ 
f_{b_0} \circ f_{b_{-1}} \circ \dots \circ f_{b_{-k}}(x_k) \in X_{b_0}.
$ 
This ensures that  $V_{[\underline b]_k}(X_{b_{-k}}) \neq \emptyset$ for every $k\ge 1$.  As $\underline b\in \Sigma^-_{\mathfrak Y}$ was chosen arbitrary, this proves that
$\Sigma_{\mathfrak Y}^- \subset \Sigma_{\mathfrak X}^-$.
\smallskip

Now, we use the maximality of $\Sigma_{\mathfrak X}^-$ to prove the upper semicontinuity of the local attractor at $R_{\mathfrak X}$. In fact, notice that for each $k\ge 0$ the map 
$$
U_n \colon  R_{\mathfrak Y} \mapsto \bigcup_{\underline b\in \Sigma^-_{\mathfrak Y}} V_{[\underline b]_k}(Y_{b_{-k}})
$$
is upper-semicontinuous at $R_{\mathfrak X}$, by maximality of $\Sigma_{\mathfrak X}^-$ and continuity of the set functions $R_{\mathfrak Y} \mapsto V_{[\underline b]_k}(Y_{b_{-k}})$, for each $\underline b\in \Sigma_{\mathfrak Y}^-$.
This ensures that the local attractor map
$$
U: R_{\mathfrak Y} \mapsto A_{\mathfrak Y}=\bigcap_{k\ge 0} \bigcup_{\underline b\in \Sigma^-_{\mathfrak Y}} V_{[\underline b]_k}(Y_{b_{-k}})
= \inf_{n\ge 1} U_n(R_{\mathfrak Y})
$$
is upper-semicontinuous at $R_{\mathfrak X}$. This finishes the proof of the theorem. \hfill $\square$

\subsection{Proof of Theorem~\ref{thm:usc}}

Let $(X,d)$ be a compact metric space and let $(f_j)_{1\le j \le n}$ be a finite family of self maps on $X$ so that $f_j$ is a contraction for each $1\le j \le n$.
We aim at proving that the local attractor map $(K^*(X))^n \ni \mathfrak  X    \mapsto A_{\mathfrak X}$
	is upper-semicontinuous in the Vietoris topology: 
    given $\mathfrak  X \in (K^*(X))^n$ and $\vep>0$ there exists $\delta>0$ such that if $\max_{1\leq j \leq n} \operatorname{dist}_H(Y_j, X_j) < \delta$ then $A_{\mathfrak Y} \subset A_{\mathfrak X}^{\varepsilon}$. 
	
    Suppose, by contradiction, this is not the case. Then, there are $\mathfrak  X \in (K^*(X))^n$ and $\vep>0$ such that for any $m\geq 1$, there exists a collection of compact sets
$\mathfrak{ Y}_m \in (K^*(X))^n$ such that 
\begin{equation}
    \label{eq.contradii}
\max_{1\leq j \leq n} \operatorname{dist}_H(Y_j^m, X_j) < 1/m \quad \text{but} \quad   A_{\mathfrak Y_m} \not\subset A_{\mathfrak X}^{\varepsilon}.
\end{equation}
The second expression in  ~\eqref{eq.contradii} ensures that there exists $x_m \in A_{\mathfrak{ Y}_m}$ with $d(x_m,  A_{\mathfrak X}) \geq \varepsilon$.
Consider a sequence $\underline b^m = (\ldots, b_{-2}^{m},  b_{-1}^{m})$ in the code space $\Sigma_{{\mathfrak Y}_m}^-\subset \Sigma^-=\{1,2,\dots, n\}^{-\mathbb N}$ such that $\pi_{{\mathfrak Y}_m}(\underline b^m)=x_m$. In particular, 
\begin{equation}\label{ex:xnaprox}
  x_m =  \lim_{k\to \infty } f_{b_{-1}^{m}} \circ f_{b_{-2}^{m}} \circ \cdots \circ f_{b_{-k}^{m}} (y_{k}^{m}),  
\end{equation}
for some $y_{k}^{m}\in Y_{b_{-k}^{m}}^m$.

\begin{lemma}\label{le:primeiraaprox}
There exists a strictly increasing sequence
$(m_i)_{i\ge 1}$ and a sequence
$\underline b^*=(\ldots,b_{-2}^*,b_{-1}^*)\in \Sigma^-$ such that, given $i\ge 1$ one has that $b_{-j}^{m_i}=b_{-j}^*$ for any $1\le j\le i$.
\end{lemma}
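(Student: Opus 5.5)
This is a diagonal (Cantor) extraction in the compact space $\Sigma^-=\{1,\dots,n\}^{-\mathbb N}$, which only uses that the alphabet is finite. I will build, by induction on $i$, nested infinite index sets $\mathbb N\supset M_1\supset M_2\supset\cdots$ and symbols $b^*_{-1},b^*_{-2},\dots\in\{1,\dots,n\}$ with the following property: for every $m\in M_i$ one has $b^m_{-j}=b^*_{-j}$ for all $1\le j\le i$.

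For the base step, the map $m\mapsto b^m_{-1}$ sends the infinite set $\mathbb N$ into the finite set $\{1,\dots,n\}$. By the pigeonhole principle some symbol, which I call $b^*_{-1}$, has an infinite preimage $M_1$. For the inductive step, suppose $M_i$ is infinite and $b^*_{-1},\dots,b^*_{-i}$ have been chosen. Apply the pigeonhole principle to $m\mapsto b^m_{-(i+1)}$ restricted to $M_i$. This gives a symbol $b^*_{-(i+1)}$ and an infinite set $M_{i+1}\subset M_i$ on which $b^m_{-(i+1)}=b^*_{-(i+1)}$. Since $M_{i+1}\subset M_i$, the agreement on the first $i$ coordinates is inherited, so the inductive property holds at level $i+1$.

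Next I extract the diagonal sequence. Choose $m_1\in M_1$, and recursively choose $m_{i+1}\in M_{i+1}$ with $m_{i+1}>m_i$; this is possible because each $M_{i+1}$ is infinite. Then $(m_i)$ is strictly increasing. Set $\underline b^*=(\dots,b^*_{-2},b^*_{-1})\in\Sigma^-$. Since $m_i\in M_i$, we get $b^{m_i}_{-j}=b^*_{-j}$ for all $1\le j\le i$, which is exactly the claim.

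An equivalent route is to use sequential compactness of $(\Sigma^-,d_-)$ directly. A convergent subsequence $\underline b^{m_i}\to\underline b^*$ can be further thinned so that $d_-(\underline b^{m_i},\underline b^*)<e^{-i}$, and by the definition of $d_-$ this means agreement on coordinates $-i,\dots,-1$. I do not expect a real obstacle here; the only care needed is keeping the index sets nested so that earlier agreements persist. Note that the lemma does not claim $\underline b^*\in\Sigma^-_{\mathfrak X}$; showing that will be the substantive step of the surrounding proof, using $\dist_H(Y^m_j,X_j)\to0$ together with continuity and contraction of the $f_j$.
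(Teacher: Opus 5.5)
Your proof is correct and follows essentially the same route as the paper: repeated pigeonhole extraction of nested infinite index sets that fix $b^*_{-1},b^*_{-2},\dots$ one coordinate at a time, followed by a Cantor diagonal selection. Your explicit choice of $m_{i+1}\in M_{i+1}$ with $m_{i+1}>m_i$ plays the role of the paper's diagonal $m_i:=m_i^{(i)}$, and your alternative via sequential compactness of $(\Sigma^-,d_-)$ is also valid.
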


\begin{proof}
    Since each symbol $b_{-1}^m$ belongs to the set $\{1,2,\dots,n\}$, there exists a symbol $b_{-1}^*$ and a strictly increasing
subsequence $(m_i^{(1)})_{i\ge 1}$ such that
$b_{-1}^{m_i^{(1)}}=b_{-1}^*$ 
for every  $i\ge 1$. Applying the same argument,  there exists
$b_{-2}^*\in \{1,2,\dots,n\}$ and a strictly increasing subsequence
$(m_i^{(2)})_{i\ge 1}$ of $(m_i^{(1)})_{i\ge 1}$ such that
$b_{-2}^{m_i^{(2)}}=b_{-2}^*$ for every  $i\ge 1$. Proceeding recursively, we obtain a decreasing sequence of strictly increasing
subsequences
\[
(m_i^{(1)}) \supset (m_i^{(2)}) \supset (m_i^{(3)}) \supset \cdots
\]
and symbols $b_{-1}^*, b_{-2}^*, b_{-3}^*, \dots \in \{1,2,\dots,n\}$ in such a way that
\[
b_{-j}^{m_i^{(r)}}=b_{-j}^*
\quad \text{for every } 1\le j\le r
\]
for every $i\ge 1$ and $r\ge 1$.
Now the proof explores Cantor's diagonalization process. Consider the diagonal sequence 
$(m_i)_{i\ge 1}$ given by $m_i:=m_i^{(i)}$
and note that 
\[
b_{-j}^{m_i}=b_{-j}^*
\quad \text{for every } 1\le j\le i
\]
and every $i\ge 1$.  This proves the lemma.
\end{proof}

The previous lemma plays a crucial role for the next key step in the proof of Theorem~\ref{thm:usc}.

\begin{lemma}\label{le:diagn}
Let $\underline b^*\in \Sigma^-$ denote the sequence determined by Lemma~\ref{le:primeiraaprox}. Then $\underline b^*\in \Sigma_{\mathfrak X}^-$.
\end{lemma}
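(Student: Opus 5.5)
To show $\underline b^*\in\Sigma^-_{\mathfrak X}$, by the definition of the local code space it suffices to prove that $V_{[\underline b^*]_k}=f_{b^*_{-1}}\circ\cdots\circ f_{b^*_{-k}}(X_{b^*_{-k}})\neq\emptyset$ for every $k\ge1$. These sets are nested compact sets, so their intersection $\pi_{\mathfrak X}(\underline b^*)$ is then nonempty. We fix $k$ and work with indices $i\ge k$. By Lemma~\ref{le:primeiraaprox}, $[\underline b^{m_i}]_k=[\underline b^*]_k$ for such $i$. Since $\underline b^{m_i}\in\Sigma^-_{\mathfrak Y_{m_i}}$, the set $f_{b^*_{-1}}\circ\cdots\circ f_{b^*_{-k}}(Y^{m_i}_{b^*_{-k}})$, with each map restricted to $Y^{m_i}_{b^*_{-j}}$, is nonempty. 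Hence there exist points $z^{i}_{j}$, $1\le j\le k$, with
\[
z^i_k\in Y^{m_i}_{b^*_{-k}},\qquad z^i_{j-1}=f_{b^*_{-j}}(z^i_j)\in Y^{m_i}_{b^*_{-(j-1)}}\quad (2\le j\le k).
\]
This is a genuine finite orbit of the global contractions $f_j$ that respects the perturbed domains.

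We then pass to the limit. The space $X^k$ is compact, so after extracting a further subsequence in $i$ we may assume $z^i_j\to z_j$ for each $1\le j\le k$. Each $f_j$ is defined and continuous on all of $X$, so the relations $z_{j-1}=f_{b^*_{-j}}(z_j)$ pass to the limit. Since $\dist_H(Y^{m_i}_l,X_l)<1/m_i\to0$ and $X_l$ is closed, every limit of points of $Y^{m_i}_l$ lies in $X_l$. Therefore $z_j\in X_{b^*_{-j}}$ for all $1\le j\le k$. Thus $z_k\in X_{b^*_{-k}}$ has an admissible orbit through the domains $X_{b^*_{-k}},\dots,X_{b^*_{-1}}$, and $f_{b^*_{-1}}(z_1)\in V_{[\underline b^*]_k}$, which is therefore nonempty.

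The main obstacle is the admissibility constraint. Nonemptiness of the image under the restricted compositions requires each intermediate point to lie in the corresponding domain, and for a general local IFS the domains of the limiting system could be missed. Here this is handled by two facts: the maps are global contractions (only continuity is used), and the domain condition is closed under Hausdorff convergence. The subsequence extraction is done separately for each fixed $k$, so no diagonal argument over $k$ is required beyond the one already provided by Lemma~\ref{le:primeiraaprox}.
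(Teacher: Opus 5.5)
Your proof is correct and follows the paper's argument. For fixed $k$ you use the common prefix $[\underline b^{m_i}]_k=[\underline b^*]_k$ to obtain finite orbits of the global maps through the perturbed domains $Y^{m_i}_{b^*_{-j}}$. You then extract a convergent subsequence, and use continuity of the $f_j$ together with $\dist_H(Y^{m_i}_l,X_l)\to 0$ and closedness of $X_l$ to place the limit points in $X_{b^*_{-j}}$. Your explicit remark that the sets $V_{[\underline b^*]_k}$ are nested nonempty compacta, so that $\pi_{\mathfrak X}(\underline b^*)\neq\emptyset$, fills in a step the paper leaves implicit.
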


   \begin{proof}
Fix $k\ge 1$. We need to prove that the finite word $(b_{-k}^*,\ldots,b_{-2}^*,b_{-1}^*)$ 
is admissible for iterations of $R_{\mathfrak X}$.
By Lemma~\ref{le:primeiraaprox}, for every $i\ge k$ one has
\[
b_{-j}^{m_i}=b_{-j}^*
\quad \text{for every } 1\le j\le k.
\]
Let us use the information about the code spaces associated to the approximate domains ${\mathfrak Y}_{m_i}$. Fix $i\ge k$. Since $\underline b^{m_i}\in \Sigma_{\mathfrak Y_{m_i}}^-$ one obtains that
\[
Y_{b_{-k}^*}^{m_i}
\cap
f_{b_{-k}^*}^{-1}\left(Y_{b_{-k+1}^*}^{m_i}\right)
\cap \cdots \cap
\left(f_{b_{-1}^*}\circ\cdots\circ f_{b_{-k}^*}\right)^{-1}(X)
\neq \emptyset.
\]
In other words, there exists $y_k^i\in Y_{b_{-k}^*}^{m_i}$ such that
\[
f_{b_{-1}^*}\circ\cdots\circ f_{b_{-k}^*}(y_k^i)
\in Y_{b_{-1}^*}^{m_i}.
\]
Since $X$ is compact, up to considering a subsequence we assume without loss of generality 
that $(y_k^i)_{i\ge 1}$ converges to some point $y_k\in X$.
Moreover, as
\begin{align*}
    \text{dist}_H(y_k,X_{b_{-k}^*})
    & \le 
\text{dist}_H(y_k,y_k^i) + 
\text{dist}_H(y_k^i,X_{b_{-k}^*}) \\
& \le \text{dist}_H(y_k,y_k^i) + 
\text{dist}_H(Y_{b_{-k}^*}^{m_i},X_{b_{-k}^*}),
\end{align*}
and the right-hand side tends to zero as $i\to +\infty$, one concludes that $y_k \in X_{b_{-k}^*}$. By continuity of the maps $f_j$, 
\[
f_{b_{-\ell}^*}\circ\cdots\circ f_{b_{-k}^*}(y_k)
=
\lim_{i\to\infty}
f_{b_{-\ell}^*}\circ\cdots\circ f_{b_{-k}^*}(y_k^i)
\]
and the same argument as above ensures that 
$f_{b_{-\ell}^*}\circ\cdots\circ f_{b_{-k}^*}(y_k)
\in X_{b_{-\ell}^*}.$ 
This proves the lemma.
\end{proof}

We are now in a position to complete the proof of Theorem~\ref{thm:usc}. By compactness, up to consider a subsequence we may assume that the sequence $(x_{m_i})_{i\ge 1}$ given by ~\eqref{ex:xnaprox} converges to a point $x\in X$. We claim that $x=\pi_{\mathfrak X}(\underline b^*)$. First note that as
$$
x_{m_i} =  \lim_{k\to \infty } f_{b_{-1}^{m_i}} \circ f_{b_{-2}^{m_i}} \circ \cdots \circ f_{b_{-k}^{m_i}} (y_{k}^{m_i})
$$
and 
\[
b_{-j}^{m_i}=b_{-j}^*
\quad \text{for every } 1\le j\le k \;\text{and} \; i\ge k
\]
and each map $f_j$ is a contraction, 
then denoting by $\lambda\in (0,1)$ a common contraction rate one concludes that 
\begin{align*}
d(x_{m_i}, f_{b_{-1}^*}\circ\cdots\circ f_{b_{-k}^*} (y_k^{m_i}))  & = d\left(
x_{m_i},
f_{b_{-1}^{m_i}}\circ\cdots\circ f_{b_{-k}^{m_i}}
(y_k^{m_i})
\right) \\
& \le \diam(f_{b_{-1}^{m_i}}\circ\cdots\circ f_{b_{-k}^{m_i}}
(Y_{b_{-k}^{m_i}})) \le \lambda^k\diam(X)
\end{align*}
for any fixed $1\le j \le k \le i$. 
Moreover, using that $y_k^{m_i}\in Y_{b_{-k}^{m_i}}^{m_i}=Y_{b_{-k}^*}^{m_i}$ and
$\text{dist}_H(Y_{b_{-k}^*}^{m_i},X_{b_{-k}^*})$ tends to zero as $i\to+\infty$
one can choose \(z_k^i\in X_{b_{-k}^*}\) such that
$d(y_k^{m_i},z_k^i)\le \text{dist}_H(Y_{b_{-k}^*}^{m_i},X_{b_{-k}^*})$. 
By the contraction of the maps $f_j$ we deduce that 
\[
d\left(
f_{b_{-1}^*}\circ\cdots\circ f_{b_{-k}^*}(y_k^{m_i}),
f_{b_{-1}^*}\circ\cdots\circ f_{b_{-k}^*}(z_k^i)
\right)
\le
\lambda^k d(y_k^{m_i},z_k^i) \le \lambda^k\diam(X).
\]
On the other hand, as $\underline b^*\in \Sigma_{\mathfrak X}^-$ (recall Lemma~\ref{le:diagn}) we have
\[
\pi_{\mathfrak X}(\underline b^*)
=
\lim_{\ell\to\infty}
f_{b_{-1}^*}\circ\cdots\circ f_{b_{-\ell}^*}(w_\ell)
\]
for any admissible point $w_\ell\in X_{b_{-\ell}^*}$ , and so
\[
d\left(
\pi_{\mathfrak X}(\underline b^*),
f_{b_{-1}^*}\circ\cdots\circ f_{b_{-k}^*}(z_k^i)
\right)
\le \lambda^k \diam(X_{b_{-k}^*})
\le \lambda^k\diam(X).
\]
Altogether we deduce that 
\begin{align*}
d(x_{m_i},\pi_{\mathfrak X}(\underline b^*))
&\le 3 \lambda^k\diam(X) \quad\text{for each $i\ge k$}.
\end{align*}
Taking the limsup as $i$ tends to infinity followed by the limit as $k$ tends to infinity one concludes that $x=\pi_{\mathfrak X}(\underline b^*),$ thus proving the claim. 

Note that the latter ensures that $x\in A_{\mathfrak X}$. However, this leads to a contradiction as the assumption that 
$d(x_m,A_{\mathfrak X})\ge \varepsilon$
for each $m\ge 1$ implies $d(x,A_{\mathfrak X})\ge \varepsilon>0$.
Altogether this proves that the local attractor map $(K^*(X))^n\ni \mathfrak X\mapsto A_{\mathfrak X}$ 
is upper-semicontinuous in the Vietoris topology and finishes the proof of the theorem.

\section{Shadowable pseudo-orbits and shadows}\label{sec:shadows}

In this section we prove Theorem~\ref{thm:Y}, on the concordant shadowing property and shadowable pseudo-orbits for significant sequences in $\Sigma_\infty$.

\subsection{Shadowable sequences and points}

	Let $R_{\mathfrak X}=(X_j,f_j)_{1\le j \le n}$ be a contractive local IFS.	
	By construction, $I(\underline{a})$ is a subset of the compact set $X_{a_0}$ formed by points in $X_{a_0}$ which admit an infinite orbit determined by $\underline a$ (recall~\eqref{eq:def.Ia}).
	The following lemma provides an optimal approximation of a pseudo-orbit by an actual orbit of the local IFS.
	
	\begin{lemma}\label{lem:L1}
		Let $\underline{a} \in \Sigma_{\infty}$ and $x \in I(\underline{a})$. Then, for any $(\underline{a}, \delta)$-pseudo-orbit $(x_i)_{i \geq 0}$, we have:
		\[ d(f_{\underline{a}}^m(x), x_m) \leq \frac{1}{1-\lambda} \delta + \lambda^m d(x, x_0), \quad \forall m \geq 0. \]
	\end{lemma}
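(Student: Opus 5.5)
The plan is a direct induction on $m$, comparing the true orbit $f_{\underline a}^m(x)$ with the pseudo-orbit through the one-step contraction of the maps. Let $\lambda\in[0,1)$ be a common contraction rate, so $\lambda\ge\lambda_j$ for all $j$. Write $e_m := d(f_{\underline a}^m(x), x_m)$, with the convention $f_{\underline a}^0(x)=x$. Since $x\in I(\underline a)$, the whole orbit $(f_{\underline a}^m(x))_{m\ge0}$ is well defined and $f_{\underline a}^m(x)\in X_{a_m}$ for every $m$. By the definition of an $(\underline a,\delta)$-pseudo-orbit we also have $x_m\in X_{a_m}$. Hence both points lie in the domain of $f_{a_m}$, and the contraction estimate can be applied to them.

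The key step is the recursive inequality
\[
e_{m+1} \le d\bigl(f_{a_m}(f_{\underline a}^m(x)), f_{a_m}(x_m)\bigr) + d\bigl(f_{a_m}(x_m), x_{m+1}\bigr) \le \lambda e_m + \delta ,
\]
which follows from the triangle inequality, the contraction of $f_{a_m}$ on $X_{a_m}$, and the pseudo-orbit condition. Iterating from $e_0=d(x,x_0)$ gives
\[
e_m \le \lambda^m d(x,x_0) + \delta\sum_{j=0}^{m-1}\lambda^j .
\]
The geometric sum is bounded by $\sum_{j\ge0}\lambda^j = \frac{1}{1-\lambda}$, and this yields the claimed estimate for all $m\ge 0$. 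The case $m=0$ is trivial.

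There is no real obstacle here. The only point that needs care is admissibility: we must check that at each step both points are in the same domain $X_{a_m}$, so that the contraction estimate applies. This is exactly where the hypotheses $x\in I(\underline a)$ and $x_m\in X_{a_m}$ (built into Definition~\ref{def:pseudo-orbit}) are used. We should also confirm that the indexing convention $f_{\underline a}^{m+1}=f_{a_m}\circ f_{\underline a}^m$ matches the pseudo-orbit condition $d(f_{a_m}(x_m),x_{m+1})<\delta$, which it does.
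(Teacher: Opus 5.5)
Your proposal is correct and follows essentially the same route as the paper: an induction on $m$ based on the one-step estimate $e_{m+1}\le \lambda e_m+\delta$ (triangle inequality, contraction on $X_{a_m}$, pseudo-orbit condition), followed by bounding the geometric sum by $\frac{1}{1-\lambda}$. Your indexing $\delta\sum_{j=0}^{m-1}\lambda^j$ is also slightly cleaner than the paper's, whose stated inductive hypothesis and computation disagree by one power of $\lambda$.
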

	
	\begin{proof}
		The proof proceeds by induction on $m$, proving that 
        $$
        d(f_{\underline{a}}^m(x), x_m) \leq (1+\lambda +\dots +\lambda^m) \delta + \lambda^m d(x, x_0)
        $$ for every $m\ge 1$.
        Since $x \in I(\underline{a})$, it can be consecutively iterated by $f_{a_0}, f_{a_1}, \dots$. For $m=1$, by triangular inequality,
		\[ d(f_{\underline{a}}^1(x), x_1) \leq d(f_{a_0}(x), f_{a_0}(x_0)) + d(f_{a_0}(x_0), x_1) \leq \lambda d(x, x_0) + \delta. \]
        Assuming that  
$d(f_{\underline{a}}^m(x), x_m) \leq (1+\lambda +\dots +\lambda^m) \delta + \lambda^m d(x, x_0)$ for some $m\ge 1$, one obtains
\begin{align*}
d(f_{\underline{a}}^{m+1}(x), x_{m+1})
&= d\bigl(f_{a_m}(f_{\underline{a}}^m(x)), x_{m+1}\bigr) \\
&\le d\bigl(f_{a_m}(f_{\underline{a}}^m(x)), f_{a_m}(x_m)\bigr)
   + d\bigl(f_{a_m}(x_m), x_{m+1}\bigr) \\
&\le \lambda\, d(f_{\underline{a}}^m(x), x_m) + \delta \\
&\le \lambda\Big((1+\lambda+\cdots+\lambda^{m-1})\delta+\lambda^m d(x,x_0)\Big)+\delta \\
&= (1+\lambda+\cdots+\lambda^m)\delta+\lambda^{m+1}d(x,x_0),
\end{align*}
This proves the inductive step, and the lemma follows immediately.
	\end{proof}
	
	We now provide a simple  obstruction to the shadowing property for local IFSs. 
	
	\begin{lemma}\label{lem:L2}
		Let $\underline{a} \in \Sigma_{\infty}$, let  
        $(y_i)_{i \geq 0}$ be a $(\underline{a}, \delta)$-pseudo-orbit  and assume that $$\varepsilon_0 = \operatorname{dist}_H(\{y_0\}, I(\underline{a})) > 0.$$ If $0<\varepsilon < \varepsilon_0$, then $(y_i)_{i \geq 0}$ is not $(\underline{a}, \varepsilon)$-shadowed by any orbit of $R_{\mathfrak{X}}$.
	\end{lemma}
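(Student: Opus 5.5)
The plan is to argue by contradiction, using only the definition of $I(\underline a)$ and of $(\underline a,\varepsilon)$-shadowing. The key observation is that any point that $(\underline a,\varepsilon)$-shadows a pseudo-orbit along the itinerary $\underline a$ must itself lie in $I(\underline a)$. Once that is known, the distance condition at time $0$ forces $y_0$ to lie within $\varepsilon$ of $I(\underline a)$.

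So suppose $(y_i)_{i\ge 0}$ is $(\underline a,\varepsilon)$-shadowed by some $x$ of $R_{\mathfrak X}$. By definition, the whole sequence $f^k_{\underline a}(x)$, $k\ge 0$, is defined. Hence $x\in X_{a_0}$, and $f^k_{\underline a}(x)\in X_{a_k}$ for every $k\ge 0$. It follows that $((f^k_{\underline a}(x),a_k))_{k\ge 0}\in\Omega$ (see \eqref{eq:def-enlarged-Omega0}), and its symbolic projection is exactly $\underline a$. By \eqref{eq:def.Ia}, this gives $x\in I(\underline a)$.

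The shadowing estimate is required for $k\ge 0$ here; this is consistent with the finite version in Definition~\ref{def:finite-shadowing} and with the use of $\pi_0$ in Theorem~\ref{thm:Y}. At $k=0$ it gives $d(x,y_0)<\varepsilon$. Since $\operatorname{dist}_H(\{y_0\},I(\underline a))=\inf_{z\in I(\underline a)}d(y_0,z)$, we get $\varepsilon_0\le d(y_0,x)<\varepsilon<\varepsilon_0$, which is a contradiction. Here we also use that $I(\underline a)$ is nonempty and compact because $\underline a\in\Sigma_\infty$, so the Hausdorff distance to the singleton is just the point-to-set distance.

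The only delicate point is the indexing convention: Definition~\ref{def:shadowing} writes ``for all $k\ge 1$''. If only $k\ge1$ is intended, I would instead use the $k=1$ estimate together with contraction and compactness to pull back. That route is not direct, because $f_{a_0}$ need not be injective. I would therefore rely on the $k\ge 0$ convention used in Definition~\ref{def:finite-shadowing} and Lemma~\ref{le:finiteshadowequiv}, and note it explicitly in the proof.
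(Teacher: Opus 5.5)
Your proof is correct and is the paper's argument: a shadowing point $x$ necessarily lies in $I(\underline a)$, and the $k=0$ estimate $d(x,y_0)<\varepsilon<\varepsilon_0$ gives the contradiction. You make explicit the membership $x\in I(\underline a)$ and the $k\ge 0$ convention, both of which the paper uses tacitly.

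One correction to your justification: the genuine Hausdorff distance $\operatorname{dist}_H(\{y_0\},I(\underline a))$ equals $\max_{z\in I(\underline a)}d(y_0,z)$, not the infimum, and compactness does not change this. The lemma, like the paper's proof and its use in Theorem~\ref{thm:Y}, must be read with $\varepsilon_0=\inf_{z\in I(\underline a)}d(y_0,z)$. Under the literal reading the statement is false: the genuine $\underline a$-orbit of any $y_0\in I(\underline a)$ is a counterexample whenever $I(\underline a)$ is not a singleton.
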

	
	\begin{proof}
		Suppose, by contradiction, that $(y_i)_{i \geq 0}$ is $(\underline{a}, \varepsilon)$-shadowed by some orbit of $R_{\mathfrak{X}}$. Then there exists a point $x \in I(\underline{a})$ such that $d(x, y_0) < \varepsilon < \varepsilon_0$, which contradicts the definition of $\varepsilon_0$.
	\end{proof}
	
	Despite its simplicity, Lemma~\ref{lem:L2} ensures that for the shadowing property to hold, the initial points of pseudo-orbits must approach $I(\underline{a})$ as $\delta$ tends to zero.
This plays a crucial role in the proof of the following result.
    
	\begin{proposition}\label{prop:X}
		Consider a contractive local IFS $R_{\mathfrak{X}} = (X_j, f_j)$ such that $A_{\mathfrak{X}} \neq \emptyset$. Then, for each $\underline{a} \in \Sigma^+$:
		\begin{enumerate}
			\item If $\underline{a} \notin \Sigma_{\infty}$ then no $(\underline{a}, \delta)$-pseudo-orbit is $(\underline{a}, \varepsilon)$-shadowed by any orbit of $R_{\mathfrak{X}}$;
			\item If $\underline{a} \in \Sigma_{\infty}$, $(y_i)_{i \geq 0}$ is an $(\underline{a}, \delta)$-pseudo-orbit such that \[\varepsilon_0 = \operatorname{dist}_H(\{y_0\}, I(\underline{a})) > 0,\] and $0<\vep<\vep_0$ then it is not $(\underline{a}, \varepsilon)$-shadowed by any orbit of $R_{\mathfrak{X}}$;
			\item If $\underline{a} \in \Sigma_{\infty}$, then for any $\varepsilon > 0$, there exist $N \geq 0$ and $\delta > 0$ such that any $(\sigma^N(\underline{a}), \delta)$-pseudo-orbit (obtained by discarding the first $N$ elements) is $(\sigma^N(\underline{a}), \varepsilon)$-shadowed by an orbit of $R_{\mathfrak{X}}$;
			\item If $\underline{a} \in \Sigma_{\infty}$ and $(y_i)_{i \geq 0}$ is an $(\underline{a}, \delta)$-pseudo-orbit such that $y_0 \in I(\underline{a})$, then it is $(\underline{a}, \varepsilon)$-shadowed by the orbit of $y_0$ provided that $\delta < \varepsilon(1 - \lambda)$.
		\end{enumerate}
	\end{proposition}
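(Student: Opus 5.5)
I will prove the four items one at a time. Items (1), (2) and (4) follow almost directly from the definitions and from Lemmas~\ref{lem:L1} and~\ref{lem:L2}. Item (3) is the only one that needs a genuine argument.

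For item (1), suppose some $(\underline a,\delta)$-pseudo-orbit is $(\underline a,\vep)$-shadowed by a point $x$. Then every iterate $f^k_{\underline a}(x)$ must be defined for all $k\ge 0$. Hence $((f^k_{\underline a}(x),a_k))_{k\ge0}\in\Omega$, which gives $\underline a\in\pi_{\Sigma^+}(\Omega)=\Sigma_\infty$. This contradicts the hypothesis $\underline a\notin\Sigma_\infty$. Item (2) is exactly Lemma~\ref{lem:L2}. For item (4), take $x=y_0\in I(\underline a)$ in Lemma~\ref{lem:L1}. Since $d(x,x_0)=0$, the lemma gives $d(f^m_{\underline a}(y_0),y_m)\le \delta/(1-\lambda)$ for every $m$. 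If the pseudo-orbit inequalities are strict, the inductive bound is strict as well, and so it is $<\vep$ whenever $\delta<\vep(1-\lambda)$. In the worst case I would shrink $\delta$ slightly to keep strictness.

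For item (3), fix $\underline a\in\Sigma_\infty$ and $\vep>0$. Choose $N$ with $\lambda^N\diam(X)<\vep/2$ and choose $\delta$ with $\delta/(1-\lambda)<\vep/2$. Pick any $z\in I(\underline a)$ and set $x:=f^N_{\underline a}(z)$. Then $x\in I(\sigma^N(\underline a))$ and the whole orbit of $x$ along $\sigma^N(\underline a)$ is defined. Now let $(y_i)$ be any $(\sigma^N(\underline a),\delta)$-pseudo-orbit and apply Lemma~\ref{lem:L1} to the sequence $\sigma^N(\underline a)$. This gives
$d(f^m_{\sigma^N\underline a}(x),y_m)\le \delta/(1-\lambda)+\lambda^m d(x,y_0)$.

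This bound does not yet suffice, because the term $\lambda^m d(x,y_0)$ is small only for large $m$, and at $m=0$ it can be as large as $\diam(X)$. This is the main obstacle. I would therefore read the statement in the intended sense suggested by the discarding of $N$ elements: the pseudo-orbit is taken along $\underline a$, and the first $N$ elements are discarded. That is, given an $(\underline a,\delta)$-pseudo-orbit $(y_i)_{i\ge0}$, the tail $(y_{N+i})_{i\ge0}$ is shadowed by $f^N_{\underline a}(z)$ for any $z\in I(\underline a)$. Applying Lemma~\ref{lem:L1} along $\underline a$ at times $m=N+i$ gives
$d(f^{N+i}_{\underline a}(z),y_{N+i})\le \delta/(1-\lambda)+\lambda^{N+i}\diam(X)<\vep$,
which is exactly $(\sigma^N(\underline a),\vep)$-shadowing of the discarded sequence by the point $f^N_{\underline a}(z)$.

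I expect the proof to spend most of its effort on making this interpretation precise, since without it a uniform $N$ cannot control the initial term.
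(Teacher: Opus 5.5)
Your proposal is correct and follows the paper's proof. Items (1), (2) and (4) come directly from the definition of $\Sigma_\infty$ and Lemmas~\ref{lem:L2} and~\ref{lem:L1}; in (4) no shrinking of $\delta$ is needed, since $d(f^m_{\underline a}(y_0),y_m)\le \delta/(1-\lambda)<\vep$ already holds. For item (3) the paper adopts exactly your reading: it discards the first $N$ terms of an $(\underline a,\delta)$-pseudo-orbit and shadows the tail by $f^N_{\underline a}(x)$ for any $x\in I(\underline a)$, with the same choices $\delta/(1-\lambda)<\vep/2$ and $\lambda^N\diam(X)<\vep/2$.
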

	
	\begin{proof}
Item (1) is immediate because, as $\underline a \notin\Sigma_\infty$, no  infinite orbit exists associated to this sequence of symbols. Item (2) is a direct consequence of Lemma~\ref{lem:L2}. 

\smallskip
Let us prove item (3). By Lemma~\ref{lem:L1}, given $\underline a\in \Sigma_\infty$, $x\in I(\underline a)$ and an $(\underline a,\delta)$-pseudo orbit $(x_m)_{m\ge 0}$ one has that  $d(f_{\underline{a}}^m(x), x_m) \leq \frac{1}{1-\lambda} \delta + \lambda^m d(x, x_0)$ for every 
$m\ge 1$
So, given $\varepsilon > 0$, choose $\delta > 0$ such that $\frac{\delta}{1-\lambda} < \varepsilon/2$ and let $N\ge 1$ be such that $\lambda^N \operatorname{diam}(X) < \varepsilon/2$. Then, discarding the first $N$ elements results in a $(\sigma^N(\underline{a}), \delta)$-pseudo-orbit that is $(\sigma^N(\underline{a}), \varepsilon)$-shadowed by the orbit of $f_{\underline{a}}^N(x)$ for any $x \in I(\underline{a})$, as desired. 
\smallskip

Finally, item (4) follows from Lemma~\ref{lem:L1} by setting $x = y_0$, which yields 
$$
d(f_{\underline{a}}^m(y_0), y_m) \leq \frac{\delta}{1-\lambda} < \varepsilon.
$$
This finishes the proof of the proposition.
	\end{proof}
	\bigskip

    \begin{remark}\label{rmk:averageshadowign}
    Item (3) in Proposition~\ref{prop:X} can be reformulated in terms of non-autonomous dynamical systems. In fact, given $\underline a\in \Sigma_\infty$, the non-autonomous dynamical system associated to the sequence of maps $(f_{a_i})_{i\ge 0}$ satisfies an average shadowing property:
for every
$\varepsilon>0$ there exists $\delta>0$ such that for any sequence
$(x_i)_{i\ge 0}$  satisfying
$x_{a_i}\in X_{a_i}$ and 
$d\bigl(f_{a_i}(x_i),x_{i+1}\bigr)<\delta$
there exists $x\in X_{a_0}$ such that
$$
\limsup_{n\to\infty}\frac{1}{n}\sum_{i=0}^{n-1}
d\bigl(f_{a_{i-1}}\circ\cdots\circ f_{a_0}(x),x_i\bigr)<\vep.
$$   
\end{remark}

    \medskip

	\subsection{Proof of Theorem~\ref{thm:Y}}

  The proof of the theorem will follow as a direct consequence of Lemma~\ref{lem:L1} and Proposition~\ref{prop:X}. 

\smallskip
$(1) \Rightarrow (2)$
\smallskip        
        
        Assume that there exists a sequence $\underline{a} \in \Sigma_{\infty}$ such that $\lim_{\delta \to 0} \operatorname{dist}_H(\pi_0(\Gamma_{\underline a, \delta}), I(\underline{a})) \geq \varepsilon_0$. Then for any $0<\varepsilon < \vep_0$ and $\delta>0$ one can find a $(\underline a,\delta)$-pseudo-orbit $(y_i)$ with $d(y_0, I(\underline{a})) \geq \vep$. By Proposition~\ref{prop:X} item (2), this orbit is not $\vep$-shadowed for $0<\varepsilon < \varepsilon_0$, hence the concordant shadowing property fails.

\medskip
$(2) \Rightarrow (1)$
\smallskip        
We wish to prove that  the concordant shadowing property holds.
If $\underline a \notin \Sigma_\infty$ then there exist no infinite $(\underline a, \delta)$-pseudo orbits, and there is nothing to prove. Hence we may assume that $\underline{a} \in \Sigma_{\infty}$.
By assumption, one has that 
		$ 
        \lim_{\delta \to 0} \operatorname{dist}_H(\pi_0(\Gamma_{\underline a, \delta}), I(\underline{a})) = 0.
        $  
        Hence, given $\vep>0$ fixed, one can choose $\delta_0 > 0$ such that $\operatorname{dist}_H(\pi_0(\Gamma_{\underline a, \delta}), I(\underline{a})) < \frac{\varepsilon}{2}$ and $\frac{\delta}{1-\lambda} < \frac{\varepsilon}{2}$ for every $0<\delta < \delta_0$. Thus, given a $(\underline a, \delta)$-pseudo-orbit $(y_i)_{i\ge 0}$ there exists $x \in I(\underline{a})$ with $d(y_0, x) < \varepsilon/2$. By Lemma~\ref{lem:L1}, $d(f_{\underline{a}}^m(x), y_m) < \varepsilon$, hence
$x$ is an $(\underline a,\vep)$-shadow for the pseudo orbit. This finishes the proof of the theorem.
\hfill $\square$

\section{Local IFSs derived from beta-transformations}
\label{sec:local-beta}

In this section we shall prove Theorem~\ref{thm:beta}.
Let $\beta>1$ and set $m=\lceil\beta\rceil$.
Consider the $\beta$-transformation
$ 
T_\beta:[0,1]\to[0,1],\quad \text{given by } T_\beta(x)=\beta x-\lfloor \beta x\rfloor.
$ 
The $\beta$-shift 
$\Sigma_\beta\subset \{0,1,2, \dots, m-1\}^{\mathbb N}$
is defined as
the closure of the set of greedy $\beta$-expansions:
\[
\Sigma_\beta=\overline{\{d_\beta(x):\ x\in[0,1)\}},
\qquad
d_\beta(x)=(d_1(x),d_2(x),\dots),
\quad
d_k(x)=\lfloor \beta T_\beta^{k-1}(x)\rfloor.
\]

We now use that local IFSs are flexible enough so that every piecewise expanding interval map can be dually represented through a contractive local IFS. Let us be more precise in the special case of the $\beta$-transformation.
Consider the $\beta^{-1}$-contractive inverse branches
\[
f^\beta_j(x)=\frac{x+j}{\beta},\qquad 0\le j \le m-1
\]
of $T_\beta$,
set $X^\beta_j=[0,1]$ for every $0\le j \le m-2$ and $X^\beta_{m-1}=\bigl[0,\ \beta-(m-1)\bigr]$.

Consider the contractive local IFS $R_\beta=(X^\beta_j,f^\beta_j)_{0\le j \le m-1}$ and denote its 
code space by $\Sigma^-_\beta$.
Define $\iota:\Sigma^-\to \{1,2, \dots, m\}^{\mathbb N}$ by
$\iota(\underline b)=(b_{-1},b_{-2},b_{-3},\dots).$

\begin{lemma}
The following properties hold:
\begin{enumerate}
    \item For every $\underline b\in\Sigma^-_\beta$, if 
    $x=\pi_\beta(\underline b)$
    then $\iota(\underline b)=d_\beta(x)$;    
    \item $\iota(\Sigma^-_\beta)=\Sigma_\beta.$
\end{enumerate}
\end{lemma}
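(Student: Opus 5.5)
Both items rest on one observation: for $\underline b\in\Sigma^-$ and $k\ge1$, the set $V_{[\underline b]_k}=f^\beta_{b_{-1}}\circ\cdots\circ f^\beta_{b_{-k}}(X^\beta_{b_{-k}})$ is exactly the set of points $x\in[0,1]$ whose orbit under $T_\beta$ follows the digits $b_{-1},b_{-2},\dots,b_{-k}$. More precisely, $x\in V_{[\underline b]_k}$ iff $x_0=x$ and $x_{j}=\beta x_{j-1}-b_{-j}$ satisfy $x_j\in X^\beta_{b_{-j-1}}$ for $0\le j\le k-1$ and $x_k\in X^\beta_{b_{-k}}$. Indeed, $f^\beta_j$ is a bijection from $[0,1]$ onto $[j/\beta,(j+1)/\beta]$ with inverse $y\mapsto\beta y-j$. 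Since we compose on the left with $b_{-1}$, the map $x\mapsto\beta x-b_{-1}$ undoes the first symbol, and the domain restrictions translate into constraints on the intermediate points.

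For the digit sets, note that $f^\beta_j(X^\beta_j)=[j/\beta,(j+1)/\beta]$ when $j\le m-2$, and $f^\beta_{m-1}(X^\beta_{m-1})=[(m-1)/\beta,1]$. These closed intervals cover $[0,1]$ and overlap only at endpoints. Modulo boundary points, $x$ therefore lies in the image of branch $j$ exactly when $\lfloor\beta x\rfloor=j$. Inductively, $\pi_\beta(\underline b)$ is the unique point $x$ (the $V_{[\underline b]_k}$ are nested nonempty intervals of length at most $\beta^{-k}$) with $T$-itinerary $\iota(\underline b)$ in the ``closed'' sense. This means $x=\sum_{k\ge1}b_{-k}\beta^{-k}$ with every tail $\sum_{i\ge1}b_{-j-i}\beta^{-i}\in[0,1]$.

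For item (1), I will use Parry's characterization: a sequence $(d_k)$ is the greedy expansion of $x$ iff $x=\sum d_k\beta^{-k}$ and every tail satisfies $\sum_{i}d_{j+i}\beta^{-i}<1$. The main obstacle is the boundary case, where the tail equals $1$. This happens, for instance, when $x=j/\beta$ is coded through branch $j-1$ landing at the right endpoint $1$, or through $f^\beta_{m-1}$ at $\beta-(m-1)$. To handle it, I would check directly that the code space is consistent with greedy digits. A tail equal to $1$ means the intermediate point $x_j$ is $1$. But $1\in X^\beta_i$ for $i\le m-2$, and $\pi$ of a sequence forces $x_j\in[0,1]$ with the next symbol determined by $\lfloor\beta x_j\rfloor$. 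I would argue that with the restriction $X^\beta_{m-1}=[0,\beta-(m-1)]$, the only ambiguous points are these endpoints. At such points the greedy digit $\lfloor\beta x\rfloor$ is the larger index, and the alternative code fails to be realized in $\Sigma_\beta$. If that check fails, I would interpret item (1) up to the countably many boundary points and use $d_\beta(1)$, the quasi-greedy convention implicit in $\Sigma_\beta$ being a closure.

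For item (2), one inclusion follows from (1), from $\iota$ being continuous, and from $\Sigma^-_\beta$ being compact (Theorem~\ref{thm:classification}). These give $\iota(\Sigma^-_\beta)\subset\overline{\{d_\beta(x)\}}=\Sigma_\beta$, and the edge cases that (1) may leave open land in the closure by approximating from the left. For the reverse inclusion, given $x\in[0,1)$, define $\underline b$ by $b_{-k}=d_k(x)$. Then $T^{k}_\beta x\in[0,1)$, and the digit $m-1$ only occurs when $T^{k-1}_\beta x\ge(m-1)/\beta$, which gives $T^k_\beta x\in[0,\beta-m+1)$. Hence every $V_{[\underline b]_k}$ contains $x$, so $\underline b\in\Sigma^-_\beta$. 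Since $\iota(\Sigma^-_\beta)$ is compact, it contains the closure $\Sigma_\beta$.
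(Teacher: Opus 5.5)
Your route is the paper's: read the digits off the inverse branches via $\lfloor y+j\rfloor=j$, then pass to the closure. You are right that the endpoint case is the crux, but your way of disposing of it fails, because item (1) is false as stated for every $\beta>1$.

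The local Hutchinson--Barnsley operator maps $[0,1]$ onto itself, so $A_\beta=[0,1]$ and $1=\pi_\beta(\underline b')$ for some $\underline b'\in\Sigma^-_\beta$. Since $1\in X^\beta_0=[0,1]$, the sequence $\underline b=\underline b'\ast 0$ lies in $\Sigma^-_\beta$, with $\pi_\beta(\underline b)=f^\beta_0(1)=1/\beta$ and $\iota(\underline b)$ starting with $0$. But $d_\beta(1/\beta)=(1,0,0,\dots)$.

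Your proposed check (``the alternative code fails to be realized in $\Sigma_\beta$'') is also wrong. If $d_\beta(1)$ is infinite, you may take $\iota(\underline b')=d_\beta(1)$. Then every shift of $0\,d_\beta(1)$ is $\preceq d_\beta(1)$, so it lies in $\Sigma_\beta$ by Parry's criterion, as it must if (2) holds.

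What is true is (1) exactly for those $\underline b$ whose intermediate points $x_j=\pi_\beta((\sigma^-)^j\underline b)$, $j\ge 1$, all lie in $[0,1)$. So your fallback is the right reading. The paper's proof has the same blind spot: it applies $\lfloor y+j\rfloor=j$, valid only for $y<1$, to points that may equal $1$.

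Your admissibility condition is also mis-indexed. It should read $x_j\in X^\beta_{b_{-j}}$ for $1\le j\le k$. As written, the requirement $x_0\in X^\beta_{b_{-1}}$ would give $V_{[\underline b]_1}=\emptyset$ for $b_{-1}=2$ and $\beta=5/2$.

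For $\iota(\Sigma^-_\beta)\subset\Sigma_\beta$, the phrase ``the edge cases land in the closure by approximating from the left'' carries all the content, and it is false in general. Take the golden mean $\phi$: then $m=2$, $X^\phi_1=[0,1/\phi]$, $f^\phi_1(0)=1/\phi$ and $f^\phi_1(1/\phi)=1$. So $(\dots,0,0,1,1)\in\Sigma^-_\phi$, and its image $110^\infty=d_\phi(1)$ is not in the golden mean shift $\Sigma_\phi$.

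Here (1) even holds. The problem is that $\Sigma_\beta$ is a closure over $x\in[0,1)$ only, and approximating $1$ from the left yields the quasi-greedy expansion $(10)^\infty$, not $d_\phi(1)$. The same happens for every non-integer simple Parry number, so (2) needs a hypothesis. The paper likewise writes $d_\beta(x)\in\Sigma_\beta$ without excluding $x=1$.

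When $d_\beta(1)$ is infinite, which covers the non-Parry parameters used for Theorem~\ref{thm:beta}, you can close the gap with the description you already have. For $\underline c=\iota(\underline b)$, each tail value $x_n=\sum_{i\ge1}c_{n+i}\beta^{-i}$ lies in $[0,1]$. If $\sigma^n\underline c\succ d_\beta(1)$ with first discrepancy at place $k$, then $x_n\ge 1+\beta^{-k}(1-T^k_\beta(1))>1$, a contradiction. Hence $\sigma^n\underline c\preceq d_\beta(1)$ for all $n$, which is Parry's description of $\Sigma_\beta$ in that case.

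Your reverse inclusion is correct. It is more careful than the paper's, since you actually verify the constraint coming from $X^\beta_{m-1}$.
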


\begin{proof}
For every $0\le j \le m-1$ and every $y\in[0,1)$ one has
$\beta f_j(y)=y+j$ and $\lfloor y+j\rfloor=j$. Thus, for each $y\in[0,1)$,
\begin{equation}
\label{eq:indices}
T_\beta(f_j(y))=y+j-j=y
\quad\text{and}\quad
\lfloor \beta f_j(y)\rfloor=j.
\end{equation}
Similarly, $f_j(T_\beta(x))=x$ for every $x\in X_j$.

By definition, for each $\underline b\in\Sigma^-_\beta$ 
one has 
$ 
\diam\bigl(V_{[\underline b]_k}\bigr)
\le \beta^{-k}\diam(X_{b_{-k}})
$
for each $k\ge 0$, hence
$ 
\pi_\beta(\underline b)=\bigcap_{k\ge1}V_{[\underline b]_k}
$ 
is a singleton. 
Write $x=\pi_{\mathfrak X}(\underline b)$ and $\underline{c}=\iota(\underline b)\in \Sigma^+$.
As $x\in V_{[\underline b]_k}$, there exists $y_k\in X_{b_{-k}}$ such that
$ 
x=f_{b_{-1}}\circ f_{b_{-2}}\circ\cdots\circ f_{b_{-k}}(y_k).
$ 
Applying \eqref{eq:indices} successively,
\begin{equation}
\label{eq:Tk-r}
T_\beta^{\,k-1}(x)=T_\beta^{\,k-1}\circ  f_{b_{-1}}\circ f_{b_{-2}}\circ\cdots\circ f_{b_{-k}}(y_k) \in X_{b_{-k}}
\end{equation}
and so
$ \Big\lfloor \beta\,f_{b_{-k}}(y_k)\Big\rfloor=b_{-k}$ 
This proves that
$ d_k(x)=b_{-k}=c_k$ 
for each $k\ge 1$. Consequently,
$\iota(\underline b) = d_\beta(x)\in \Sigma_\beta$. This proves item (1) and  also that  $\iota(\Sigma^-_\beta)\subset\Sigma_\beta$.

\smallskip
In order to prove the converse inclusion $\Sigma_\beta \subset \iota(\Sigma^-_\beta)$,
take $x\in[0,1)$ and $\underline c=d_\beta(x)\in\{0,1,2, \dots, m-1\}^{\mathbb N}$, and define $\underline b\in\Sigma^-$ by  $\iota(\underline b)=\underline c$.
We aim at proving that $\underline b\in \Sigma^-_\beta$.
First note that, for each $k\ge 1$, 
$$
c_k=d_k(x)=\lfloor \beta T_\beta^{k-1}(x)\rfloor
= \lfloor T_\beta^{k}(x)\rfloor
$$
which, together
with \eqref{eq:indices},
implies that
\[
x=f_{c_1}\bigl(T_\beta(x)\bigr)
=f_{c_1}\circ f_{c_2}\bigl(T_\beta^2(x)\bigr)
=\cdots
=f_{c_1}\circ\cdots\circ f_{c_k}\bigl(T_\beta^k(x)\bigr).
\]
This ensures that $x\in f_{c_1}\circ\cdots\circ f_{c_k}(X_{c_{-k}})$ or, equivalently, 
$x\in V_{[\underline b]_k}$ (recall $\underline b=\iota(c)$).
Since $k\ge 1$ is arbitrary then 
$$
x\in \bigcap_{k\ge 1} V_{[\underline b]_k} \neq\emptyset 
$$
and so $\underline b\in \Sigma^-_{\beta}$ as desired.
Therefore
$\{d_\beta(x):x\in[0,1)\}\subset \iota(\Sigma^-_\beta).$
Taking the closure, we deduce that
$\Sigma_\beta=\overline{\{d_\beta(x):x\in[0,1)\}}
\subset \iota(\Sigma^-_\beta).$
\end{proof}

\begin{remark}
We recall the following facts: 
(i) the periodic points are dense in coded spaces
(cf. \cite[Section~2]{BP92}), and 
(ii) $\Sigma_\beta$ is a coded shift (see e.g. \cite{CT12}).
Since the code space can be computed as $\Sigma^-_\beta=\iota(\Sigma_\beta)$ then 
the periodic orbits are dense in
$\Sigma^-_\beta$.    
\end{remark}

Recall that $\beta>1$ is a \emph{Parry number} if $d_\beta(1)$ is
eventually periodic, and it is called a \emph{simple Parry number} if
$d_\beta(1)$ is finite. This can be formulated in terms of the orbit of $1$ by the transformation $T_\beta$
as being pre-periodic or for the existence of $k\ge 1$ such that  $T_\beta^k(1)=0$, respectively.
Moreover, it is well known that $\beta>1$ is a Parry number if and only if $\Sigma_\beta$ is sofic (i.e. is a subshift where the language is recognizable by a finite automaton),  and that $\beta>1$ is a simple Parry number if and only if $\Sigma_\beta$ is a subshift of finite type (cf. \cite{Bla89,Sch97} and references therein).
The parameters $\beta>1$ for which
$d_\beta(1)$ is
eventually periodic are at most countable and the orbit of 1 is dense in $[0,1]$ for a Baire generic $\beta$ in $(1,+\infty)$ (cf. \cite{Sch97}). The dependence of the parameter $\beta$ is quite sensitive as non-Parry numbers are also dense in $(1,+\infty)$ (cf. \cite{HS24}).

\begin{proposition}
\label{prop:nonstable-nonparry}
Let $\beta_0>1$ be such that $\beta_0$ is a non-Parry number. Then for every $\varepsilon>0$ there exists $\beta\in(\beta_0-\varepsilon,\beta_0+\varepsilon)$
such that
$\Sigma_{\beta}\neq \Sigma_{\beta_0}.$
In particular, $R_{\beta_0}$ is not combinatorially stable.
\end{proposition}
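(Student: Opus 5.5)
The plan is to use the monotone dependence of the $\beta$-shift on the parameter, via Parry's lexicographic characterization, and then check that $R_\beta$ is $D$-close to $R_{\beta_0}$ when $\beta$ is close to $\beta_0$. Combining this with the preceding lemma, which gives $\iota(\Sigma^-_\beta)=\Sigma_\beta$, will produce local IFSs arbitrarily close to $R_{\beta_0}$ whose code spaces differ from $\Sigma^-_{\beta_0}$.

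\emph{Step 1 (keeping the same alphabet).} A non-Parry number is not an integer: for integer $\beta$ one has $d_\beta(1)=(\beta,0,0,\dots)$, which is finite. So $\beta_0\notin\mathbb N$, and after shrinking $\varepsilon$ we may assume $(\beta_0-\varepsilon,\beta_0+\varepsilon)$ contains no integer. Then $m=\lceil\beta\rceil$ is constant on this interval. Hence every $R_\beta$ with $\beta$ in the interval has the same number $m$ of maps and the same alphabet $\{0,\dots,m-1\}$, and all $R_\beta$ live in $\mathcal F_C^{(m)}$.

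\emph{Step 2 (the shifts differ).} I will invoke Parry's theorem. Let $d^*_\beta(1)$ be the quasi-greedy expansion of $1$. Then a sequence $\underline c$ lies in $\Sigma_\beta$ if and only if $\sigma^k(\underline c)\preceq d^*_\beta(1)$ in the lexicographic order for every $k\ge 0$. Moreover, $\beta\mapsto d^*_\beta(1)$ is strictly increasing. Take any $\beta\ne\beta_0$ in the interval, say $\beta>\beta_0$. The sequence $d^*_\beta(1)$ is itself admissible for $\Sigma_\beta$, being the limit of expansions of points close to $1$. But it is strictly larger than $d^*_{\beta_0}(1)$, so it does not belong to $\Sigma_{\beta_0}$. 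Hence $\Sigma_\beta\ne\Sigma_{\beta_0}$, and since $\iota$ is a bijection, $\Sigma^-_\beta\neq\Sigma^-_{\beta_0}$. The non-Parry hypothesis only needs to be used through Step 1. Alternatively, it guarantees that $d_{\beta_0}(1)$ is infinite and not eventually periodic, so that $d_{\beta_0}(1)=d^*_{\beta_0}(1)$ and any perturbation changes it. If the strict monotonicity of $d^*_\beta(1)$ is felt to need justification, I would cite \cite{Bla89,Sch97}, or argue directly: $\sum_k c_k\beta^{-k}=1$ together with termwise monotonicity in $\beta$ forces the digits to change.

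\emph{Step 3 (closeness in $D$).} For $j\le m-2$ the domains are both $[0,1]$. Take $\zeta=\mathrm{id}$; then $d_{C^0}(f^{\beta}_j,f^{\beta_0}_j)\le \sup_x |x+j|\,|\beta^{-1}-\beta_0^{-1}|\le m|\beta^{-1}-\beta_0^{-1}|$. For $j=m-1$ the domains are $[0,\beta-(m-1)]$ and $[0,\beta_0-(m-1)]$, both nondegenerate intervals by Step 1. Take the linear homeomorphism $\zeta(x)=x\,(\beta-m+1)/(\beta_0-m+1)$ between them. Then $d_{C^0}(\zeta,\mathrm{id})\le|\beta-\beta_0|$ and the Hausdorff distance between the domains is $|\beta-\beta_0|$. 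Also $d_{C^0}(f^{\beta_0}_{m-1},f^{\beta}_{m-1}\circ\zeta)=O(|\beta-\beta_0|)$, by the uniform Lipschitz dependence of $(x+j)/\beta$ on $x$ and $\beta$. The symmetric term $d^0_{\mathcal S}$ with the roles swapped is handled in the same way. Thus $D(R_\beta,R_{\beta_0})\to0$ as $\beta\to\beta_0$. Every neighborhood of $R_{\beta_0}$ therefore contains some $R_\beta$ with $\Sigma^-_\beta\ne\Sigma^-_{\beta_0}$, so $R_{\beta_0}$ is not combinatorially stable.

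The main obstacle is Step 2: making precise that distinct parameters give distinct shifts. That is where Parry's criterion, and the distinction between $d_\beta(1)$ and $d^*_\beta(1)$, must be handled carefully. Steps 1 and 3 are routine.
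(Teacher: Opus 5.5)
Your proof is correct, but it takes a different route from the paper's. The paper uses the non-Parry hypothesis essentially. Since $\Sigma_{\beta_0}$ is not sofic and simple Parry numbers are dense in $(1,+\infty)$, it picks a simple Parry $\beta$ arbitrarily close to $\beta_0$; then $\Sigma_\beta$ is of finite type, so the two shifts must differ. You instead use Parry's lexicographic description of $\Sigma_\beta$ through the quasi-greedy expansion $d^*_\beta(1)$, together with the strict monotonicity of $\beta\mapsto d^*_\beta(1)$. This gives $\Sigma_\beta\neq\Sigma_{\beta_0}$ for \emph{every} $\beta\neq\beta_0$, which is both more elementary and stronger. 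It shows that $\beta\mapsto\Sigma_\beta$ is injective, so the non-Parry assumption is needed only to guarantee $\beta_0\notin\mathbb N$, and in fact every $R_\beta$ with non-integer $\beta$ is combinatorially unstable. In exchange, the paper's argument carries qualitative information about the bifurcation: arbitrarily close to $R_{\beta_0}$ there are systems whose code space is a subshift of finite type, which ties in with the Walters-type discussion around Corollary~\ref{thm:negativeshadowing-classification}. Your Steps~1 and~3 also make explicit what the paper leaves implicit in its final ``in particular''. First, $\lceil\beta\rceil$ is locally constant, so all the $R_\beta$ lie in the same $\mathcal F_C^{(m)}$. Second, $D(R_\beta,R_{\beta_0})\to 0$, via the linear homeomorphism between the last domains. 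One small remark on your fallback justification via $\sum_k c_k\beta^{-k}=1$: it only yields $d^*_\beta(1)\neq d^*_{\beta_0}(1)$, not the order. That still suffices, because the lexicographic order is total and the larger of the two quasi-greedy sequences belongs to its own $\beta$-shift but not to the other.
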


\begin{proof}
As $\beta_0>1$ is non-Parry then $\Sigma_{\beta_0}$ is not sofic.
Moreover, simple Parry numbers are dense in $(1,+\infty)$ \cite{Pa60}. In particular, 
for any $\vep>0$ there exists 
a simple Parry number $\beta\in({\beta_0}-\varepsilon,{\beta_0}+\varepsilon) \cap (1,+\infty)$.
For such $\beta$, the shift $\Sigma_{\beta}$ is an SFT, hence sofic.
In particular $\Sigma_{\beta}\neq \Sigma_{\beta_0}$ and, ultimately, $\beta\mapsto \Sigma_\beta^-=\iota(\Sigma_\beta)$ is not locally constant
at ${\beta_0}$.
 This proves that $R_{\beta_0}$ is not combinatorially stable.
\end{proof}

%%%%%%%%%%%%
\subsection{Proof of Theorem~\ref{thm:beta}}

We proceed to construct contractive local IFSs with OSC and denseness of periodic orbits which are not combinatorially stable nor satisfy the shadowing property.
Let $T_\beta$ be the $\beta$-transformation with  $4<\beta<5$. 
The space $\Sigma_\beta\subset \{0,1,2, 3,4\}^{\mathbb N}$ can be characterized as
\[
\Sigma_\beta
=
\Bigl\{
\underline c \in\{0,1,2, 3,4\}^{\mathbb N}:\ 
\sigma^k(\underline c)\preceq d_\beta(1)
\ \text{for all } k\ge0
\Bigr\},
\]
where $d_\beta(1)$ is the sequence associated to the $\beta$-expansion of $1$
and $\preceq$ denotes the lexicographical order (see e.g. \cite[Proposition 2.3]{Bla89}).

\smallskip
Consider the contractive local IFS
$R^{(0,2,4)}_{\beta}=(X_i^\beta,f_i^\beta)_{i=0,2,4}$,
which clearly satisfies the open set condition.
Define the compact and shift invariant subset
\begin{equation}
\label{eq:rest-posit}
\Sigma_\beta^{(0,2,4)}
=
\Sigma_\beta\cap\{0,2,4\}^{\mathbb N}.    
\end{equation}
If $\beta$ is a Parry number then
$\Sigma_\beta$ is sofic, hence $\Sigma_\beta^{(0,2,4)}$ is also sofic.
Moreover, since the shift space is discrete the denseness of periodic orbits in $\Sigma_\beta$ it is not hard to check that the same holds for $\Sigma_\beta^{(0,2,4)}$.
 We also observe that 
$$
\Sigma_\beta^{(0,2,4),-}=\iota(\Sigma_\beta^{(0,2,4)})
=\iota(\Sigma_\beta\cap\{0,2,4\}^{\mathbb N})
$$ is the code space for $R^{(0,2,4)}_{\beta}.$
In fact, this is a consequence of the fact that given $\underline b=(\dots,b_{-2},b_{-1})\in\Sigma^-$, the set $V_{[\underline b]_k}$ is non-empty if and only if 
$(b_{-1},b_{-2},\dots,b_{-k})$ appears in some sequence that belongs to $\Sigma_\beta$, and that the local IFS does not allow symbols $\{1,3\}$.    
We need the following key ingredient.

\begin{lemma}\label{le:explicit-beta}
There exists $4<\beta<5$ so that 
$\Sigma^{(0,2,4)}_{\beta}$ is not a subshift of finite type.
\end{lemma}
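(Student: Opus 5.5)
The plan is to choose $\beta$ through its expansion of $1$, arranged so that $\Sigma^{(0,2,4)}_\beta$ becomes, after relabeling letters, the $\beta'$-shift of a non-Parry number $\beta'\in(2,3)$. Then the known fact that non-Parry $\beta$-shifts are not sofic (recalled before Proposition~\ref{prop:nonstable-nonparry}) finishes the argument.

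\emph{Step 1: choosing the expansion.} Let $t=(t_1,t_2,\dots)\in\{0,2,4\}^{\mathbb N}$ have $t_1=4$ and $t_2=2$. For $j\ge 3$ set $t_j=2$ when $j$ is a perfect square and $t_j=0$ otherwise. Then:
\begin{itemize}
\item $t$ has infinitely many nonzero entries and is not eventually periodic, since the gaps between the $2$'s grow.
\item Every shift $\sigma^k(t)$ with $k\ge1$ begins with a symbol at most $2<4=t_1$, so $\sigma^k(t)\prec t$ strictly for all $k\ge1$.
\end{itemize}
By Parry's characterization of $\beta$-expansions of $1$ (see \cite{Pa60}), such a sequence equals $d_\beta(1)$ for a unique $\beta>1$. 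Since $t_1=4$, this $\beta$ lies in $(4,5)$, and it is a non-Parry number because $d_\beta(1)=t$ is infinite and not eventually periodic.

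\emph{Step 2: relabeling.} By the lexicographic description of $\Sigma_\beta$ quoted above (\cite[Proposition 2.3]{Bla89}),
\[
\Sigma^{(0,2,4)}_\beta=\{\underline c\in\{0,2,4\}^{\mathbb N}:\ \sigma^k(\underline c)\preceq t \ \text{for all } k\ge 0\}.
\]
Let $\phi$ be the letter map $0\mapsto0$, $2\mapsto1$, $4\mapsto2$.
\begin{itemize}
\item $\phi$ is order preserving, so coordinatewise it preserves the lexicographic order on sequences and commutes with $\sigma$.
\item The sequence $t'=\phi(t)\in\{0,1,2\}^{\mathbb N}$ starts with $2$, satisfies $\sigma^k(t')\prec t'$ for $k\ge1$, and is not eventually periodic.
\item Applying Parry's theorem again gives a unique $\beta'\in(2,3)$ with $d_{\beta'}(1)=t'$, and $\beta'$ is non-Parry.
\item Consequently $\phi$ is a topological conjugacy from $\Sigma^{(0,2,4)}_\beta$ onto $\Sigma_{\beta'}=\{\underline c\in\{0,1,2\}^{\mathbb N}:\sigma^k(\underline c)\preceq t'\ \forall k\}$.
\end{itemize}

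\emph{Step 3: conclusion.} Since $\beta'$ is non-Parry, $\Sigma_{\beta'}$ is not sofic (\cite{Bla89,Sch97}), and in particular it is not of finite type. Being of finite type is a conjugacy invariant, so $\Sigma^{(0,2,4)}_\beta$ is not a subshift of finite type.

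\emph{Main obstacle.} I expect the hardest part to be the bookkeeping around the lexicographic characterizations. Step 1 needs the precise form of Parry's criterion: strict inequality $\sigma^k(t)\prec t$ for an infinite expansion, with no need to pass to the quasi-greedy expansion, since $t$ is not finite. Step 2 needs the description of $\Sigma_\beta$ by $\preceq d_\beta(1)$ to hold in the non-simple-Parry case, which is exactly the situation arranged here.
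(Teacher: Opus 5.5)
Your proof is correct, and it takes a different and more complete route than the paper's.

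\textbf{Common ground.} Both arguments choose $d_\beta(1)$ using only letters from $\{0,2,4\}$, and both invoke Parry's criterion to realize it as the expansion of $1$ for some $\beta\in(4,5)$.

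\textbf{The paper's route.} It takes $d_\beta(1)=4\,0^{n_1}\,4\,0^{n_2}\cdots$ with strictly increasing gaps. The increasing gaps are needed there: all nonzero letters are equal, so $\sigma^k(\underline a)\prec\underline a$ can only be read off further along the sequence. It then bounds $\beta<5$ through sparseness of the $n_j$. Finally, it argues non-finite-type directly, by saying which sequences $4\,0^{\ell_1}\,4\,0^{\ell_2}\cdots$ lie in $\Sigma^{(0,2,4)}_\beta$. That last step is only sketched. The stated criterion $\ell_i\ge n_i$ for all $i$ is sufficient but not necessary. The true lexicographic condition is $(\ell_j,\ell_{j+1},\dots)\succeq(n_1,n_2,\dots)$ for every $j$. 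Also, non-finite-type of $\Sigma_\beta$ does not by itself pass to the subshift $\Sigma_\beta\cap\{0,2,4\}^{\mathbb N}$.

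\textbf{Your route.} Taking $t_1=4$ and $t_j\in\{0,2\}$ for $j\ge 2$ makes the Parry condition and $\beta\in(4,5)$ immediate. Your order-preserving relabeling then identifies $\Sigma^{(0,2,4)}_\beta$ exactly with the $\beta'$-shift for a non-Parry $\beta'\in(2,3)$. So the failure of finite type is imported from the classical theorems the paper already quotes, rather than reproved by hand.

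\textbf{Remarks.} The relabeling trick applies verbatim to the paper's own sequence, producing $2\,0^{n_1}\,2\,0^{n_2}\cdots$, so it supplies exactly the justification missing from the paper's final step. Also, you only need that $\beta'$ is not a simple Parry number, i.e.\ that $t'$ is infinite, to rule out finite type. Non-soficity is more than required.
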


\begin{proof} 
Let $(n_j)_{j\geq 1}$ be a strictly increasing sequence of positive integers 
and consider the sequence
\[
\underline a:=4\,0^{n_1}\,4\,0^{n_2}\,4\,0^{n_3}\cdots \in \{0,1,2,3,4\}^{\mathbb N}.
\]
Since the sequence is strictly increasing then  $\underline a$ is not pre-periodic and 
\begin{equation}
    \label{eq:construction-a}
    \sigma^k(\underline a)\preceq \underline a \qquad
    \text{ for every $k\ge 1$.}
\end{equation}
By the proof of \cite[Proposition 2.4]{Bla89} one has that $\underline a =d_\beta(1)$ for the (unique) 
$
\beta>1$ defined by
$$
1= 4\beta^{-1} +  4\beta^{-(n_1+2)}+ 4\beta^{-(n_1+n_2+3)}+ 4\beta^{-(n_1+n_2+n_3+4)} + \dots 
$$
or, equivalently, 
$$
\beta= 4 +  4\beta^{-(n_1+1)}+ 4\beta^{-(n_1+n_2+2)}+ 4\beta^{-(n_1+n_2+n_3+3)} + \dots .
$$
This guarantees that $\beta>4$. Moreover, one can choose the sequence $(n_j)_{j\ge 1}$ to be sparse in such a way that 
$$
\beta \le 4 + \sum_{k\ge 1} 4^{-\ell_k} < 5
\quad \text{where $\ell_k=k+\sum_{i=1}^k n_i$.}
$$
By construction $\beta$ is a non-Parry number, hence 
$$
\Sigma_\beta =\Big\{\underline c \in \{0,1,2,3,4\}^{\mathbb N} \colon \sigma^k(\underline c) 
\preceq 
{\underline a} = d_\beta(1) \; \text{for every } k\ge 0\Big\} 
$$ 
is not a subshift of finite type. Furthermore, ~\eqref{eq:construction-a} ensures that $\underline a\in \Sigma_\beta$.
This implies that $\Sigma^{(0,2,4)}_{\beta}$ is not a subshift of finite type.
Indeed, this is a consequence of the fact that the relation $\sigma^k(\underline c)\preceq \underline a$ for every $k\ge 0$ that determines the $\beta$-shift implies that $\underline c=40^{\ell_1} 4 0^{\ell_2}4 0^{\ell_3} \dots \in \Sigma^{(0,2,4)}_{\beta}$ if and only if $\ell_i\ge n_i$ for every $i\ge 1$. This finishes the proof of the lemma.
\end{proof}
\color{black}

Now, we observe that as $\beta$ is a non-Parry number and 
$\sigma\mid_{\Sigma^{(0,2,4)}_{\beta}}$ is not a subshift of finite type, then 
by the classical theorem of Walters \cite{Wa78} the subshift does not have the shadowing property.
Hence the main results ensure that $R_\beta$ is not combinatorially stable and  $A_{\beta}$ does not have the negative shadowing. 
\hfill $\square$

\color{black}

%%%%%%%%%%%%%%%%%%%%%%%%%%%%%%%%%
\section{Topologically stable local IFSs  }\label{sec:contractive-stable}

In Subsection~\ref{sec:top-stab-IFS} we extend the results in \cite{AT25} by proving that every contractive IFS satisfying the open set condition are (strongly) topologically stable, thus proving Theorem~\ref{thm:combinatorial-topological} in this context. In Subsection~\ref{sec:top-stab-local-IFS} finish the proof of Theorem~\ref{thm:combinatorial-topological} by extending this result for local IFSs under an additional combinatorial assumption. 
Finally in Subsection~\ref{sec:charact+topstability}
we prove Theorem~\ref{thm:concordantshadowing-implies-stable} on the characterization of topological stability for local IFSs on compact manifolds.

%%%%%%%%%%%%%%%%%%%%%%%%
\subsection{Contractive IFSs are topologically stable}
\label{sec:top-stab-IFS}

We initiate this subsection by proving that 
pseudo-orbits of contractive IFSs are asymptotic to the attractor. 

\begin{proposition}\label{prop:shadowingIFSG1}
    Let $R=(X,f_j)_{1\le j \le n}$ be a contractive IFS. Then, for any $\vep>0$ there exist $\delta>0$ and $N \in \mathbb{N}$ such that every $(\underline{a},\delta)$-pseudo-orbit $(x_n)_{n\ge 0}$ of $R$  is $(\underline{a},\vep)$-shadowed by a point $x\in X$  
    so that 
    $$d(f_{\underline a}^i(x) , A_R) < \vep/2
    \quad\text{and}\quad
    d(f_{\underline a}^i(x) , x_i) < \vep/2 $$ for every $i \geq N$. 
In particular $d(x_i,A_R) < \vep$ for every $i\ge N$.
\end{proposition}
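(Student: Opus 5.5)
For a global contractive IFS every $\underline a\in\Sigma^+$ is admissible and every point of $X$ has an infinite orbit along $\underline a$, so $I(\underline a)=X$. The plan is therefore to choose the shadowing point inside the attractor $A_R$ and use its invariance, together with Lemma~\ref{lem:L1}.

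Let $\lambda\in(0,1)$ be a common contraction rate for the maps $f_j$. Given $\vep>0$, fix $\delta>0$ with $\frac{\delta}{1-\lambda}<\vep/4$. Fix $N\ge 1$ with $\lambda^N\diam(X)<\vep/4$.

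Now let $(x_k)_{k\ge 0}$ be an $(\underline a,\delta)$-pseudo-orbit. Choose $x\in A_R$ with $d(x,x_0)=d(x_0,A_R)\le\diam(X)$; this is possible because $A_R$ is a nonempty compact set. Since $F(A_R)=A_R$, we have $f_j(A_R)\subset A_R$ for every $j$, and hence $f^i_{\underline a}(x)\in A_R$ for all $i\ge 0$. This gives the first estimate trivially: $d(f^i_{\underline a}(x),A_R)=0<\vep/2$.

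For the second estimate, Lemma~\ref{lem:L1} applies with $I(\underline a)=X$, since all compositions are defined. It gives
\[
d(f^i_{\underline a}(x),x_i)\le \frac{\delta}{1-\lambda}+\lambda^i d(x,x_0) < \frac{\vep}{4}+\lambda^i\diam(X).
\]
This is below $\vep$ for every $i\ge 0$, so $x$ $(\underline a,\vep)$-shadows the pseudo-orbit. For $i\ge N$ the right-hand side is below $\vep/2$. The final claim then follows from the triangle inequality:
\[
d(x_i,A_R)\le d(x_i,f^i_{\underline a}(x))+d(f^i_{\underline a}(x),A_R)<\vep/2.
\]

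There is no real obstacle here. The only point needing care is to pick the shadow in $A_R$, so that forward invariance handles the attractor estimate, while the contraction estimate absorbs the initial error $d(x,x_0)$ after $N$ steps. Note that $N$ depends only on $\vep$ through $\diam(X)$, not on the particular pseudo-orbit.
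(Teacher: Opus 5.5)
There is a genuine gap: the shadowing point cannot be taken in $A_R$. Lemma~\ref{lem:L1} gives $d(f^i_{\underline a}(x),x_i)\le \frac{\delta}{1-\lambda}+\lambda^i d(x_0,A_R)$ for your point $x\in A_R$. For $0\le i<N$ the term $\lambda^i d(x_0,A_R)$ need not be small. Nothing forces a pseudo-orbit in $X$ to start near the attractor; indeed, the final assertion of the proposition says pseudo-orbits are only \emph{eventually} close to $A_R$. So your sentence ``this is below $\vep$ for every $i\ge 0$'' is false; your bound is only good for $i\ge N$.

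In fact no point of $A_R$ can serve as the shadow in general, because its whole orbit stays in $A_R$. Take $X=[0,1]$, $f_1(t)=t/2$, $f_2(t)=t/4$, so that $A_R=\{0\}$. Consider the true orbit $x_k=2^{-k}$ along $\underline a=(1,1,\dots)$, which is an $(\underline a,\delta)$-pseudo-orbit for every $\delta>0$. For $\vep\le 1/2$ one has $d(f_{a_0}(0),x_1)=1/2\ge\vep$, so this pseudo-orbit is not $(\underline a,\vep)$-shadowed by any point of $A_R$. The remark following the proposition in the paper makes exactly this point.

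The repair is to swap the roles of the two points. Take the shadow to be $x_0$ itself, which is allowed since $I(\underline a)=X$ for a global IFS. Lemma~\ref{lem:L1} with $d(x,x_0)=0$ then gives $d(f^i_{\underline a}(x_0),x_i)\le \frac{\delta}{1-\lambda}<\vep/4$ for all $i\ge 0$.

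Use a point $z\in A_R$ only as a comparison orbit. By forward invariance $f^i_{\underline a}(z)\in A_R$, and by contraction $d(f^i_{\underline a}(x_0),f^i_{\underline a}(z))\le \lambda^i\diam(X)<\vep/4$ for $i\ge N$. This yields both tail estimates and the final claim.

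This matches the key observation in the paper's proof, namely that $f^N_{\underline a}(y)\in F^N_R(X)$ lies in the $\lambda^N\diam(X)$-neighborhood of $A_R$. The paper, however, obtains its shadow from the concordant shadowing property of contractive IFSs and a concatenated pseudo-orbit. The corrected version of your argument needs only Lemma~\ref{lem:L1} and is more elementary.
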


\begin{proof}
Fix $\vep>0$. Let $\lambda\in (0,1)$ be a contraction constant for $R$ and let $0<\delta<\vep/4$ be given by the concordant shadowing property
so that every $(\underline a,\delta)$-pseudo orbit is  $(\underline a,\vep/4)$-shadowed (cf. \cite{GV06}). Choose $N\ge 1$ so that 
$\lambda^N \diam(X)<\delta$, where $\lambda\in (0,1)$ is an hyperbolicity constant for $R$.
    Take an $(\underline a,\delta)$-pseudo orbit $(x_n)_{n\ge 0}$ of $R$ and denote by $y\in X$ an $(\underline a,\vep/4)$-shadow. Note that 
    $$
    f_{\underline a}^N(y) \in F_R^N(X) 
    $$
    belongs to the $\lambda^N\cdot \diam(X)$-neighborhood of $A_R$. In particular, there exists $z\in A_R$ so that $d(f_{\underline a}^N(y),z)<\delta$. Now, given the $(\underline a,\delta)$ pseudo-orbit 
    $$
    y, f_{a_0}(y), \dots, f_{\underline a}^{N-1}(y), z, f_{a_N}(z), 
    f_{a_{N+1}}(f_{a_N}(z)), \dots 
    $$
there exists $x\in X$ which $(\underline a, \vep/4)$-shadows it. By triangular inequality one concludes that 
     \[
     d(x_k, f_{\sigma^N\underline a}^{k-N}(z))
     \le 
     d(x_k,f_{\underline a}^k(x))+
     d(f_{\underline a}^k(x), f_{\sigma^N\underline a}^{k-N}(z)) < \vep\]
     for every $k\ge N$. This proves the proposition.
\end{proof}

\begin{remark}
The previous result is optimal in the following sense: if $A_R$ is a proper subset of $X$ then any orbit of a point that does not belong to $A_R$ is a $\delta$-pseudo orbit of $R$ (for any  $\delta>0$) but can not be $\vep$-shadowed by an orbit entirely contained in the attractor.   
\end{remark}

\bigskip

\begin{theorem}
\label{prop:shadowingIFSG2}
    Let $R=(X,f_j)_{1\le j \le n}$ be a contractive IFS. If $R$ satisfies 
    the open set condition
    then it is topologically stable in $\mathcal F_C$.
\end{theorem}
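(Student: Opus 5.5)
We prove the statement by building the conjugacy through the symbolic coding, using the open set condition on both the unperturbed and the perturbed system. Let $R=(X,f_j)_{1\le j\le n}$ be contractive with common rate $\lambda\in(0,1)$ and satisfy the OSC. Since $R$ is a global IFS, every nearby system $R_{\mathfrak Y}=(Y_j,g_j)_{1\le j\le n}$ in $\mathcal F_C$ has its code space determined by admissibility in the domains $Y_j$.

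\textbf{Step 1: fix the neighborhood and the combinatorics.} For $D(R,R_{\mathfrak Y})$ small, $\dist_H(Y_j,X)$ is small. The domains $Y_j$ must be homeomorphic to $X$, since otherwise $d_{\mathcal S}=L$. We want $Y_j\supseteq A_{\mathfrak Y}$ for every $j$, which forces $\Sigma^-_{\mathfrak Y}=\Sigma^-$ and $\widehat\Sigma^+_{\mathfrak Y}=\Sigma^+=\widehat\Sigma^+_R$. We would argue this via Theorem~\ref{thm:usc} and Corollary~\ref{cor:shadowclose}: contractive IFSs have concordant shadowing \cite{GV06}, so by Theorem~\ref{thm:shadowing-usc} we get $\Sigma^-_{\mathfrak Y}\subseteq\Sigma^-$. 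We choose $\tau=\mathrm{id}$.

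\textbf{Step 2: build the conjugacy.} By Theorem~\ref{thm:classification}, $\pi_R:\Sigma^-\to A_R$ is a homeomorphism intertwining $\tau_j$ with $f_j$. We then need the OSC for $R_{\mathfrak Y}$, so that $\pi_{\mathfrak Y}:\Sigma^-\to A_{\mathfrak Y}$ is also a homeomorphism. We define
\[
h:=\pi_{\mathfrak Y}\circ\pi_R^{-1},
\]
and set $h_{\underline a}=h$ for every $\underline a$. Then $h\circ f_j=g_j\circ h$ on $A_R$, because both sides equal the image of $\tau_j(\underline b)$ under the respective code maps. Iterating this relation gives \eqref{eq:conjtopconj}. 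Hölder continuity of the code maps also yields $d(h,\mathrm{id})$ small, since $\pi_{\mathfrak Y}$ and $\pi_R$ are uniformly close: each finite-level cylinder image $g_{[\underline b]_k}(Y)$ is close to $f_{[\underline b]_k}(X)$.

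\textbf{Main obstacle: persistence of the OSC.} The hard part is the persistence of the open set condition under $C^0$-small contractive perturbations. As stated, the OSC is not obviously open. We would first try to use a quantitative separation: with $U$ the OSC open set, the first-level pieces $f_i(A_R)$ are pairwise disjoint compact sets at positive distance $\zeta$, by the same kind of estimate as \eqref{eq:opencond1}. This interpretation of the OSC, as disjointness of first-level cylinders on the attractor, is what makes $\pi_R$ injective. Under a $\zeta/3$-perturbation, using the upper semicontinuity of the attractor from Theorem~\ref{thm:shadowing-usc}, the sets $g_i(A_{\mathfrak Y})$ lie in the $\zeta/3$-neighborhoods of $f_i(A_R)$. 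They are therefore still disjoint, so $\pi_{\mathfrak Y}$ is injective.

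If this separation-based route fails, we would instead prove injectivity of $h$ directly. We would use concordant shadowing of $R$ through Corollary~\ref{cor:shadowclose} to show that distinct codes separate at the first level where they differ.
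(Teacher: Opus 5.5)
Step~1 is the main gap. Theorem~\ref{thm:shadowing-usc} only gives $\Sigma^-_{\mathfrak Y}\subseteq\Sigma^-_{\mathfrak X}$, and for a global IFS this is the vacuous inclusion $\Sigma^-_{\mathfrak Y}\subseteq\Sigma^-$. Your construction needs the reverse inclusion $\Sigma^-\subseteq\Sigma^-_{\mathfrak Y}$, i.e.\ $A_{\mathfrak Y}\subseteq Y_j$ for all $j$. Without it, $\pi_{\mathfrak Y}\circ\pi_R^{-1}$ is not defined on all of $A_R$, $g_j$ is not defined on $h(A_R)$, and $\tau=\mathrm{id}$ does not map $\Sigma^+$ into $\widehat\Sigma^+_{\mathfrak Y}$. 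For perturbations that shrink a domain this inclusion is false. Take $X=[0,1]$, $f_1(x)=x/3$, $f_2(x)=x/3+2/3$, $Y_1=X$, $Y_2=[\eta,1]$ and $g_j=f_j|_{Y_j}$. Using $\zeta(x)=\eta+(1-\eta)x$ in the Skorohod distance gives $D(R,R_{\mathfrak Y})\le 3\eta$. But $\{0\}$ is $F_{\mathfrak Y}$-invariant, so $0\in A_{\mathfrak Y}\setminus Y_2$. Hence the code $(\dots,1,1,2)$ is not in $\Sigma^-_{\mathfrak Y}$, and every finite word containing the symbol $2$ is undefined at $0$. So Step~1 is not merely unproved; it is false for general domains. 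The paper's proof never meets this because it only considers perturbations of the form $R_{\mathfrak Y}=(X,g_j)_{1\le j\le n}$, for which $\Sigma^-_{\mathfrak Y}=\Sigma^-$ is automatic. You have to impose the same restriction.

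Your handling of persistence of the OSC also has a hole. Disjointness of the first-level pieces $g_i(A_{\mathfrak Y}\cap Y_i)$ only separates codes whose last symbols differ. For two codes with a common suffix you must cancel that suffix, which requires each $g_i$ to be injective on $A_{\mathfrak Y}$, and a $C^0$-small contraction need not be. With $f_1,f_2$ as above, the maps $g_1(x)=\min(x,1-\eta)/3$ and $g_2=f_2$ on $X$ form a global contractive IFS with $D(R,R_{\mathfrak Y})\le\eta/3$ and disjoint first-level images. Yet the points $1=g_2(1)$ and $1-3^{-k}=g_2^k(0)$ (with $0=g_1(0)$ and $3^{-k}\le\eta$) both lie in $A_{\mathfrak Y}$ and have the same $g_1$-image. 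So $\pi_{\mathfrak Y}$ is not injective. Moreover, since $f_1,f_2$ are injective, the relation $h_{\underline a}\circ f_{a_0}=g_{\tau(\underline a)_0}\circ h_{\underline a}$ forces $\tau(\underline a)_0=2$ for every $\underline a$. This is incompatible with $\tau$ being onto $\widehat\Sigma^+_{\mathfrak Y}=\Sigma^+$. The paper's proof passes over the same point: it derives the OSC for $R_{\mathfrak Y}$ from first-level separation and then treats $\pi_{\mathfrak Y}$ as a homeomorphism. So you need an extra hypothesis keeping the perturbed maps injective on the attractor.

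With both restrictions in place, your direct map $h=\pi_{\mathfrak Y}\circ\pi_R^{-1}$ is correct. It is simply the inverse of the paper's $h_{\underline a}$, which by \eqref{eq:codingha} equals $\pi_{\mathfrak X}\circ\pi_{\mathfrak Y}^{-1}$ for every $\underline a$. The paper's detour through Corollary~\ref{cor:shadowclose} only serves to make $h$ $C^0$-close to the identity, which the definition of topological stability does not require.
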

\begin{proof}
    Assume that $R=(X,f_j)_{1\le j \le n}$ 
    is a contractive IFS satisfying the open set condition and $\lambda\in (0,1)$ denote its contraction. In particular there exists $0<\vep<L$ so that every contractive IFS $R_{\mathfrak Y}=(X,g_j)_{1\le j \le n}$ which satisfies 
    $D\Big(R_{\mathfrak X}, R_{\mathfrak Y}\Big)<\vep$ satisfies the open set condition because, by triangular inequality,
    $$
    \text{dist}_H(g_i(X),g_j(X))
    \ge \text{dist}_H(f_i(X),f_j(X))
    - 2\vep.
    $$
    Since $R$ is a contractive IFS then it satisfies the concordant shadowing property. In particular, by Corollary~\ref{cor:shadowclose} there exists $0<\delta<\vep$ such that every $(\underline a,\delta)$-pseudo orbit for $S$ is $(\underline a,\vep)$-shadowed by a point for $R$.

\medskip
Fix $\underline a\in  \Sigma^+$. Given $y\in A_{\mathfrak Y}$, one can write
$$y
=\pi_{\mathfrak Y}(\underline b)
= \lim_{k\to+\infty} g_{[\underline b]}^k(z_k)
$$
for each $k\ge 1$, 
where $z_k\in X$ and 
$g_{[\underline b]}^k=g_{b_{-1}} \circ g_{b_{-2}}\circ \dots \circ g_{b_{-k}}$.
Let $k_0\ge 1$ be such that $d(g_{[\underline b]}^k(z_k),y)<\delta$ for every $k\ge k_0$. In consequence, for each $k\ge k_0$ consider the 
$(b_{-k}, \dots , b_{-2}, b_{-1})\ast \underline a,\delta)$ pseudo-orbit 
    $$
    z_k, g_{[\underline b]}(z_k),  \dots, g_{[\underline b]}^{k-1}(z_k), y, g_{a_0}(y), 
    g_{a_{1}}(g_{a_0}(y)), \dots 
    $$
for $R_{\mathfrak Y}$. Using Corollary~\ref{cor:shadowclose}, this pseudo-orbit is 
$((b_{-k}, \dots , b_{-2}, b_{-1})\ast \underline a,\vep)$ shadowed by a point $w_k\in X$ (note that $w_k$ depends on $\underline a$ and $\underline b$, even though the dependence on $\underline b$ will drop due to contraction).
Consider the point
\begin{equation}
    \label{eqdef:comj}
h_{\underline a}(y)=\lim_{k\to+\infty} f_{[\underline b]}^k(w_k) 
= 
\lim_{k\to+\infty} f_{\tilde{\underline a}}^k(w_k) \in X_{a_0}
\end{equation}
where $\tilde{\underline a} = (b_{-k}, \dots, b_{-1})\ast \underline a$.
The map $h_{\underline a}$ is well defined as the previous limit does exist due to uniform contraction, because $\diam (f_{\tilde{\underline a}}^k(X))\le \lambda^k \diam(X)$ for each $k\ge 1$.
Furthermore, by \eqref{eqdef:comj}, one has in addition
\begin{equation}
    \label{eq:codingha}
    h_{\underline a}(y) = \pi_{\mathfrak X}(\underline b) \in A_{\mathfrak X}
\end{equation}
hence $h_{\underline a}: A_{\mathfrak Y} \to A_{\mathfrak X}$ is a well defined map.
By construction, $d(h_{\underline a}(y),y)<\vep$ for every $y\in A_{\mathfrak Y}$, hence $d_{C^0}(h_{\underline a},id_{A_{\mathfrak Y}})<\vep$.

\smallskip
We proceed to show that $h_{\underline a}$ is an homeomorphism. Since $A_{\mathfrak Y}$ is a compact metric space it is enough to show that $h_{\underline a}$ is a continuous bijection. On the one hand, as $S$ satisfies the open set condition one knows that $\pi_{\mathfrak Y}: \Sigma^+ \to A_{\mathfrak Y}$ is an homeomorphism. Thus, given $N\ge 1$ there exists $\delta>0$ so that if 
$\tilde y\in A_{\mathfrak Y}$ with $d(y,\tilde y)<\delta$ then 
$d_{\Sigma^+}(\underline b, \tilde{\underline b})<e^{-N}$ where
$\tilde y=\pi_{\mathfrak Y}(\tilde{\underline b})$. In consequence, 
$b_{-j}=\tilde b_{-j}$ for each $1\le j \le N$ and 
$$
d(h_{\underline a}(y),h_{\underline a}(\tilde y)) < \diam(f_{\tilde{\underline a}}^N(X_N)) \le \lambda^N \diam(X). 
$$
This proves the continuity of $h_{\underline a}.$ The injectivity of $h_{\underline a}$ follows from the injectivity of the maps $\pi_{\mathfrak X}$ and $\pi_{\mathfrak Y}$. Surjectivity is obtained from the fact that each $x\in A_{\mathfrak X}$ can be written as 
$$
x=\pi_{\mathfrak X} (\underline b), \qquad \underline b \in \Sigma^+
$$
and that, by construction, the point $y=\pi_{\mathfrak Y}(\underline b) \in A_{\mathfrak Y}$ satisfies $h_{\underline a}(y) = \pi_{\mathfrak X}(\underline b)=x$.

\smallskip
Finally, it remains to show that 
    \begin{equation*}
        h_{\underline a}\circ g_{a_k}\dots  g_{a_1} g_{a_0}
= f_{a_k}\dots  f_{a_1} f_{a_0} \circ h_{\underline a}
    \end{equation*}
for each $k\ge 1$ (this corresponds to the notion of topological conjugacy with $\tau=id$). Fix an arbitrary $k\ge 1$. Given
$y=\pi(\underline b)\in A_{\mathfrak Y}$ recall that 
$$
y= \lim_{\ell\to \infty} g_{b_{-1}} \circ g_{b_{-2}} \circ \dots g_{b_{-\ell}}(X)
$$
hence
\begin{align*}
g_{a_k}\dots  g_{a_1} g_{a_0}(y) 
 & =
 \lim_{\ell\to \infty} g_{a_k}\circ \dots  \circ g_{a_1} g_{a_0} \circ g_{b_{-1}} \circ g_{b_{-2}} \circ \dots g_{b_{-\ell}}(X)\\
 &
 = \lim_{\ell\to \infty} g^{\ell+k}_{[\underline b \ast (a_0, a_1, \dots, a_k)]}(X) \\
 & =
 \pi_{\mathfrak Y}(\,\underline b \ast (a_0, a_1, \dots, a_k)\,)\in A_{\mathfrak Y}.    
\end{align*}
By construction (recall ~\eqref{eq:codingha}) one deduces that
\begin{align*}
h_{\underline a}(g_{a_k}\dots  g_{a_1} g_{a_0}(y) ) 
& =
\pi_{\mathfrak X}(\,\underline b \ast (a_0, a_1, \dots, a_k)\,) \\
& =  \lim_{\ell\to \infty} f_{a_k}\circ \dots  \circ f_{a_1} f_{a_0} \circ f_{b_{-1}} \circ f_{b_{-2}} \circ \dots f_{b_{-\ell}}(X) \\
& =   f_{a_k}\circ \dots  \circ f_{a_1} f_{a_0} (\lim_{\ell\to \infty} f_{b_{-1}} \circ f_{b_{-2}} \circ \dots f_{b_{-\ell}}(X)) \\
& =   f_{a_k}\circ \dots  \circ f_{a_1} f_{a_0} (\pi_{\mathfrak X}(\underline b))
\\
& =   f_{a_k}\circ \dots  \circ f_{a_1} f_{a_0} (h_{\underline a}(y)).
\end{align*}
Altogether, as $y\in A_{\mathfrak Y}$ and $k\ge 1$ were arbitrary, this proves that $R$ is topologically stable, as desired, and finishes the proof of the proposition.
\end{proof}

\subsection{Contractive local IFSs with concordant shadowing}
\label{sec:top-stab-local-IFS}

We now prove that combinatorial stability together  with the concordant shadowing property implies topological stability for contractive local IFSs. Although the strategy follows the one used in Subsection~\ref{sec:top-stab-IFS}, the local nature of the dynamics requires additional care in the construction of the conjugating map, since admissibility depends on both past and future itineraries.

\smallskip
    Let $(X,d)$ be a compact metric space and assume that $R_{\mathfrak X}=(X_i,f_i)_{1\le i \le n}$ is a combinatorially stable, contractive local IFS. We proceed to prove that $R_{\mathfrak X}$ is topologically stable.
Since $R_{\mathfrak X}$ is combinatorially stable and satisfies the open set condition, there exists an open neighborhood 
$\mathcal V$ of $R_\mathfrak X$ in $\mathcal F_C$ such that every $R_{\mathfrak Y}\in\mathcal V$ is contractive and satisfies
$\Sigma_{\mathfrak X}^-=\Sigma_{\mathfrak Y}^-.$
In particular, the corresponding bilateral shifts
$\widehat\Sigma_{\mathfrak X}$ and $\widehat\Sigma_{\mathfrak Y}$ coincide (cf. Lemma~\ref{le:past-2-future}). 
In particular
\begin{equation}
    \label{eq:structural+}
\widehat\Sigma_{\mathfrak Y}^+=\widehat\Sigma_{\mathfrak X}^+,
\quad \text{for all 
$\mathfrak Y\in\mathcal V$.}
\end{equation}
Up to diminishing $\mathcal V$ if necessary, one can always assume that every contractive local IFS 
$R_{\mathfrak Y}=(Y_i,g_i)\in \mathcal V$ also satisfies the open set condition.

\smallskip
Now, since $R_{\mathfrak Y}$ is contractive and satisfies OSC, 
the construction of the code space (Definition~\ref{def: code map}) ensures that the local code map $\pi_{\mathfrak Y}$ is well defined by the expression
   $$\pi_{\mathfrak Y}(\underline b)= \bigcap_{k \geq 1} V_{[\underline b]_k} = \lim_{k\to \infty} V_{[\underline b]_k}$$
and that defines an H\"older continuous bijection
$\pi_{\mathfrak Y}: \Sigma_{\mathfrak Y}^- \to A_{\mathfrak Y}$.

\smallskip
We are now in a position to complete the proof of Theorem~\ref{thm:combinatorial-topological}. As relation ~\eqref{eq:structural+} ensures that the combinatorics of possible concatenations of maps in $R_{\mathfrak X}$ and $R_{\mathfrak Y}$ coincide, one can mimic the argument in Subsection~\ref{sec:top-stab-IFS}.
By assumption $R_{\mathfrak X}$ satisfies the concordant shadowing property, hence diminishing $\mathcal V$ if necessary, one guarantees that there exists $0<\delta<\vep$ such that every $(\underline a,\delta)$-pseudo orbit for $R_{\mathfrak Y}$ is $(\underline a,\vep)$-shadowed by a point for $R_{\mathfrak X}$ (cf. Corollary~\ref{cor:shadowclose}).

Fix an arbitrary sequence $(\underline b,\underline a)\in  \hat \Sigma_{\mathfrak Y}$. For each $k\ge 1$ there exists $z_k\in Y_{-k}$ in such a way that 
$ 
y
=\pi_{\mathfrak Y}(\underline b)
= \lim_{k\to+\infty} g_{[\underline b]}^k(z_k).
$ 
Choose $k_0\ge 1$ such that $d(g_{[\underline b]}^k(z_k),y)<\delta$ for every $k\ge k_0$. Then the concatenated sequence  
    $$
    z_k, g_{[\underline b]}(z_k),  \dots, g_{[\underline b]}^{k-1}(z_k), y, g_{a_0}(y), 
    g_{a_{1}}(g_{a_0}(y)), \dots 
    $$
forms a $((b_{-k}, \dots , b_{-2}, b_{-1})\ast \underline a,\delta)$ pseudo-orbit 
for $R_{\mathfrak Y}$, hence 
$((b_{-k}, \dots , b_{-2}, b_{-1})\ast \underline a,\vep)$ shadowed by a point $w_k\in X_{b_{-k}}$.
Consider the point
\begin{equation*}
h_{\underline a}(y)=\lim_{k\to+\infty} f_{[\underline b]}^k(w_k) 
= 
\lim_{k\to+\infty} f_{\tilde{\underline a}}^k(w_k) \in X_{a_0}
\end{equation*}
where $\tilde{\underline a} = (b_{-k}, \dots, b_{-1})\ast \underline a$
(the point in $X_{a_0}$ is well defined by uniform contractivity of the local IFS $R_{\mathfrak X}$). 
By combinatorial stability ${\underline b}\in \Sigma_{\mathfrak X}^-$, hence
$h_{\underline a}(y)=\pi_{\mathfrak X}(\underline b) \in A_{\mathfrak X}$.
This defines a continuous 
map $h_{\underline a}: A_{\mathfrak Y} \to A_{\mathfrak X}$ which is $\vep$-$C^0$-close to the identity: if \(y=\pi_Y(b)\) and \(\tilde y=\pi_Y(\tilde b)\) have the same past symbols up to $-N$, then
$d(h_{\underline a}(y),h_{\underline a}(\tilde y))
\le \lambda^N\diam(X)$,
which proves continuity.
\color{black}
The fact that $h_{\underline a}$ is a bijection follows from the same property for both $\pi_{\mathfrak X}$ and $\pi_{\mathfrak Y}$, together with combinatorial stability. In fact, every $x\in A_{\mathfrak X}$ is of the form
\begin{equation}
\label{eq:uniquerepx}
x=\pi_{\mathfrak X} (\underline b), \; \text{for some (unique) } \underline b \in \Sigma^+_{\mathfrak X}    
\end{equation}
and, as $\underline b \in \Sigma^+_{\mathfrak Y}=\Sigma^+_{\mathfrak X}$ the point $y=\pi_{\mathfrak Y}(\underline b) \in A_{\mathfrak Y}$ satisfies $h_{\underline a}(y)=x$. This proves surjectivity while the unique representation in ~\eqref{eq:uniquerepx} implies the injectivity.

\smallskip
Finally, together with Lemma~\ref{le:past-2-future}, combinatorial stability implies that $\hat\Sigma_{\mathfrak Y}=\hat\Sigma_{\mathfrak X}$, and so $(\underline b,\underline a)\in  \hat \Sigma_{\mathfrak X}$, and the point $f_{\underline a}^k(x)$ is well defined for every $k\ge 1$.  The proof of the conjugacy equation
    $h_{\underline a}\circ g_{a_k}\circ\dots \circ g_{a_1} \circ g_{a_0}
= f_{a_k}\circ\dots  \circ f_{a_1} \circ f_{a_0} \circ h_{\underline a}$ for each $k\ge 1$ is identical to the one of Subsection~\ref{sec:top-stab-IFS}.
This proves that $R_{\mathfrak X}$ is topologically stable, and finishes the proof of the theorem. \hfill $\square$

%%%%%%%%%%%%%%%%%%%%%%%%%%%%%%%%%
\subsection{A characterization of topological stability }\label{sec:charact+topstability}

In this section we shall prove Theorem~\ref{thm:concordantshadowing-implies-stable} by showing that, in the framework of local IFSs on compact manifolds of dimension larger or equal than $3$, combinatorial and topological stability in $\mathcal F_0$ suffices to obtain the concordant shadowing property. 

\medskip
Let
  $(X,d)$ be a compact manifold of dimension  $\dim X\ge 3$ and that $X_j\subset X$ are proper sets in the sense that $X_j=\overline{\interior(X_j)}$ for every $1\le j\le n$.
Assume that $R_{\mathfrak X}= \left(X_j, f_j \right)_{1\le j \le n}$ is a combinatorial and topologically stable contractive local IFS. 
By topological stability in $\mathcal F_0$ there exists an open neighborhood $\mathcal V$ of $R_{\mathfrak X}$ in $\mathcal F_0$ so that for every $R_{\mathfrak Y} \in \mathcal V$ 
there exists a topological conjugacy between $R_{\mathfrak X}$ and $R_{\mathfrak Y}$ given by a homeomorphism 
$ \tau:\widehat\Sigma_{\mathfrak X}^+\to \widehat\Sigma_{\mathfrak Y}^+$
  and, for each $\underline a\in\widehat\Sigma_{\mathfrak X}^+$, a homeomorphism 
\[
   h_{\underline a}:A_{\mathfrak X}\to A_{\mathfrak Y}
\]
satisfying the conjugacy relation~\eqref{eq:conjtopconj}.

\medskip
In view of Lemma~\ref{le:finiteshadowequiv}, in order to prove the theorem it is enough to show that 
$R_{\mathfrak X}$  satisfies the finite concordant shadowing property.
Suppose, by contradiction, that this is not the case. Then there exists $\vep_0>0$ such that for every $\delta>0$  there exists 
$\underline a\in \Sigma^+$, $N\ge 1$ and a $(\underline a,\delta)$-pseudo-orbit $(x_k)_{k\ge 0}$ in $A_{\mathfrak X}$ 
so that $(x_k)_{k=0}^N$ is not $(\underline a,\vep_0)$-shadowed by any point in $A_{\mathfrak X}$. 
Consider a monotone decreasing sequence $(\delta_m)_{m\ge 1}$ tending to zero. For each $m\ge 1$ choose 
$\underline a^{(m)}=(a_k^{(m)})_{k\ge 0}\in \Sigma^+$, $N_m\ge 1$ and
a $(\underline a^{(m)},\delta_m)$-pseudo-orbit $(x_k^{(m)})_{k\ge 0}$ in $A_{\mathfrak X}$  so that $(x_k^{(m)})_{k=0}^{ N_m}$ is not $(\underline a^{(m)},\vep_0)$-shadowed.
This means that 
\begin{equation}
\label{eq:PO}
   d\bigl(f_{a_k^{(m)}}(x_k^{(m)}),x_{k+1}^{(m)}\bigr)<\delta_m \qquad \forall k\ge 0 
\end{equation}
while
\begin{equation}
\label{eq:PO2}
\sup_{0\le k\le N_m} d(f_{\underline a^{(m)}}^k(x),x_k) \ge \vep_0, \quad \text{for every } x\in A_{\mathfrak X}.
\end{equation}
By continuity of the maps and inequalities~\eqref{eq:PO} and ~\eqref{eq:PO2} it follows that $N_m$ tends to infinity as $m\to\infty$.
 Since $X$ is a manifold of dimension  $\dim X\ge 3$ and ~\eqref{eq:PO} defines an open condition we may suppose, without loss of generality, that the points in the orbits $(f^k_{\underline a^{(m)}}(x_k^{(m)}))_{1\le k\le N_m}$ do not coincide with any of the points in the pseudo-orbit
$(x_k^{(m)})_{k\ge 1}$ and that no points belong to the boundary of the sets $(X_j)_{1\le j \le n}$.
Hence there exist $C^1$-curves $\ell_{m,k} : [a_{m,k},b_{m,k}] \to X$ parameterized by arc length in such a way that
$$
\ell_{m,k}(a_{m,k}) = f_{a_k^{(m)}}(x_k^{(m)})
\quad
\text{and}
\quad 
\ell_{m,k}(b_{m,k}) = x_{k+1}^{(m)}
$$
for every $0\le k \le N_m-1$ and the images of the curves have length smaller than $\delta_m$ and do not intersect in $X$.

We now proceed to construct, for each $m\ge 1$, a local IFS
$
    R_{\mathfrak X}^{(m)} = (X_j,g^{(m)}_j)_{1\le j\le n}
$ 
which is $\delta_m$-close to $R_{\mathfrak X}$ (in the sense of ~\eqref{def:distanceS}) and for which $(x_k^{(m)})_{k=0}^{N_m}$ becomes an exact orbit, meaning that
\begin{equation}
    \label{eqgoal}
    g_{a_k^{(m)}}(x_k^{(m)}) = x_{k+1}^{(m)}
    \quad\text{for every $0\le k\le N_m$.}
\end{equation}

Now we will perform finitely many small perturbations (with disjoint supports) of the maps $(f_j)_{1\le j \le n}$ in order to obtain continuous maps $(g^{(m)}_j)_{1\le j \le n}$ that, while keeping the same domains,  satisfy $d_{C^0}(f_j, g^{(m)}_j)<\delta_m$ for each $1\le j \le n$ and ~\eqref{eqgoal} holds. 

This construction is recursive. 
Since the map $f_{a^{(m)}_0}$ is continuous and the length of $\ell_{m,0}$ is smaller than $\delta_m$, then one can obtain a 
$\delta_m$-$C^0$-small perturbation of $f_{a_0^{(m)}}$ of the form $h_{m,0 } \circ  f_{a_0^{(m)}}$ where $h_{m,0}$ is a $C^1$-diffeomorphism such that: (i) $d_{C^0}(h_{m,0},id)<\delta_m$, (ii) $S_0:=\{x\in X\colon h_{m,0}(x)\neq x\}$ is a small open neighborhood of $\ell_{m,0}([a_{m,0}, b_{m,0}])$ which does not intersect any of the curves 
$\ell_{m,k}$ with $0< k \le N_m-1$ and 
(iii) 
$ h_{m,0}(f_{a_0^{(m)}}(x_0^{(m)}))= x_{1}^{(m)}.
$ 
Define the local IFS $S^1_{\mathfrak X}=(X_j,f^1_j)_{1\le j \le n}$ where
$$
f^1_j= \begin{cases}
    \begin{array}{cl}
        h_{m,0 } \circ  f_{a_0^{(m)}}  & ,\text{if } j=a_0^{(m)}  \\
      f_j   &  ,\text{otherwise}.
    \end{array}
\end{cases}
$$
By construction, for each $1\le j \le n$ the map $f_j^1$ coincides with $f_j$ on open neighborhoods of the images of the compact disjoint compact curves $\ell_{m,k}$ with $0<k \le N_m$.
Hence, proceeding similarly
there exists a
$\delta_m$-$C^0$-small perturbation of $f_{a_1^{(m)}}$ of the form $h_{m,1 } \circ  f_{a_1^{(m)}}$ where $h_{m,1}$ is a $C^1$-diffeomorphism such that: (i) $d_{C^0}(h_{m,1},id)<\delta_m$, (ii) the support of the perturbation $S_1:=\{x\in X\colon h_{m,1}(x)\neq x\}$ is a small open neighborhood of $\ell_{m,1}([a_{m,1}, b_{m,1}])$ which does not intersect $S_0$ nor any of the curves 
$\ell_{m,k}$ with $2\le k \le N_{m}-1$
and (iii) 
$ h_{m,1}(f_{a_1^{(m)}}(x_1^{(m)}))= x_{2}^{(m)}.
$ 
Define the local IFS $S^2_{\mathfrak X}=(X_j,f^2_j)_{1\le j \le n}$ where
$$
f^2_j= \begin{cases}
    \begin{array}{cl}
        h_{m,1 } \circ  f_{a_1^{(m)}}  & ,\text{if } j=a_1^{(m)}  \\
      f^1_j   &  ,\text{otherwise}.
    \end{array}
\end{cases}
$$
By construction, for each $1\le j \le n$ the map $f_j^2$ coincides with $f_j$ on open neighborhoods of the images of the compact disjoint compact curves $\ell_{m,k}$ with $2 \le k \le N_m-1$.
Proceeding recursively
for $2 \le k \le N_m-1$,
we obtain a local IFS 
$$
S_{\mathfrak X}^{(N_m-1)} = (X_j,f^{N_m-1}_j)_{1\le j\le n}
$$ 
which, relabeling it as
$
    R_{\mathfrak X}^{(m)} = (X_j,g^{(m)}_j)_{1\le j\le n},
$ 
is $\delta_m$-close to $R_{\mathfrak X}$
and satisfies 
\begin{equation}
\label{eq:afterpert}
(g^{(m)}_{\underline a^{(m)}})^k(x_0^{(m)}) = x_k^{(m)}, \quad \text{for every $1\le k \le N_m$}.    
\end{equation}
By the topological stability of $R_{\mathfrak X}$ in $\mathcal F_0$, if $m\ge 1$ is large then there exists a topological conjugacy between $R_{\mathfrak X}$ and $R_{\mathfrak X}^{(m)}$:  for each admissible 
$ 
   \underline a^{(m)}=(a_0^{(m)},a_1^{(m)},\dots)\in \Sigma_{\mathfrak X}^+
$ 
there is a homeomorphism $h_{\underline a}:A_{\mathfrak X}\to A_{\mathfrak Y}$ such that
\[
   h_{\underline a}\circ
   f_{\underline a^{(m)}}^k
   \;=\;
   (g_{\underline a^{(m)}}^{(m)})^k \circ h_{\underline a}
   \quad\text{for all }k\ge 0.
\]
The latter, together ~\eqref{eq:afterpert}, implies that  
\[
  f_{a_k^{(m)}}\circ\cdots\circ f_{a_0^{(m)}}(y)
  = x_{k+1}^{(m)}
  \quad\text{for some }y\in A_{\mathfrak X},
\]
hence contradicting ~\eqref{eq:PO2}. This proves that $R_{\mathfrak X}$ satisfies the concordant shadowing property and finishes the proof of the theorem.
\hfill  $\square$

\section{Examples and applications}
\label{sec:examples}

The purpose of this section is to complement the abstract theory developed above with concrete examples and applications that illustrate both the strength and the limitations of our main results. We consider explicit classes of local iterated function systems arising from symbolic models and geometric constructions, for which the coding dynamics can be analyzed in a precise way. These examples serve not only to demonstrate the applicability of the general framework, and exhibit phenomena such as combinatorial instability and failure of shadowing, emphasizing the sharpness of the assumptions in the main results.

\subsection{Contractive local IFSs and fractal geometry}

Our first example shows that the hyperbolicity of the local IFS is not enough to guarantee the concordant shadowing property.

\begin{example} 
    \label{ex:shift2} 
    ({\bf A symbolic contractive local IFS with no concordant shadowing}) 
 Consider the metric space $X = \{0,1,2\}^{\mathbb{N}}$ endowed with the usual distance $d((x_n),(y_n))=2^{-N}$, where $N\ge 1$ is the least integer $n\ge 1$ so that $x_n\neq y_n$ (and $N=+\infty$ if such an integer does not exist). Define the following maps and their corresponding domains:
\[
\begin{aligned}
f_0(x) &:= (0, x_1, x_2, \ldots), & X_0 &:= \{0,1\}^{\mathbb{N}}, \\
f_1(x) &:= (1, x_1, x_2, \ldots), & X_1 &:= \{0,1\}^{\mathbb{N}}, \\
f_2(x) &:= (0,0, x_1+x_2, x_2+x_3, \ldots), & X_2 &:= X_0 \cup X_1 = \{0,1\}^{\mathbb{N}}.
\end{aligned}
\]
It is simple to check that $\operatorname{Lip}(f_j) \le 1/2$  for every $ 0\le j \le 2$, hence $R_{\mathfrak X} = (X_j, f_j)_{0\le j \le 2}$ is a contractive local IFS. 
Moreover, it is known that 
$A_{\mathfrak X}=\{0,1\}^{\mathbb{N}} \cup Q$ where $Q := f_2(\{0,1\}^{\mathbb{N}})$
(see \cite{OV25a}).

We claim that $R_{\mathfrak X}$ does not satisfy the concordant shadowing property. Given $\delta>0$ arbitrary let $N=N(\delta)=\lfloor -\ln \delta \rfloor +1$. Then, any two sequences in $X$ that coincide in the first $N$ coordinates are within distance $\delta$. Consider the $\delta$-pseudo orbit in $X$ given by
\[
\begin{aligned}
& \underline x^0 := (0,1, 0, 0, \dots,\boxed{0}, 0,0, \ldots) \\
& \underline x^1  :=f_2(\underline x^0)= (0,0,1,1, 0, \dots,\boxed{0}, 0,0, \ldots) \\
& \underline  x^i :=f^i_0(\underline x^1)\qquad {\forall 1\le i \le N-3} \\
& \underline  x^{N-2} :=f_0(\underline x^{N-3})
= (0,0,0,0, 0, \dots,\boxed{0}, 1,1, 0, 0, \ldots) \\
& \underline  x^{N-1} 
= (0,0,0,0, 0, \dots,\boxed{0}, 0,0, 0, 0, \ldots) \\
& \underline  x^{n} 
= f_2^{n-N+1}(0,0,0,0, 0, \dots,\boxed{0}, 0,0, 0, 0, \ldots) \qquad {\forall n\ge  N}
\end{aligned}
\]
where the box marks the $N$th position of the sequence (note that $d(f_0(\underline x^{N-2}), \underline x^{N-1})<\delta$ while all other elements in the sequence are obtained by iteration). Such a pseudo-orbit is not shadowable by a true orbit. Indeed, defining  
$\underline a= (2,0, 0, \dots, 0, \boxed{2},2,2,\dots )\in \Sigma^+$, if there would exist $\vep>0$ and 
$\underline x\in X$ so that $d(f_{a_n}\circ f_{a_{n-1}}\circ \dots \circ f_{a_2}\circ f_{a_1}(\underline x),\underline x^k)<\vep$. The only point that can be iterated by $f_2\circ f_2$ is $\underline 0$ which is not close to $\underline x^0$.
This proves the claim. 

It follows from \cite{OV25a} that the code space $\Sigma_{\mathfrak X}^-$ is not a subshift of finite type. In particular, by Corollary~\ref{thm:negativeshadowing-classification}, the local IFS $R_{\mathfrak X}$ does not satisfy the negative shadowing property.
\end{example}

We recall that Theorem~\ref{thm:usc} ensures that local attractors are upper-semicontinuous as a function of the domains of local IFSs.
However, continuity on the Vietoris topology seems unlikely in general. A more natural question concerns the regularity of the Hausdorff dimension of the local attractors for special parameterized families of local IFSs.

  \begin{example}\label{ex: Barnsley Superfractals}  
Fix $0\le \beta \le 1$. Consider $X \subset [0,1]^2$ the equilateral triangle (see Figure~\ref{fig:sierp_barns} top-left) and the local IFS $R_{\mathfrak{X}}=(X_i, f_i)_{1\le i \le 3}$ where
	\[\begin{cases}
		f_1(x,y)= 1/2(x,y)\\
		f_2(x,y)= 1/2(x+0,y+1)\\
		f_3(x,y)= 1/2(x+1/2,y+\sqrt{3}/2)
	\end{cases}
	\]
	and the restrictions are $\mathfrak{X}=(X_j)_{1\le j \le 3}$ given by 
	$X_1=X_2=X$ and $X_3=X^\beta_3:=\{(x,y) \in X\;\colon \; y \le \beta\}$.
For $\beta=1$ we recover the classical Sierpinski triangle, whose code-space is $\{1,2,3\}^{\mathbb{N}}$, while for $\beta=0$ the local attractor is the line segment $[0,1] \times \{0\}$. In general, the local attractor $A^\beta_{\mathfrak X}$ will be a strictly smaller set (see Figure~\ref{fig:sierp_barns} for a numerical simulation when $\beta=0.3$).  This happens because different orbits may disappear (to become endpoints) in arbitrary times of the iteration.  Indeed, given a point $(x,y)$ if we iterate many times by $f_3$ we will reach a point $(x',y')$ such that $y'> \beta$ so we cannot apply $f_3$. However such number of iterates depends on how close the point is from the line segment $[0,1] \times\{0\}$. On the other hand, intercalating $f_3$ with long periods of only $f_1$ and $f_2$ we can always apply $f_3$, since these two maps will necessarily decrease the $y$ coordinate under $\beta$.    \color{black} 
    \begin{figure}[H]
        \centering
        \includegraphics[width=0.30\linewidth]{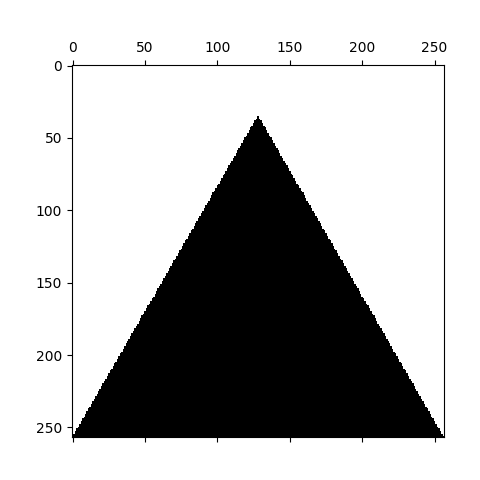}
        \includegraphics[width=0.30\linewidth]{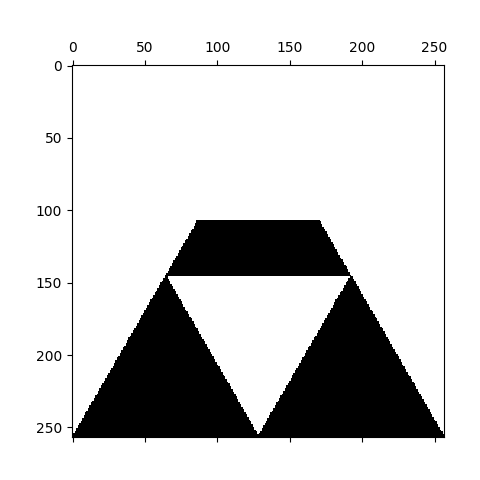}\\
        \includegraphics[width=0.30\linewidth]{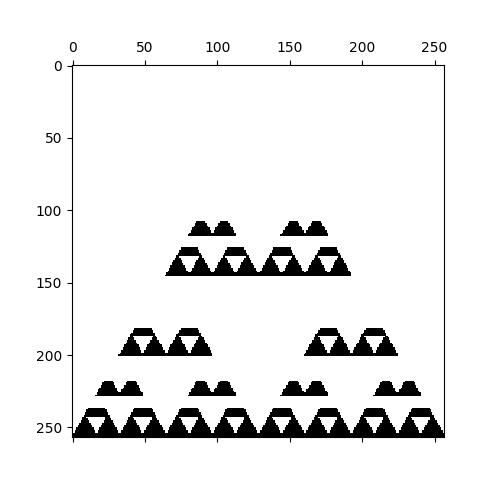}
        \includegraphics[width=0.30\linewidth]{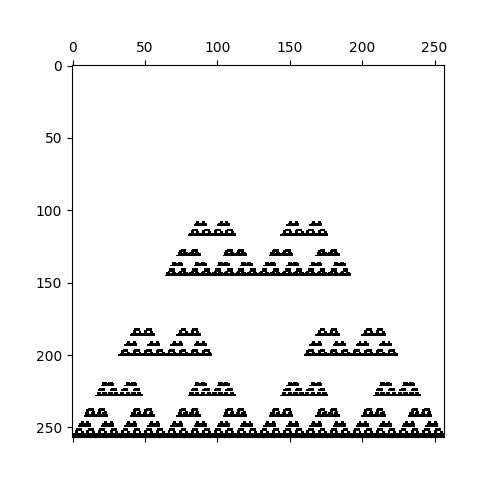}
        \caption{Iterations $F_{\mathfrak X}^k(X)$ for $k=0,1,4,9$ and $\beta=0.3$.}
        \label{fig:sierp_barns}
    \end{figure}

\end{example}

Although Theorem~\ref{thm:usc} ensures upper semicontinuity of the local attractor, the following geometric example highlights the subtle bifurcation mechanism through which lower semicontinuity may fail. More precisely, we construct an increasing family of restrictions $\mathfrak{X}^t$ whose local attractor $A_{\mathfrak{X}}^{t}$ undergoes transitions near $t=\tfrac{1}{4}$ and $t=\tfrac{3}{4}$, providing a clear geometric description of the phenomenon.
\color{black}

\begin{example}\label{examp: non-semicont} 
	Consider $X=[0,1]^2$ and the IFS $R=(X, f_1, f_2,f_3,f_4)$ where
	\[\begin{cases}
		f_1(x,y)= 1/4(x+1,y+1)\\
		f_2(x,y)= 1/4(x+0,y+1)\\
		f_3(x,y)= 1/4(x+1,y+0)\\
		f_4(x,y)= (1-c)(x,y) + c (3/4,3/4),  3/4<c<1, c=0.8
	\end{cases}
	\]
	and the parameterized family  $\mathfrak{X}^t=(X_1^t,X_2^t,X_3^t,X_4^t)$, for
	$$X_1^t=X_2^t=X_3^t={[0, 0.5]}
    \quad
    \text{and} \quad
    X_4^t=[1-t,1]\times[1-t,1]
    $$
    for each $t \in [1/8,1].$ 
	A simple inspection shows that if $t<s$ then $X_4^t \subset X_4^s$ (the first ones are constant), and for $|t-s|<\delta$ one has $\text{dist}_H^{\max} (\mathfrak{X}^t,\mathfrak{X}^s) < \sqrt{2}{\delta}$.
	It is not hard to check  that 
    \[A_{\mathfrak X}^{t}=
    \begin{cases}
		\text{a Sierpinski gasket for } 1/8 \leq t <1/4 \text{ (see Figure~\ref{fig:sierp_usc} top-left)}\\
		\text{the union of a Sierpinski gasket with the point $(3/4,3/4)$ for } 1/4 \leq t <3/4 \\\text{(see Figure~\ref{fig:sierp_usc} top-right)}\\
	      \text{the union of a Sierpinski gasket with a fractal set close to the point $(3/4,3/4)$ } \\ \text{for } 3/4 \leq t <1 \text{ (see Figure~\ref{fig:sierp_usc} bottom-left)}\\
          \text{the union of a Sierpinski gasket with renormalized copy of the Sierpinski gasket} \\ \text{close to the point $(3/4,3/4)$ for $t=1$}
 \text{(see Figure~\ref{fig:sierp_usc} bottom-right)}\\
	\end{cases}
	\]
    \begin{figure}[htb]
        \centering
    \includegraphics[width=0.42\linewidth]{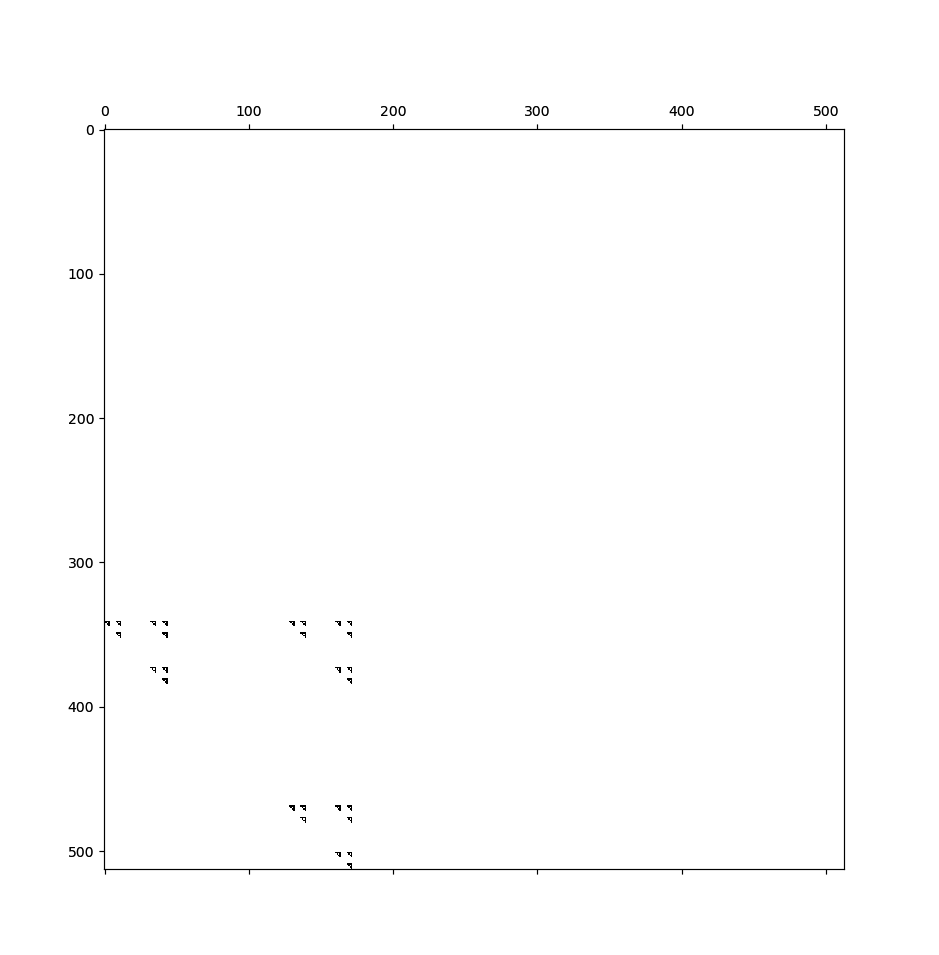}
        \includegraphics[width=0.40\linewidth]{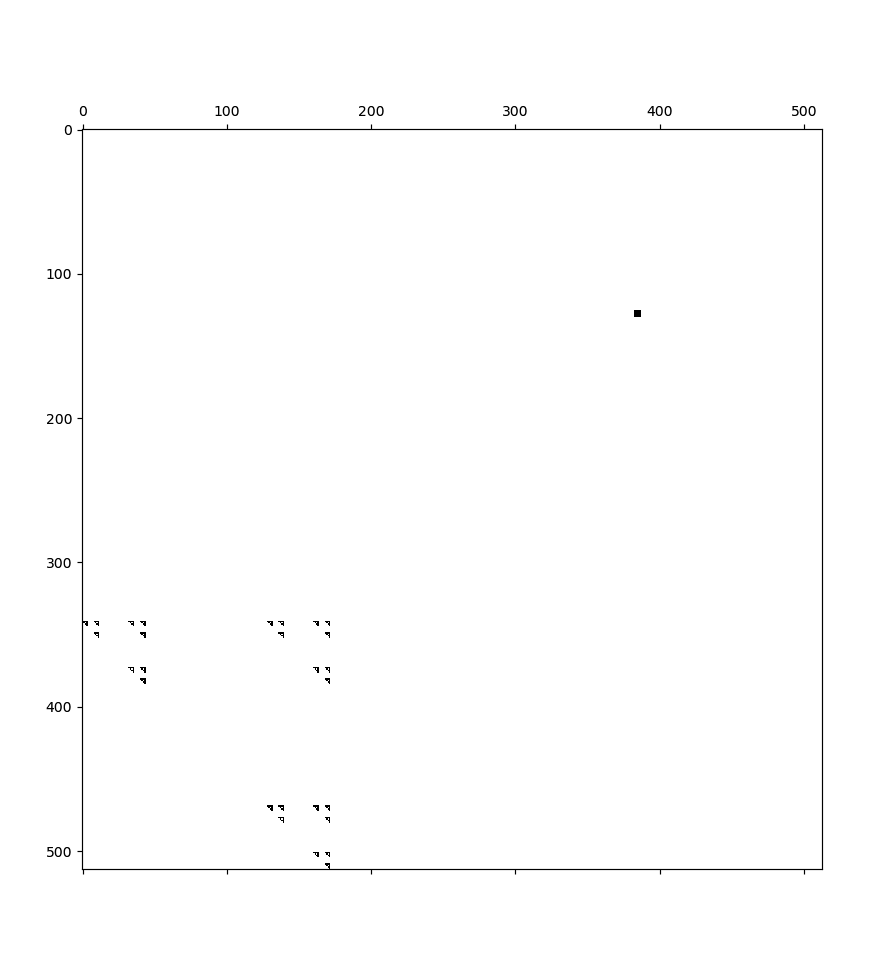}\\
\includegraphics[width=0.40\linewidth]{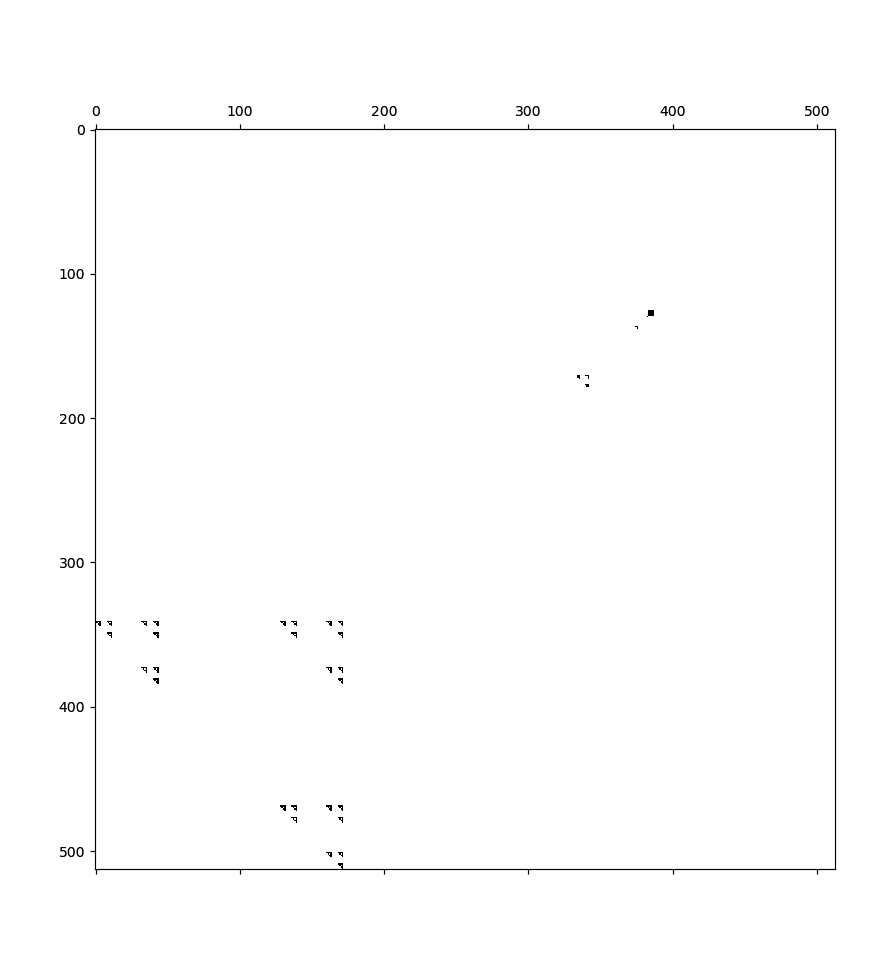}     \includegraphics[width=0.40\linewidth]{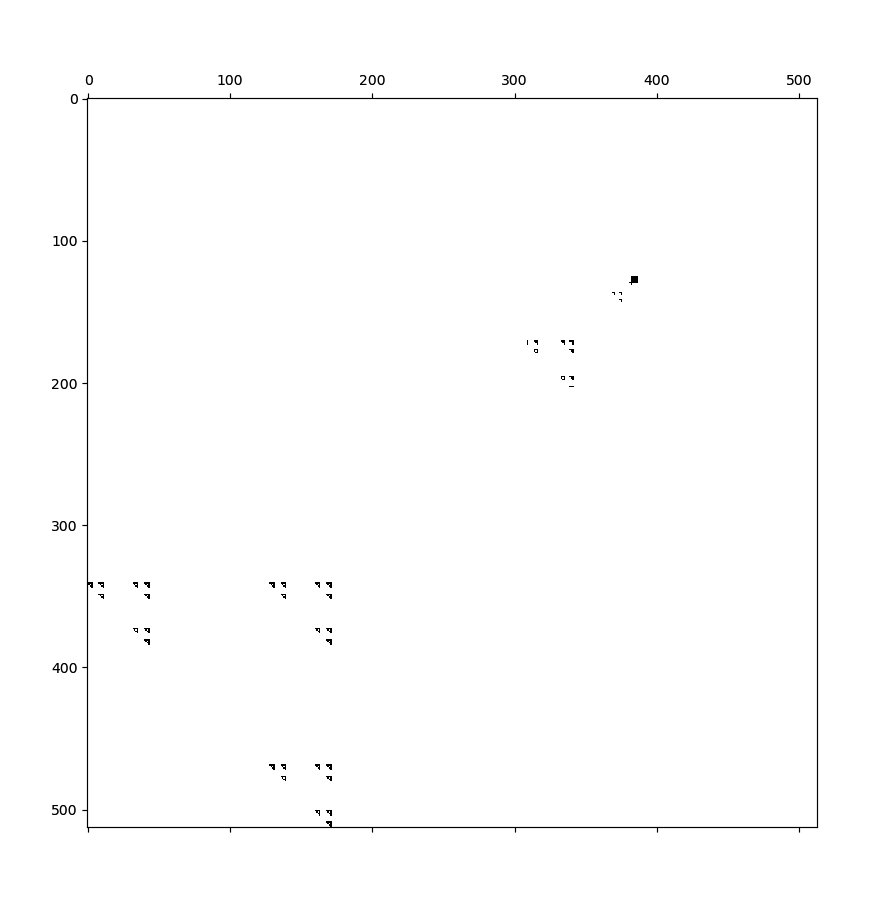}
        \caption{Family of local attractors $A_{\mathfrak X}^{t}$.  The last one is a sequence of scaled copies of the original Sierpinski gasket.}
        \label{fig:sierp_usc}
    \end{figure}
    The attractor in this example does not vary lower semi-continuously (see the jump of one point at $t=1/4$ or or in $t=3/4$). 
\end{example}

\subsection{Pseudogroup actions}\label{subsec:pseudo}

We conclude this section with applications of our results to the framework of group, semigroup and pseudogroup actions.
It has been proved that expansive actions of finitely generated (semi)groups satisfying the shadowing property are topologically stable \cite{CCP25,CL18}. 
Let us recall some concepts.

\begin{definition}
    \label{def:shadow-groups}
Let $G=\langle G_1\rangle$ be a finitely generated (semi)group acting continuously on a compact metric space $(X,d)$ by $\phi: G\times X \to X$. Given $\delta>0$, a family $(x_g)_{g\in G}\subset X$ is called a $\delta$--pseudo-orbit of the action if
\[
d\bigl(\phi_h(x_g),x_{gh}\bigr)<\delta
\quad\text{for all } g\in G \text{ and } h\in G_1.
\]
The action $\phi$ satisfies the \emph{shadowing property} if for every $\varepsilon>0$ there exists $\delta>0$ such that for any $\delta$--pseudo-orbit $(x_g)_{g\in G}$ there exists $x\in X$ satisfying
\[
d\bigl(\phi_g(x),x_g\bigr)<\varepsilon
\quad\text{for all } g\in G.
\]
\end{definition}

The previous notion requires pseudo-orbits to remain close by the action of a set of finite generators for the group, and turns out to be quite rigid. More precisely, few group actions are known to have the shadowing property, namely: (i) 
equicontinuous actions of a finitely generated group on the Cantor set (ii) subshifts of finite type in $\{1,2,\dots,n\}^G$ where $G$ is a finitely generated group (cf. \cite{CL18}).
Furthermore, results by Osipov and Tikhomirov \cite{OT16}, show that the shadowing property for group actions is highly sensitive to the algebraic structure of the acting group, and not merely to the dynamics of individual group elements: there exist $\mathbb{Z}^2$-actions for which each generator has the shadowing property, while the action itself fails to have shadowing.

\smallskip In view of this rigidity, weaker notions of shadowing have been introduced in \cite{Xu24}, inspired by non-autonomous dynamical systems and by the concept of orbital specification developed in \cite{RV16}. Let us recall the concept of orbital shadowing for free semigroup actions.

\begin{definition}
\label{eq:defMa}
Let $G_1=\{g_1, g_2,\dots, g_m\}$ be a collection of continuous maps on a compact metric space $X$ and let $G = \langle G_1 \rangle$ be the free semigroup generated by $G_1$.   
A sequence $(x_n)_{n \geq 0} \subset X$ is called a \emph{$\delta$-pseudo-orbit} if for every $n \geq 0$ there exists $a_n\in\{1,2, \dots, m\}$  such that
\begin{equation}\label{eq:pseudofree}
d\bigl(g_{a_n}(x_n), x_{n+1}\bigr) < \delta
\quad \text{for every } n \geq 0.
\end{equation}
The action of $G$ on $X$ has the (orbital) \emph{shadowing property} if for every $\varepsilon > 0$ there exists $\delta > 0$ such that for any $\delta$-pseudo-orbit $(x_n)_{n \geq 0}$ of the form ~\eqref{eq:pseudofree} there exist a point $x \in X$ satisfying
\[
d\bigl(g_{a_{n+1}} \circ \cdots \circ g_{a_1} \circ g_{a_0}(x), x_n\bigr) < \varepsilon
\quad \text{for all } n \geq 0.
\]
\end{definition}

The notion of orbital shadowing for free semigroup actions corresponds precisely to the concept of concordant shadowing for IFSs. Indeed, a pseudo-orbit of the form \eqref{eq:pseudofree} is exactly an $(\underline a,\delta)$-pseudo orbit for the IFS $R=(X,g_i)_{1\le i \le m}$, with $\underline a=(a_{n})_{n\ge 0}$.
Such a parallel can be formulated in the more general setting of finitely generated pseudogroup actions, which we now introduce.

\begin{definition}
\label{def:pseudogroup}
Let $(X,d)$ be a compact metric space.
A \emph{pseudogroup} $G$ acting on $X$ is a collection of homeomorphisms
$ 
g:U_g \longrightarrow V_g$ on
open sets $U_g,V_g\subset X$
such that:
\begin{enumerate}
    \item $\mathrm{id}_X\in G$ (or, equivalently, $\bigcup \{ U_g\colon g\in G\}=X$)
    \item if $g\in G$, then $g^{-1}\in G$;
    \item if $g,f \in G$ then $g \circ f : f^{-1}(V_f \cap U_g)\to g(V_f \cap U_g)$ belongs to $G$;
    \item if $g\in G$ and $U\subset U_g$ is open, then
    $g|_U\in G$.
\end{enumerate}
\end{definition}

\begin{definition}
Let $G$ be a finitely generated free pseudogroup acting on a compact metric space $(X,d)$, generated by $G_1=\{g_1,g_2,\dots,g_m\}$
where each
$g_i:U_i\to V_i$
is a homeomorphism between open subsets of $X$. A sequence $(x_n)_{n\geq 0}\subset X$ is called a \emph{$\delta$-pseudo-orbit} of the pseudogroup action if for every $n\geq 0$ there exists
$a_n\in\{1,2,\dots,m\}$
such that
\[
x_n\in U_{a_n}
\quad\text{and}\quad
d\bigl(g_{a_n}(x_n),x_{n+1}\bigr)<\delta.
\]
We say that the pseudogroup action has the \emph{orbital shadowing property} if for every $\varepsilon>0$ there exists $\delta>0$ such that every $\delta$-pseudo-orbit
$(x_n)_{n\geq 0}$
is $\varepsilon$-shadowed by a true orbit.
\end{definition}

To the best of our knowledge, the corresponding problem of relating the shadowing property and stability for pseudogroup actions has not yet been addressed.
Associated to a finitely generated pseudogroup action as above, one can consider the local IFS 
\begin{equation}
    \label{eq:IFSpseudo}
    R_{G}=(\overline{U_{g_i}},g_i)_{1\le i \le m}.
\end{equation}
In this way, the orbital shadowing property of the pseudogroup action is naturally related to the concordant shadowing property of the corresponding local IFS. As a direct consequence of Theorems~\ref{thm:shadowing-usc}--\ref{thm:concordantshadowing-implies-stable}, one obtains the following immediate consequence:

\begin{maincorollary}
\label{cor:J}
Let 
$G=\langle G_1\rangle$ be a finitely generated free pseudogroup acting continuously on a compact metric space $(X,d)$. Assume that
(i) each generator $g_i$ is contractive for every $1\leq i\leq m$, (ii) the closure of the sets $(g_i(U_{g_i}))_{1\le i \le m}$ are pairwise disjoint, and (iii) the pseudogroup action has the robust orbital shadowing property.
Then there exists an open neighborhood $\mathcal V$ of the local IFS $R_G$ given by ~\eqref{eq:IFSpseudo}  such that,
for every pseudogroup action
$\widetilde G=\langle \widetilde g_1,\dots,\widetilde g_m\rangle$
such that $R_{\widetilde G} \in\mathcal V$
the following holds:
\begin{enumerate}
    \item the composition $g_{a_n}\circ \dots \circ g_{a_1}\circ g_{a_0}$ is admissible if and only if  the same occurs for $\tilde g_{a_n}\circ \dots \circ \tilde g_{a_1}\circ \tilde g_{a_0}$;
        \item if the generators are contractive, then every admissible pseudo-orbit
    admits a unique shadowing point;
        \item the corresponding local attractors are topologically conjugate, namely,
    the associated local IFSs
    $R_{\widetilde G}$ and $ R_G$
    are topologically stable.
\end{enumerate}
\end{maincorollary}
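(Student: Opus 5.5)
The plan is to show that $R_G$ satisfies all the hypotheses of the earlier results and then apply Corollary~\ref{thm:robust-concordantshadowing}, Lemma~\ref{le:past-2-future} and Theorem~\ref{thm:combinatorial-topological} in turn.

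\textbf{Step 1: translating the hypotheses.} Hypothesis (i) says that $R_G=(\overline{U_{g_i}},g_i)_{1\le i\le m}$ lies in $\mathcal F_C$. Each $g_i$ is uniformly contractive, hence Lipschitz, so it extends continuously to the closure $\overline{U_{g_i}}$ with the same contraction rate. Hypothesis (ii) says that the compact sets $\overline{g_i(U_{g_i})}=g_i(\overline{U_{g_i}})$ are pairwise disjoint. This is a strong form of the open set condition: under it every point of $A_G$ has a unique code, so $\pi_G$ is a homeomorphism. Since disjointness of finitely many compact images is an open condition in the metric $D$, it also persists on a neighborhood of $R_G$, as in the proof of Proposition~\ref{thm:criterion-combinatorial-stability}. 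Next, a $\delta$-pseudo-orbit of the pseudogroup action with itinerary $(a_n)$ is the same thing as an $(\underline a,\delta)$-pseudo-orbit of $R_G$, up to replacing $U_{g_i}$ by its closure. So hypothesis (iii) is the robust concordant shadowing property of $R_G$.

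\textbf{Step 2: item (1).} Corollary~\ref{thm:robust-concordantshadowing} gives a neighborhood $\mathcal V$ on which $\Sigma^-_{\widetilde G}=\Sigma^-_G$. Lemma~\ref{le:past-2-future} then gives $\widehat\Sigma_{\widetilde G}=\widehat\Sigma_G$, and hence equality of the future projections $\widehat\Sigma^+$; Corollary~\ref{cor:rob-shadow+} gives that equality directly. The admissible compositions $g_{a_n}\circ\dots\circ g_{a_0}$ on the attractor are exactly the finite words occurring in these shift spaces, so item (1) follows.

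\textbf{Step 3: item (2).} Fix an admissible $(\underline a,\delta)$-pseudo-orbit. Existence of an $(\underline a,\vep)$-shadowing point is concordant shadowing. For uniqueness I would argue in two halves:
\begin{itemize}
\item \emph{Uniqueness of the past code.} Shadowing points lie in $A_G$, where $\pi_G$ is injective by the disjointness in (ii).
\item \emph{Uniqueness along the forward itinerary.} If $x,x'$ both shadow, Lemma~\ref{lem:L1} gives $d(g^k_{\underline a}(x),g^k_{\underline a}(x'))\le\lambda^k d(x,x')$.
\end{itemize}
This second estimate does not give uniqueness by itself. The genuine point is that $\pi_G^{-1}$ is uniformly continuous, so shadowing points that are close determine the same past symbols. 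I expect this step to be the main obstacle, and the route I would try first is uniqueness modulo the coding, read as uniqueness of the shadowing code; precisely what "unique" should mean here is the delicate part of the statement.

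\textbf{Step 4: item (3).} Robust concordant shadowing gives combinatorial stability (Step 2), $R_G$ is contractive, and the OSC holds (Step 1). Theorem~\ref{thm:combinatorial-topological} therefore applies, shrinking $\mathcal V$ if necessary so that it lies inside the neighborhood produced there. This gives a conjugacy between $R_{\widetilde G}$ and $R_G$, with $\tau=\mathrm{id}$ and homeomorphisms $h_{\underline a}$ built from the codes.
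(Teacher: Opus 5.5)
\textbf{Items (1) and (3).} Here your plan is the paper's. The paper's proof is only the remark that the corollary follows from the main theorems: robust concordant shadowing gives combinatorial stability (Corollary~\ref{thm:robust-concordantshadowing}, then Lemma~\ref{le:past-2-future}). Together with contraction and the OSC supplied by (ii), Theorem~\ref{thm:combinatorial-topological} then gives the conjugacies. Two caveats apply, and the paper shares both. First, your Step~1 dictionary between orbital and concordant shadowing is only as precise as the paper's ``naturally related'': one side has pseudo-orbits in the open domains with shadows in $X$, the other has pseudo-orbits and shadows in $A_G$ with domains $\overline{U_{g_i}}$. Second, Theorem~\ref{thm:combinatorial-topological} only covers perturbations in $\mathcal F_C$, and its proof actually uses concordant shadowing of the unperturbed system; here (iii) provides that.

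\textbf{Item (2): a genuine gap that cannot be closed.} Neither of your two halves bears on uniqueness. Injectivity of $\pi_G$ says a single point has a single code. Uniform continuity of $\pi_G^{-1}$ only says that two nearby shadows share a long past word. Since $\pi_G$ is a bijection, ``uniqueness of the shadowing code'' is the same statement as uniqueness of the shadowing point; if it instead means the itinerary, that is fixed by definition.

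That statement is in fact false. Suppose $x\in A_G\cap I(\underline a)$ shadows with $\sup_k d(g^k_{\underline a}(x),x_k)=\vep-\eta$. By contraction, every $x'\in A_G\cap I(\underline a)$ with $d(x,x')<\eta$ shadows too. This is exactly the paper's own remark, after the definition of topological stability, that in the contractive setting the shadowing point is generally not unique.

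Concretely, take $X=[-1,2]$, $U_{g_1}=U_{g_2}=(-\frac12,\frac32)$, $g_1(x)=x/4$ and $g_2(x)=x/4+\frac34$.
\begin{itemize}
\item Hypotheses (i) and (ii) hold.
\item Any $D$-small perturbation has interval domains close to $[-\frac12,\frac32]$, maps each domain into every domain, and is $C^0$-close to the $g_i$.
\item Hence $e_k=d(\widetilde g^k_{\underline a}(x_0),x_k)$ satisfies $e_{k+1}\le e_k/4+\delta+C\eta$, where $\eta$ is the $D$-size of the perturbation. So every pseudo-orbit is shadowed by its own initial point, uniformly, and $R_G$ has robust concordant shadowing.
\item Yet the fixed orbit $x_k\equiv 0$ of $g_1$ is $((1,1,\dots),\vep)$-shadowed by every point $3\cdot 4^{-j}<\vep$ of the Cantor attractor.
\end{itemize}
The paper offers no argument for (2) either: none of the results it invokes contains a uniqueness assertion.

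\textbf{What can be said instead.} One option is the trivial asymptotic statement $d(g^k_{\underline a}(x),g^k_{\underline a}(x'))\le\lambda^k\diam(X)$ for any two shadows. The other is a uniqueness statement in the backward direction. There the relevant map $T$ of \eqref{eq:defT} is conjugate to the positively expansive shift $\sigma^-|_{\Sigma^-_G}$. So two points whose $T$-orbits, from time $0$, stay within the expansivity constant must coincide. Item (2) would have to be replaced by one of these.
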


\begin{remark}
The previous corollary shows that robust orbital shadowing guarantees the persistence of the admissible symbolic structure and the topological stability of the pseudogroup action under small perturbations of both the maps and domains. This is particularly relevant in the setting of good pseudogroups, where the domains are slightly reduced so that their closures become compact (we refer the reader to \cite{Wal04} for the definition of good pseudogroups).      
\end{remark}

\bigskip
\subsection*{Acknowledgments} This work was initiated during a one-year visit of ERO to University 
of Aveiro. 
ERO was partially supported by MATH-AMSUD under the project GSA/CAPES, Grant 88881.694479/2022-01, and by CNPq Grant 408180/2023-4.
PV was partially supported by CIDMA under the Portuguese Foundation for Science and Technology  (FCT, https://ror.org/00snfqn58)  Multi-Annual Financing Program for R\&D Units
 https://doi.org/10.54499/UID/04106/2025, and by Agence Nationale de la Recherche (ANR) under the ANR 2020 funding programme, project THERMOGAMAS.

%%%%%%%%%%%%%%%%%%%%%%%%%%%%%%%%

%%%%%%%%%%%%%%%%%%%%%%%%%%%%

\end{document}